\newtheorem{theorem}{Theorem}[section]
\newtheorem{lemma}[theorem]{Lemma}
\newtheorem{proposition}[theorem]{Proposition}
\newtheorem{corollary}[theorem]{Corollary}
\newtheorem{fact}[theorem]{Fact}
\newtheorem{remark}[theorem]{Remark}
\numberwithin{equation}{section}
\title{The Fundamental Subspaces of \\Ensemble Kalman Inversion}
\author{Elizabeth Qian\thanks{School of Aerospace Engineering, School of Computational Science and Engineering, Georgia Institute of Technology, Atlanta, GA 
  (\texttt{eqian@gatech.edu}, \href{https://www.elizabethqian.com}{\texttt{https://www.elizabethqian.com}}).}
\and Christopher Beattie\thanks{Department of Mathematics, Virginia Polytechnic Institute and State University 
  (\texttt{beattie@vt.edu})}}
\DeclareMathOperator{\diag}{diag}
\newcommand{\bbM}{\mathbf{M}}
\newcommand{\bbN}{\mathbf{N}}
\newcommand{\bbP}{\mathbf{P}}
\newcommand{\bbQ}{\mathbf{Q}}
\newcommand{\R}{\mathbb{R}}
\newcommand{\E}{\mathbb{E}}
\newcommand{\cov}{\mathbb{C}\text{ov}}
\newcommand{\sfE}{\textsf{E}}
\newcommand{\sfCov}{\textsf{cov}}
\newcommand{\rank}{\textsf{rank}}
\newcommand{\Ran}{\textsf{Ran}}
\newcommand{\Ker}{\textsf{Ker}}
\newcommand{\bGamma}{\boldsymbol{\Gamma}}
\newcommand{\bDelta}{\boldsymbol{\Delta}}
\newcommand{\bSigma}{\boldsymbol{\Sigma}}
\newcommand{\bcalM}{\boldsymbol{\mathcal{M}}}
\newcommand{\beps}{\boldsymbol{\varepsilon}}
\newcommand{\btheta}{\boldsymbol{\theta}}
\newcommand{\bomega}{\boldsymbol{\omega}}
\newcommand{\bcP}{\boldsymbol{\mathcal{P}}}
\newcommand{\bcQ}{\boldsymbol{\mathcal{Q}}}
\newcommand{\bcN}{\boldsymbol{\mathcal{N}}}
\newcommand{\cN}{\mathcal{N}}
\newcommand{\bH}{\mathbf{H}}
\newcommand{\bI}{\mathbf{I}}
\newcommand{\bC}{\mathbf{C}}
\newcommand{\tbB}{\tilde{\mathbf{B}}}
\newcommand{\tbC}{\tilde{\mathbf{C}}}
\newcommand{\bD}{\mathbf{D}}
\newcommand{\bV}{\mathbf{V}}
\newcommand{\bU}{\mathbf{U}}
\newcommand{\bW}{\mathbf{W}}
\newcommand{\bK}{\mathbf{K}}
\newcommand{\by}{\mathbf{y}}
\newcommand{\bw}{\mathbf{w}}
\newcommand{\bv}{\mathbf{v}}
\newcommand{\bu}{\mathbf{u}}
\newcommand{\bh}{\mathbf{h}}
\newcommand{\ba}{\mathbf{a}}
\newcommand{\bb}{\mathbf{b}}
\newcommand{\br}{\mathbf{r}}
\newcommand{\fisher}{\bH^\top\bGamma^{-1}\bH}
\def\mathcenterto#1#2{\mathclap{\phantom{#1}\mathclap{#2}}\phantom{#1}}
\let\old@widetilde\widetilde
\def\widetildeto#1#2{\mathcenterto{#2}{\old@widetilde{\mathcenterto{#1}{#2\,}}}}
\let\old@widehat\widehat
\def\widehatto#1#2{\mathcenterto{#2}{\old@widehat{\mathcenterto{#1}{#2\,}}}}
\begin{document}

\maketitle

\begin{abstract}
Ensemble Kalman Inversion (EKI) methods are a family of iterative methods for solving weighted least-squares problems, especially those arising in scientific and engineering inverse problems in which unknown parameters or states are estimated from observed data by minimizing the weighted square norm of the data misfit. Implementation of EKI requires only evaluation of the forward model mapping the unknown to the data, and does not require derivatives or adjoints of the forward model. The methods therefore offer an attractive alternative to gradient-based optimization approaches in inverse problem settings where evaluating derivatives or adjoints of the forward model is computationally intractable. This work presents a new analysis of the behavior of both deterministic and stochastic versions of basic EKI for linear observation operators, resulting in a natural interpretation of EKI's convergence properties in terms of ``fundamental subspaces” analogous to Strang's fundamental subspaces of linear algebra. Our analysis directly examines the discrete EKI iterations instead of their continuous-time limits considered in previous analyses, and provides spectral decompositions that define six fundamental subspaces of EKI spanning both observation and state spaces. This approach verifies convergence rates previously derived for continuous-time limits, and yields new results describing both deterministic and stochastic EKI convergence behavior with respect to the standard minimum-norm weighted least squares solution in terms of the fundamental subspaces. Numerical experiments illustrate our theoretical results.
\end{abstract}

\section{Introduction}
Ensemble Kalman Inversion (EKI) methods describe a family of iterative methods for solving weighted least-squares problems, particularly those arising in \textit{inverse problems}:
 let $\bH:\R^d\to\R^n$ be a \textit{forward operator} mapping $\bv\in\R^d$, representing an unknown parameter or state of a system of interest, to observations $\by\in\R^n$, and let $\bGamma\in\R^{n\times n}$ be a symmetric positive definite weight matrix. Given observed data $\by$, we seek to infer the unknown state $\bv$ by solving:
\begin{align}\label{eq: least squares problem}
    \min_{\bv\in\R^d} (\by - \bH(\bv))^\top \bGamma^{-1} (\by - \bH(\bv)) = \min_{\bv\in\R^d} \|\by - \bH(\bv)\|_{\bGamma^{-1}}^2.
\end{align}
Inverse problems arise in many disciplines across science, engineering, and medicine, including earth, atmospheric, and ocean modeling~\cite{iglesias2014well,majumder2021freshwater,nguyen2012spatial,panteleev2015adjoint,twomey2013introduction}, medical imaging~\cite{chung2010numerical,epstein2007introduction,scherzer2006mathematical}, robotics and autonomy~\cite{palanthandalam2008wind}, and more. In large-scale scientific and engineering applications, solving~\eqref{eq: least squares problem} using standard gradient-based optimization methods may not be feasible because derivatives or adjoints of the forward operator $\bH$ are unavailable or prohibitively expensive to compute.
In contrast, EKI methods can be implemented in an \textit{adjoint-/derivative-free} way, making EKI an attractive alternative to gradient-based methods for solving~\eqref{eq: least squares problem}. Historically, EKI methods have been developed in the contexts of reservoir simulation~\cite{chen2012EnsembleRandomizedMaximum,emerick2013EnsembleSmootherMultiple,emerick2013InvestigationSamplingPerformance,evensen2018AnalysisIterativeEnsemble,gu2007IterativeEnsembleKalman,li2009IterativeEnsembleKalman} and weather and climate modeling~\cite{bocquet2020BayesianInferenceChaotic,cleary2021CalibrateEmulateSample,dunbar2021CalibrationUncertaintyQuantification,gottwald2021SupervisedLearningNoisy,pulido2018StochasticParameterizationIdentification,schneider2017EarthSystemModeling}. 

\begin{algorithm}[t]
\caption{Basic Ensemble Kalman Inversion (EKI)}\label{alg:EKI}
\begin{algorithmic}[1]
\item[]\hspace{-\algorithmicindent}\parbox[t]{\dimexpr\linewidth-\algorithmicindent}{\textbf{Input:}  forward operator $\bH:\R^d\to\R^n$,
initial ensemble $\{\bv_0^{(1)},\ldots,\bv_0^{(J)}\}\subset\R^d$, positive-definite observation misfit weighting $\bGamma\in\R^{n\times n}$,
observations $\by\in\R^n$, observation perturbation covariance $\bSigma\in\R^{n\times n}$.}
\vspace{0.5em}
\FOR{$i = 0,1,2,\ldots,$}
\STATE{Compute observation-space ensemble: $\bh_i^{(j)} = \bH\big(\bv_i^{(j)}\big),$  $j = 1,2,\ldots,J$.}
\STATE{Compute empirical covariances: $\sfCov[\bv_i^{(1:J)},\bh_i^{(1:J)}]$} and $ \sfCov[\bh_i^{(1:J)}]$ \\ 
\STATE{Compute ensemble Kalman gain:  $\bK_i = \sfCov[\bv_i^{(1:J)}\!,\bh_i^{(1:J)}]\cdot \big(\sfCov[\bh_i^{(1:J)}] + \bGamma\big)^{-1}$}
\STATE{Sample $\beps_i^{(j)}$ i.i.d.\ from $\cN(\boldsymbol{0},\bSigma)$ for $j = 1,2,\ldots,J$.}
\STATE{Perturb observations: set $\by_i^{(j)}=\by + \beps_i^{(j)}$ for $j = 1,2,\ldots,J$.}
\STATE{Compute particle update: $\bv_{i+1}^{(j)} = \bv_i^{(j)}+\bK_i(\by_i^{(j)}-\bH\bv_i^{(j)})$ for $j = 1,2,\ldots,J$.
}
\IF{converged}
\STATE{\textbf{return} current ensemble mean, $\sfE[\bv_{i+1}^{(1:J)}]$}
\ENDIF
\ENDFOR 
\end{algorithmic}
\fbox{
{\small In \textit{stochastic} EKI, $\bSigma$ is typically chosen to be $\bSigma=\bGamma$. In \textit{deterministic} EKI, set $\bSigma=\boldsymbol{0}$.}
}
\end{algorithm}
We focus on a basic version of EKI from~\cite{iglesias2013ensemble} and presented in \Cref{alg:EKI}, noting that other EKI methods can be viewed as variations on this theme. Throughout this work, $\E$ and $\cov$ denote the \textit{true} expected value and covariance, respectively; whereas $\sfE$ and $\sfCov$ denote the \textit{empirical} (sample) mean and covariance operators, respectively: 
given $J\in\mathbb{N}$ samples $\{\ba^{(j)}\}_{j=1}^J$ and $\{\bb^{(j)}\}_{j=1}^J$, we define $\sfE[\ba^{(1:J)}] = \frac1J\sum_{j=1}^J \ba^{(j)}$, and
\begin{align*}
    \sfCov[\ba^{(1:J)},\bb^{(1:J)}] = \frac1{J-1}\sum_{j=1}^J (\ba^{(j)}-\sfE[\ba^{(1:J)}])(\bb^{(j)}-\sfE[\bb^{(1:J)}])^\top,
\end{align*}
and $\sfCov[\ba^{(1:J)}] = \sfCov[\ba^{(1:J)},\ba^{(1:J)}]$.
\Cref{alg:EKI} prescribes the evolution of an ensemble of $J$ particles, $\{\bv_i^{(1)},\ldots,\bv_i^{(J)}\}$, initialized at $i = 0$ in some way, e.g., by drawing from a suitable prior distribution,
and subsequently updated for $i = 1,2,3,$ etc. 
Those familiar with ensemble Kalman \textit{filtering} methods will recognize familiar elements in \Cref{alg:EKI}. Indeed, one way to obtain \Cref{alg:EKI} is to apply the ensemble Kalman filter to a system whose dynamics are given by the identity map in the ``forecast" step of the filter. 
The connection to Kalman filtering gives rise to two versions of the method: in Step 6 of \Cref{alg:EKI}, the observations can either be perturbed by random noise\footnote{\Cref{alg:EKI} adds these perturbations to the observed \textit{data}, $\by$, in line with both the EKI literature as well as historical ensemble Kalman filtering literature. Note that after substituting $\by_i^{(j)}=\by+\beps_i^{(j)}$ into the update in Step 7,  $\beps_i^{(j)}$ can alternatively be viewed as a perturbation to the \textit{forecast observations}, $\bH\bv_i^{(j)}$~\cite{hodyssEnsembleStateEstimation2011,vetra-carvalhoStateoftheartStochasticData2018}. In the filtering context, the work~\cite{van2020consistent} shows that this alternative interpretation is more consistent with the underlying statistical inference problem.}, yielding Stochastic EKI (also called the `noisy'~\cite{schillings2018convergence,bungert2023complete} or the `perturbed observations'~\cite{zhang2010ensemble,van2020consistent} case), or the observation perturbation can be set to zero, yielding Deterministic EKI (also called the `noise-free' or `clean data' case~\cite{schillings2017analysis,bungert2023complete}). 
In the ensemble Kalman \textit{filter}, these stochastic perturbations ensure unbiased estimates of the filtering statistics.
In ensemble Kalman \textit{inversion}, numerical evidence suggests that the stochastic perturbations improve performance in nonlinear inverse problems~\cite{iglesias2013ensemble}.

There is a very rich body of literature developing convergence theory for EKI, including convergence analyses of either mean-field limits of the iteration (equivalent to an infinitely large ensemble)~\cite{calvello2022ensemble,ding2021ensemble}, or continuous-time limits, in which the deterministic iteration becomes a system of coupled ordinary differential equations (ODEs)~\cite{bungert2023complete,schillings2017analysis,schillings2018convergence} and the stochastic iteration becomes a system of coupled stochastic differential equations (SDEs)~\cite{blomker2018strongly,blomker2019well,blomker2022continuous}).
The O/SDE solutions at $t=1$ can be shown to approximate a Bayesian posterior distribution, and some works derive sampling methods by numerically integrating the O/SDE from $t=0$ to $t=1$. In contrast, as $t\to\infty$, solutions to the O/SDEs can be shown to converge to the solution of the least-squares problem~\eqref{eq: least squares problem}. In this paper, our focus is on this latter case, in which the basic version of EKI in \Cref{alg:EKI} can be viewed as a discretization from $t=0$ to $t=\infty$ of the underlying O/SDE with unit time step. 

Many works have also introduced methodological variations from the basic algorithm, for example by incorporating a Tikhonov regularization term into the least-squares objective function~\cite{chada2020tikhonov,weissmannAdaptiveTikhonovStrategies2022}, enforcing constraints in the optimization~\cite{albersEnsembleKalmanMethods2019,carrilloConsensusbasedOptimizationEnsemble2023,chadaIncorporationBoxConstraintsEnsemble2019,hanuEnsembleKalmanInversion2024}, or introducing hierarchical~\cite{chadaAnalysisHierarchicalEnsemble2018}, multilevel~\cite{weissmann2022multilevel}, and parallel~\cite{zhangParallelEnsembleKalman2024} versions of the algorithm.
Beyond the successful use of EKI for solving diverse inverse problems in the physical sciences, e.g., in geophysical~\cite{basSwaveVelocityEstimation2022,pensoneaultEnsembleKalmanInversion2023,tsoEnsembleKalmanInversion2024} and biological contexts~\cite{iglesiasEnsembleKalmanInversion2022}, EKI has also been used as an optimizer for training machine learning models~\cite{guth14EnsembleKalman2022,kovachkiEnsembleKalmanInversion2019,lopez-gomezTrainingPhysicsBasedMachineLearning2022,schneiderLearningStochasticClosures2021,schneiderEnsembleKalmanInversion2022}.
In particular, the use of EKI for training neural networks~\cite{kovachkiEnsembleKalmanInversion2019} has motivated the development of EKI variants based on ideas used for gradient-based training of neural networks, including dropout~\cite{liu2023dropout}, data subsampling (also called `(mini-)batching')~\cite{hanuSubsamplingEnsembleKalman2023,kovachkiEnsembleKalmanInversion2019}, adaptive step sizes~\cite{chada2022convergence}, and convergence acceleration with Nesterov momentum~\cite{kovachkiEnsembleKalmanInversion2019}. As we will describe in more detail in the body of this work, the convergence of basic EKI is slow, occurring at a $1/\sqrt{i}$ rate~\cite{blomker2019well,schillings2017analysis,schillings2018convergence}, and convergence occurs only in the subspace spanned by the initial ensemble~\cite{iglesias2013ensemble}, which can be a limitation in settings where the ensemble size is small due to computational cost constraints. Several works have therefore proposed strategies which break the subspace property, including variance inflation or `localization' strategies~\cite{armbrusterStabilizationContinuousLimit2022,blomker2019well,chada2022convergence,huang2022EfficientDerivativefreeBayesian,tong2023localized,liu2023dropout,ghattas2024NonAsymptoticAnalysisEnsemble}, which may involve random perturbations of ensemble members~\cite{huang2022EfficientDerivativefreeBayesian,liu2023dropout,schillings2017analysis}, or adaptive ensemble sizes~\cite{parzerConvergenceRatesAdaptive2022}. Variance inflation can also lead to faster convergence~\cite{chada2022convergence,huang2022EfficientDerivativefreeBayesian}; in particular, perturbing EKI states with appropriately defined noise yields exponential convergence~\cite{huang2022EfficientDerivativefreeBayesian}. 
Beyond the weighted least squares problem~\eqref{eq: least squares problem}, EKI has been adapted to new settings including Bayesian inverse problems~\cite{huang2022EfficientDerivativefreeBayesian}, rare event sampling~\cite{wagnerEnsembleKalmanFilter2022}, and time-varying forward models~\cite{weissmannEnsembleKalmanFilter2024}. 
Ensemble Kalman methods more generally can also be viewed in terms of measure transport~\cite{bergemann2010LocalizationTechniqueEnsemble,bergemann2010MollifiedEnsembleKalman,daum2010exact,gu2007IterativeEnsembleKalman,li2009IterativeEnsembleKalman,hertyKineticMethodsInverse2019,pidstrigach2023AffineInvariantEnsembleTransform,reich2015probabilistic,sakov2012iterative} with connections to sequential Monte Carlo methods~\cite{chopin2020introduction,delmoral2006SequentialMonteCarlo}; for further reading, see two excellent recent surveys~\cite{calvello2022ensemble,chadaIterativeEnsembleKalman2021}.

Our contributions in the present work are new spectral and convergence analyses of both the deterministic and stochastic versions of basic EKI (\Cref{alg:EKI}) for \textit{linear} observation operators $\bH$. 
Convergence results for some classes of nonlinear observers have previously been derived in~\cite{weissmann2022gradient,chada2022convergence,wagnerEnsembleKalmanFilter2022}; 
however, our focus on the linear case  yields a natural interpretation of EKI's convergence properties in terms of `fundamental subspaces of EKI', akin to the four `fundamental subspaces' that characterize Strang's `Fundamental Theorem of Linear Algebra'~\cite{strang1993fundamental}.
We directly analyze discrete EKI iterations rather than their continuous-time limits, and utilize spectral decompositions to define three fundamental subspaces in both the observation space $\R^n$, and the state space $\R^d$, yielding a total of \textit{six} fundamental subspaces of EKI. 
We directly consider the discrete iteration for two principal reasons: (i)~continuous-time limits can exhibit instabilities that are not observed for the discrete algorithms~\cite{armbrusterStabilizationContinuousLimit2022}, and (ii) we can state and prove results about the discrete iteration without involving the properties of solutions to ODEs and SDEs, making the linear theory of EKI more broadly accessible to a wider audience, particularly in the stochastic setting. Our specific contributions are:
\begin{enumerate}
    \item New spectral and convergence analysis of the \textit{discrete} deterministic EKI iteration in both the observation and state spaces that (a) verify convergence rates previously derived in the continuous-time limit, (b) unify previous results to define fundamental subspaces of EKI for the general case where $\bH$ and the ensemble covariance may both be low-rank, and (c)~yield new results relating EKI solutions to the standard minimum-norm least squares solution.
    \item New spectral and convergence analysis of the \textit{discrete} stochastic EKI iteration based on an idealized covariance iteration that reflects the expected statistics conditioned on noise at previous iterations. This yields (a) new results concerning the convergence of linear stochastic EKI particles in fundamental subspaces that are analogous to those of deterministic EKI, and (b) new insight into the failure of stochastic EKI to converge for small ensemble sizes.
    \item Numerical experiments demonstrating that the derived analytical results are descriptive.
\end{enumerate}

The remainder of this work is organized as follows.
\Cref{sec: fundamental subspaces} summarizes our main results. \Cref{sec: deterministic analysis,sec: stochastic analysis} prove results for deterministic and stochastic linear EKI, respectively. \Cref{sec: numerics} numerically illustrates our results and \Cref{sec: conclusions} concludes.

\section{The fundamental subspaces of Ensemble Kalman Inversion}\label{sec: fundamental subspaces}
For the remainder of this work, we assume $\bH$ is a linear operator, $\bH\in\R^{n\times d}$, with rank $h \leq \min(n,d)$. We begin by reviewing the standard four fundamental subspaces of the weighted least squares problem (\Cref{subsec: LS fundamental subspaces}). In \Cref{subsec: EKI fundamental subspaces}, we then introduce the six fundamental subspaces of EKI at a high level and summarize our convergence results. Detailed technical definitions and proofs are deferred to \Cref{sec: deterministic analysis,sec: stochastic analysis}.

\begin{figure}
    \centering
    \includegraphics[width=0.95\linewidth,trim={0 0.8cm 0 0},clip]{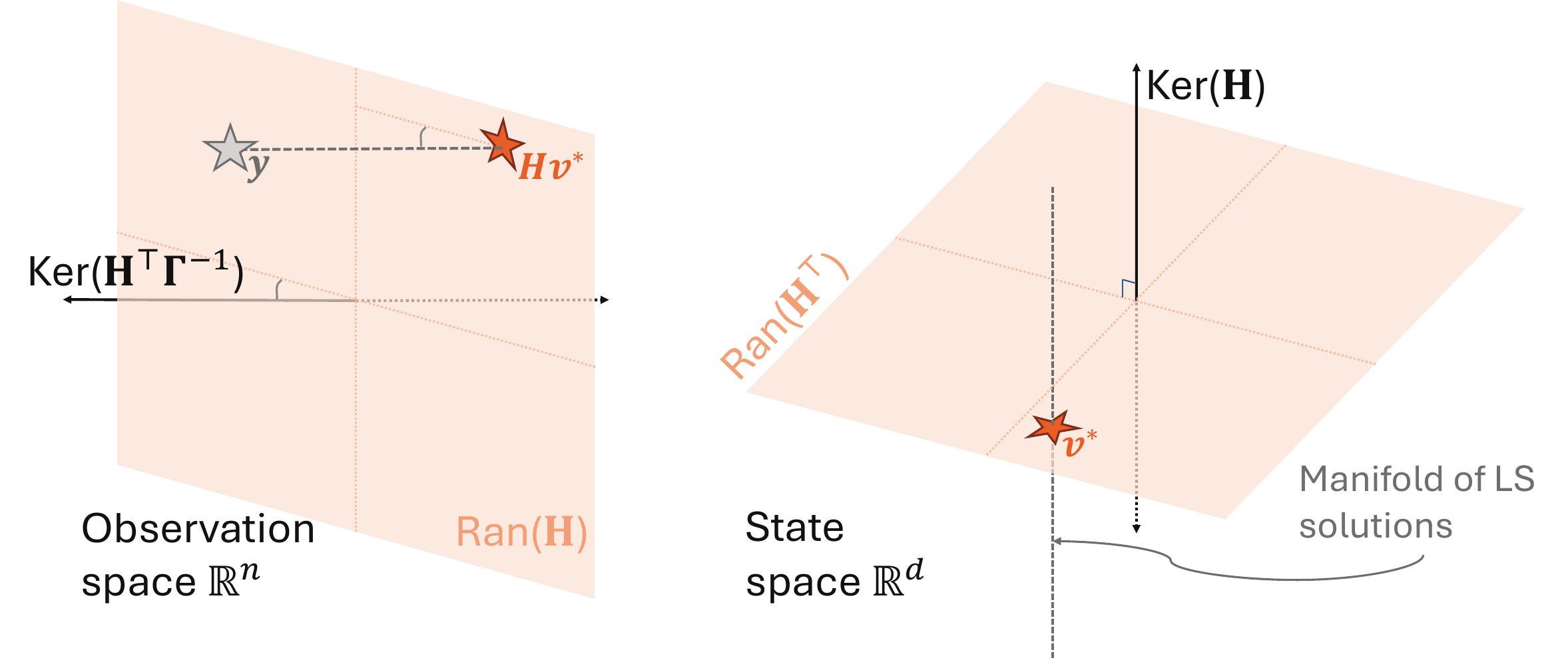}
    \caption{The four fundamental subspaces of the weighted least squares problem~\eqref{eq: least squares problem} for linear observers, $\bH\in\R^{n\times d}$. In observation space (left), the image of the least squares solution (orange star) is the $\bGamma^{-1}$-orthogonal projection of the data (gray star) onto $\Ran(\bH)$. In state space (right), the minimum-norm least squares solution (orange star) lies in $\Ran(\bH^\top)$ with zero component in $\Ker(\bH)$. }
    \label{fig: four LA fundamental subspaces}
\end{figure}

\subsection{Fundamental subspaces of weighted least squares problems}\label{subsec: LS fundamental subspaces}
In the context of the weighted least-squares problem~\eqref{eq: least squares problem}, the four fundamental subspaces arise from dividing the two fundamental \textit{spaces} (observation space $\R^n$ and state space $\R^d$) into two \textit{sub}spaces each, one subspace associated with `observable' directions and a complementary subspace associated with `unobservable' directions as depicted in \Cref{fig: four LA fundamental subspaces}. In observation space $\R^n$, the two fundamental subspaces are the range of $\bH$, denoted $\Ran(\bH)$, and its $\bGamma^{-1}$-orthogonal complement, $\Ker(\bH^\top\bGamma^{-1})$ (denoting the nullspace of $\bH^\top\bGamma^{-1}$). In our least-squares problem, the closest that $\bH\bv$ can come to $\by\in\R^n$ with respect to the $\bGamma^{-1}$-norm is the $\bGamma^{-1}$-orthogonal projection of $\by$ onto the observable space $\Ran(\bH)$, which then has a zero component in the (unobservable) subspace $\Ker(\bH^\top\bGamma^{-1})$.
In state space $\R^d$, the two fundamental subspaces are $\Ran(\bH^\top)$, and its orthogonal complement with respect to the Euclidean norm, $\Ker(\bH)$. State space directions in $\Ker(\bH)$ are unobservable because they are mapped by $\bH$ to zero and thus do not influence the minimand of~\eqref{eq: least squares problem}. If $\Ker(\bH)$ is non-trivial, multiple minimizers of~\eqref{eq: least squares problem} exist. 
 A standard choice making the problem~\eqref{eq: least squares problem} well-posed is a norm-minimizing solution given by:
\begin{equation}\label{eq: ls solution}
    \bv^* = (\bH^\top\bGamma^{-1}\bH)^{\dagger}\bH^\top\bGamma^{-1}\by\equiv \bH^+\by,
\end{equation}
where ``$\dagger$" denotes the usual Moore-Penrose pseudoinverse and we have introduced the weighted pseudoinverse, $\bH^+=(\bH^\top\bGamma^{-1}\bH)^{\dagger}\bH^\top\bGamma^{-1}$. 
This unique norm-minimizing solution to~\eqref{eq: least squares problem} lies in the observable space $\Ran(\bH^\top)$ and has a zero component in the unobservable space $\Ker(\bH)$. 

\subsection{Fundamental subspaces of Ensemble Kalman Inversion}\label{subsec: EKI fundamental subspaces}
Denote the empirical particle covariance by $\bC_i=\sfCov[\bv_i^{(1:J)}]$. 
In ensemble Kalman inversion, fundamental subspaces arise first from dividing the state and observation spaces into directions that are `populated' by particles, lying in the range of $\bC_i$ (it is well-known that $\Ran(\bC_i)$ is invariant for all $i$~\cite{blomker2019well,bungert2023complete,iglesias2013ensemble,schillings2017analysis}), and `unpopulated' directions lying in a complementary subspace. The populated subspace can then be further divided into two subspaces associated with observable and unobservable directions. There are therefore three subspaces in each of the observation and state spaces, and each of those sets of three subspaces are associated with three complementary oblique projection operators, $\bcP,\bcQ,\bcN\in\R^{n\times n}$ in observation space and $\bbP,\bbQ,\bbN\in\R^{d\times d}$ in state space, which we will  define precisely in subsequent sections. In observation space $\R^n$, the three fundamental subspaces are then (\textit{i}) $\Ran(\bcP)\equiv \Ran(\bH\bC_i)$, associated with observable populated directions, (\textit{ii}) $\Ran(\bcQ)\equiv \bH\,\Ker(\bC_i\fisher)$, associated with observable but \textit{un}populated directions, and (\textit{iii})~$\Ran(\bcN)\equiv\Ker(\bH^\top\bGamma^{-1})$, associated with unobservable directions. In state space $\R^d$, the three fundamental subspaces are (\textit{i}) $\Ran(\bbP)\subset\Ran(\bC_i)$, associated with observable populated directions (but generally \textit{not} simply the intersection of $\Ran(\bC_i)$ with $\Ran(\bH^\top)$); (\textit{ii})~$\Ran(\bbQ)\subset\Ran(\bH^\top)$ associated with observable \textit{un}populated directions, and (\textit{iii})~$\Ran(\bbN)$ associated with unobservable directions. In subsequent sections, we will describe more precise relationships between these subspaces and the operators $\bH$ and $\bC_i$. We will also show that EKI misfits [residuals] converge to zero at a $1/\sqrt{i}$ rate in the fundamental subspace associated with observable and populated directions, $\Ran(\bcP)$ [$\Ran(\bbP)$], and remain constant in the fundamental subspaces associated with observable unpopulated directions, $\Ran(\bcQ)$ [$\Ran(\bbQ)$], and likewise with unobservable directions, $\Ran(\bcN)$ [$\Ran(\bbN)$]. The fundamental subspaces of EKI are depicted in \Cref{fig: six EKI fundamental subspaces}, and an interactive three-dimensional visualization is available at \texttt{\url{https://elizqian.github.io/eki-fundamental-subspaces/}}.

\begin{figure}[t]
    \centering
    \includegraphics[width=0.95\linewidth]{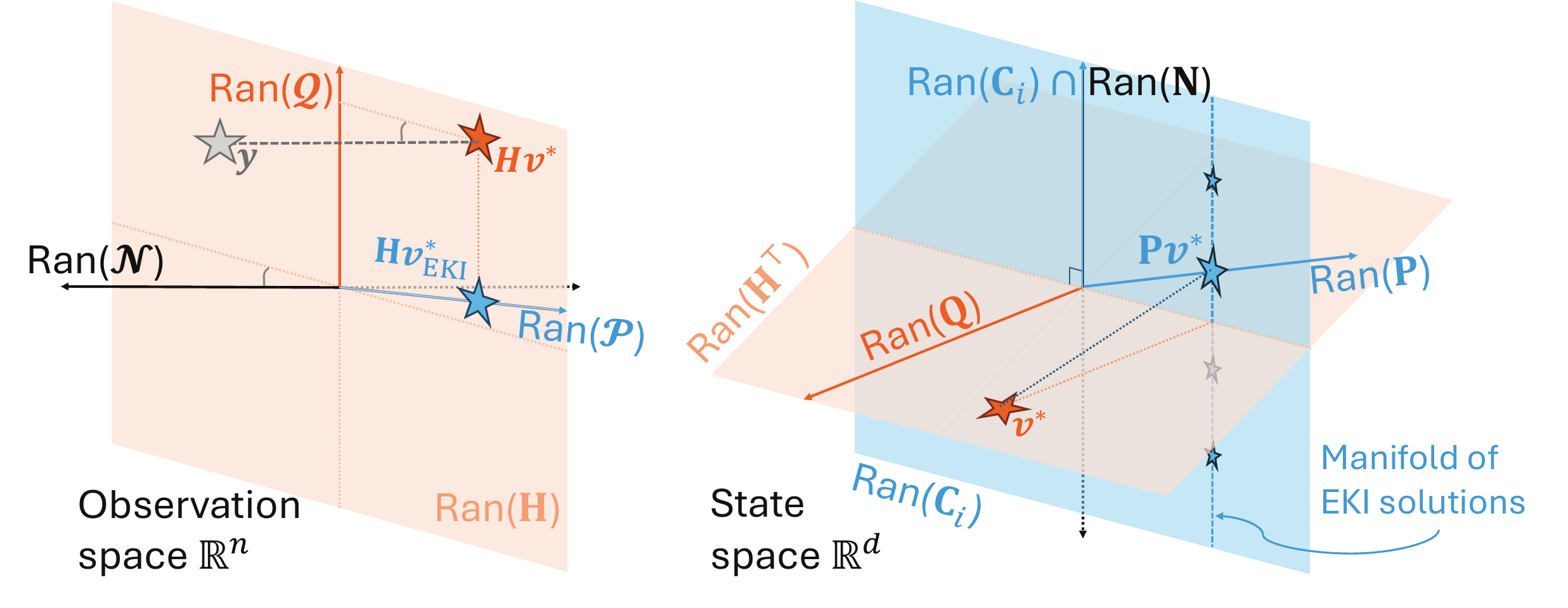}
    \caption{The six fundamental subspaces of Ensemble Kalman Inversion. In observation/state space (left/right), the three fundamental subspaces are (i) $\Ran(\bcP)$/$\Ran(\bbP)$, associated with observable populated directions, (ii) $\Ran(\bcQ)$/$\Ran(\bbQ)$ associated with observable unpopulated directions, and (iii)~$\Ran(\bcN)$/$\Ran(\bbN)$ associated with unobservable directions. In observation space, the image of the least squares solution (orange star) is projected by the oblique projector $\bcP$ to obtain the EKI solution (blue star). In state space, projection of the minimum norm solution (orange star) by the oblique projector $\bbP$ yields one possible EKI solution (large blue star) out of the EKI solution manifold (dashed blue line with small blue stars). }
    \label{fig: six EKI fundamental subspaces}
\end{figure}

\section{Analysis of deterministic EKI}\label{sec: deterministic analysis}
We first consider linear deterministic EKI (\Cref{alg:EKI} with the choice $\bSigma=\boldsymbol{0}$ so that $\by_i^{(j)} = \by$, and $\bH\in\R^{n\times d}$ a linear observer). 
\Cref{ssec: deterministic observation space,ssec: deterministic state space} develop results in observation space $\R^n$ and state space $\R^d$, respectively.

\subsection{Deterministic EKI: Analysis in observation space $\R^n$}\label{ssec: deterministic observation space}
We begin by deriving an iteration map that governs the evolution of the data misfit (\Cref{sssec: misfit iteration deterministic}). 
\Cref{sssec: observation spectral analysis deterministic} provides a spectral analysis of this iteration map that distinguishes three fundamental subspaces in observation space within which EKI has differing convergence behaviors, which are analyzed in \Cref{sssec: observation convergence deterministic}.

\subsubsection{Deterministic EKI: The data misfit iteration}\label{sssec: misfit iteration deterministic}
Let $\btheta_i^{(j)} = \bh_i^{(j)}-\by=\bH\bv_i^{(j)}-\by$, for $j=1,\ldots,J$ and $i=0,1,\ldots$, denote the \textit{data misfits}, representing the deviation of the image of the particles under $\bH$ from the true data.
Let $\bC_i=\sfCov[\bv_i^{(1:J)}]$ denote the empirical particle covariance. Note that for a linear observer, $\bH$, $\sfCov[\bv_i^{(1:J)},\bh_i^{(1:J)}]\equiv \bC_i\bH^\top$, $\sfCov[\bh_i^{(1:J)}]\equiv\bH\bC_i\bH^\top$, so that the Kalman gain is given as $\bK_i \equiv \bC_i\bH^\top(\bH\bC_i\bH^\top +\bGamma)^{-1}$. The update step of \Cref{alg:EKI} becomes:
\begin{align}\label{eq: EKI update}
    \bv_{i+1}^{(j)} &= \bv_i^{(j)}+\bC_i\bH^\top(\bH\bC_i\bH^\top +\bGamma)^{-1}(\by_i^{(j)}-\bH\bv_i^{(j)}).
\end{align}
Then, setting $\by_i^{(j)}=\by$ for deterministic EKI yields 
\begin{align}
    \btheta_{i+1}^{(j)} &= \bH\bv_{i+1}^{(j)}-\by = \bH\bv_i^{(j)}+\bH\bC_i\bH^\top(\bH\bC_i\bH^\top +\bGamma)^{-1}(\by-\bH\bv_i^{(j)})-\by\nonumber\\
    &= (\bI - \bH\bC_i\bH^\top(\bH\bC_i\bH^\top +\bGamma)^{-1})(\bH\bv_i^{(j)}-\by) \nonumber \\
    &= \bGamma (\bH\bC_i\bH^\top +\bGamma)^{-1}\btheta_i^{(j)}\equiv \bcalM_i\btheta_i^{(j)},
    \label{eq: deterministic misfit iteration}
\end{align}
where we have defined the data misfit iteration map, $\bcalM_i = \bGamma (\bH\bC_{i}\bH^\top + \bGamma)^{-1}$. 
Describing the evolution of the data misfit, $\btheta_i^{(j)}$, will require an analysis of $\bcalM_i$. 

\subsubsection{Deterministic EKI: Decomposition of observation space $\R^n$}\label{sssec: observation spectral analysis deterministic}
A spectral analysis of $\bcalM_i$ will distinguish three fundamental subspaces of the observation space $\R^n$ that are invariant under $\bcalM_i$.
Consider the generalized eigenvalue problem defined by the pencil $(\bH\bC_i\bH^\top,\bGamma)$: that is, let $(\delta_{\ell,i},\bw_{\ell,i})$ satisfy
\begin{align}
    \bH\bC_{i}\bH^\top\bw_{\ell, i} = \delta_{\ell,i} \bGamma\bw_{\ell, i}\quad \mbox{for}\quad \ell=1,\ldots,n\  \mbox{and}\  i\geq 0.
    \label{eq: observation GEV}
\end{align}
Note that all eigenvalues of~\eqref{eq: observation GEV} are non-negative. 
While the eigenvalues, $\delta_{\ell,i}$, will evolve with $i$, we find that the corresponding eigenvectors remain constant:
\begin{proposition}\label{prop: observation GEV behavior}
    The generalized eigenvectors of~\eqref{eq: observation GEV} are constant with respect to the iteration index, $i$, so that $\bw_{\ell,i+1} = \bw_{\ell,i} = \bw_\ell$. The corresponding eigenvalues evolve with respect to $i$ according to: $\delta_{\ell,i+1} = \delta_{\ell,i}/(1+\delta_{\ell,i})^2$.
\end{proposition}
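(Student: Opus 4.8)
The plan is to track how the matrix pencil $(\bH\bGamma_i\bH^\top,\bSigma)$ changes from one iteration to the next and reduce everything to a scalar recursion along each generalized eigenvector. Write $\bS_i = \bH\bGamma_i\bH^\top$. Since $\bSigma$ is symmetric positive definite, the pencil $(\bS_i,\bSigma)$ admits a complete set of $\bSigma$-orthogonal generalized eigenvectors spanning $\R^n$; fix such a basis $\{\bw_\ell\}_{\ell=1}^n$ at $i=0$. I would then show by induction on $i$ that each $\bw_\ell$ remains a generalized eigenvector of $(\bS_i,\bSigma)$ with eigenvalue obeying the stated recursion. Because these $n$ vectors are $\bSigma$-orthogonal, they form a full eigenbasis at every step, which simultaneously closes the induction and justifies the statement that the eigenvectors may be taken constant in $i$.

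First I would establish the misfit-covariance recursion $\bS_{i+1} = \bcalM_i\bS_i\bcalM_i^\top$. This drops out of the data misfit iteration \eqref{eq: deterministic misfit iteration}: since $\btheta_i^{(j)} = \bH\bv_i^{(j)}-\by$, centering removes the constant $-\by$, so $\sfCov[\btheta_i^{(1:J)}] = \bH\,\sfCov[\bv_i^{(1:J)}]\,\bH^\top = \bS_i$; applying the linear map $\btheta_{i+1}^{(j)} = \bcalM_i\btheta_i^{(j)}$ to the empirical covariance gives $\bS_{i+1} = \sfCov[\btheta_{i+1}^{(1:J)}] = \bcalM_i\bS_i\bcalM_i^\top$. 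Recalling $\bcalM_i = \bSigma(\bS_i+\bSigma)^{-1}$ (so $\bcalM_i^\top = (\bS_i+\bSigma)^{-1}\bSigma$), this reads $\bS_{i+1} = \bSigma(\bS_i+\bSigma)^{-1}\bS_i(\bS_i+\bSigma)^{-1}\bSigma$.

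For the inductive step, suppose $\bS_i\bw_\ell = \delta_{\ell,i}\bSigma\bw_\ell$. Then $(\bS_i+\bSigma)\bw_\ell = (1+\delta_{\ell,i})\bSigma\bw_\ell$, giving the key identity $(\bS_i+\bSigma)^{-1}\bSigma\bw_\ell = (1+\delta_{\ell,i})^{-1}\bw_\ell$. Applying $\bS_{i+1}$ to $\bw_\ell$, using this identity on the rightmost factor, then $\bS_i\bw_\ell = \delta_{\ell,i}\bSigma\bw_\ell$, then the identity once more, the computation runs
\[
\bS_{i+1}\bw_\ell = \bSigma(\bS_i+\bSigma)^{-1}\bS_i\frac{\bw_\ell}{1+\delta_{\ell,i}} = \frac{\delta_{\ell,i}}{1+\delta_{\ell,i}}\bSigma(\bS_i+\bSigma)^{-1}\bSigma\bw_\ell = \frac{\delta_{\ell,i}}{(1+\delta_{\ell,i})^2}\bSigma\bw_\ell,
\]
so $\bw_\ell$ is a generalized eigenvector of $(\bS_{i+1},\bSigma)$ with $\delta_{\ell,i+1} = \delta_{\ell,i}/(1+\delta_{\ell,i})^2$. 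The non-negativity of all $\delta_{\ell,i}$ noted after \eqref{eq: observation GEV} ensures the recursion is well defined.

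I do not expect a real obstacle: the only points needing care are (a) deriving $\bS_{i+1} = \bcalM_i\bS_i\bcalM_i^\top$ cleanly — easiest via the misfit covariance rather than the state covariance, and keeping the non-symmetric $\bcalM_i$ and its transpose straight — and (b) phrasing the "constant eigenvectors" conclusion in the presence of possible eigenvalue multiplicities: positive definiteness of $\bSigma$ guarantees a $\bSigma$-orthogonal eigenbasis, and the computation above shows that a basis fixed at $i=0$ persists unchanged, so no choice-dependence issue arises.
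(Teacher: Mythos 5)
Your proposal is correct and follows essentially the same route as the paper: derive the covariance recursion $\bH\bGamma_{i+1}\bH^\top = \bcalM_i\,\bH\bGamma_i\bH^\top\bcalM_i^\top$ from the misfit iteration, use the identity $(\bH\bGamma_i\bH^\top+\bSigma)^{-1}\bSigma\bw_\ell = (1+\delta_{\ell,i})^{-1}\bw_\ell$, and apply it twice to obtain $\delta_{\ell,i+1}=\delta_{\ell,i}/(1+\delta_{\ell,i})^2$ with the same eigenvector. The added remarks about induction and the $\bSigma$-orthogonal eigenbasis are sound elaborations but do not change the argument.
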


\begin{proof}
Noting that $\btheta_i^{(j)}=\bh_i^{(j)}-\by$, and that (empirical) covariances are unchanged by translation yields $\sfCov[\btheta_i^{(1:J)},\btheta_i^{(1:J)}]=\sfCov[\bh_i^{(1:J)},\bh_i^{(1:J)}]=\bH\bC_i\bH^\top$. Then, 
\begin{align*}
    \bH\bC_{i+1}\bH^\top &= \sfCov[\btheta_{i+1}^{(1:J)},\btheta_{i+1}^{(1:J)}]=\bcalM_i\sfCov[\btheta_i^{(1:J)},\btheta_i^{(1:J)}]\bcalM_i^\top=\bcalM_i\bH\bC_i\bH^\top\bcalM_i^\top,\\
    &=\bGamma(\bH\bC_{i}\bH^\top+\bGamma)^{-1}\bH\bC_{i}\bH^\top(\bH\bC_{i}\bH^\top+\bGamma)^{-1}\bGamma.
\end{align*}
Suppose $(\delta_i,\bw_i)$ is an eigenpair of \eqref{eq: observation GEV} at iteration $i$, $\bH\bC_{i}\bH^\top\bw_i = \delta_i\bGamma\bw_i$. Then
\begin{align*}
    (\bH\bC_{i}\bH^\top+\bGamma)\bw_{i} = (1+\delta_i)\bGamma\bw_{i}
    \quad    \implies \quad
    (\bH\bC_{i}\bH^\top+\bGamma)^{-1}\bGamma\bw_{i} = \frac1{1+\delta_i}\bw_{i} \quad (*)
\end{align*}
which implies
\begin{align*}
    \bH\bC_i\bH^\top(\bH\bC_i\bH^\top + \bGamma)^{-1}\bGamma\bw_i = \bH\bC_i\bH^\top\frac1{1+\delta_i}\bw_i = \frac{\delta_i}{1+\delta_i}\bGamma\bw_i.
\end{align*}
Left-multiplying by $\bcalM_i = \bGamma(\bH\bC_i\bH^\top+\bGamma)^{-1}$ and applying ($*$) yields 
\begin{align*}
    \bH\bC_{i+1}\bH^\top\bw_i=\frac{\delta_i}{(1+\delta_i)^2}\bGamma\bw_i.
\end{align*}
\end{proof}

 We construct a basis for the observation space $\R^n$ made up of eigenvectors of~\eqref{eq: observation GEV}:
\begin{proposition}\label{prop: eigenvector basis deterministic observation}
    Zero eigenvalues of~\eqref{eq: observation GEV} remain zero for all iterations, i.e., $\delta_{\ell,0}=0$ implies $\delta_{\ell,i}=0$ for all subsequent $i\geq 1$. Similarly, $\delta_{\ell,0}>0$  implies $\delta_{\ell,i}>0$ for all $i\geq 1$. 
    Let $r$ denote the number of positive eigenvalues of~\eqref{eq: observation GEV} and let $h=\rank(\bH)$.
    Then, there is a $\bGamma$-orthogonal basis for $\R^n$ made up of eigenvectors of~\eqref{eq: observation GEV}, $\{\bw_1,\ldots,\bw_n\}$, such that
    \begin{enumerate}
        \item $\{\bw_1,\ldots,\bw_r\}\subset\Ran(\bGamma^{-1}\bH)$ are eigenvectors of~\eqref{eq: observation GEV} associated with positive eigenvalues $\delta_{1,i},\ldots,\delta_{r,i}$, labeled in non-increasing order at $i=1$, that is, $\delta_{1,1}\geq \delta_{2,1}\geq \cdots\geq\delta_{r,1}>0$. This ordering is preserved for subsequent $i\geq 1$.
        \item if $r<h$, $\{\bw_{r+1},\ldots,\bw_h\}\subset\Ran(\bGamma^{-1}\bH)$ are eigenvectors of~\eqref{eq: observation GEV} associated with zero eigenvalues, and
        \item if $h<n$, $\{\bw_{h+1},\ldots,\bw_n\}\subset\Ker(\bH^\top)$ are eigenvectors of~\eqref{eq: observation GEV} also associated with zero eigenvalues.
    \end{enumerate}
    Additionally, $\textsf{span}(\bw_1,\ldots,\bw_h)=\Ran(\bGamma^{-1}\bH)$ and $\textsf{span}(\bw_{h+1},\ldots,\bw_n)=\Ker(\bH^\top)$.
\end{proposition}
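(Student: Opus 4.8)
My plan is to build the eigenvector basis in three stages, corresponding to the three claimed groups, exploiting the key structural fact that $\bH\bGamma_i\bH^\top$ has its range inside $\Ran(\bH)$ for every $i$, while $\bSigma$ is positive definite. First I would observe that the whole pencil $(\bH\bGamma_i\bH^\top,\bSigma)$ is equivalent, after the congruence $\bv \mapsto \bSigma^{1/2}\bv$, to the standard symmetric eigenvalue problem for $\bSigma^{-1/2}\bH\bGamma_i\bH^\top\bSigma^{-1/2}$; this immediately gives a $\bSigma$-orthogonal eigenbasis of $\R^n$ and shows all eigenvalues are nonnegative (already noted in the text). The number of positive eigenvalues $r$ is the rank of $\bH\bGamma_i\bH^\top$, and the zero eigenspace is its $\bSigma$-orthogonal complement, i.e.\ $\Ker(\bH\bGamma_i\bH^\top) = \Ker(\bGamma_i^{1/2}\bH^\top)$.

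Second, I would establish the $i$-independence of $r$ and of the positive/zero dichotomy using \Cref{prop: observation GEV behavior}: since the eigenvectors are constant in $i$ and $\delta_{\ell,i+1} = \delta_{\ell,i}/(1+\delta_{\ell,i})^2$, the map $\delta\mapsto \delta/(1+\delta)^2$ fixes $0$ and sends any $\delta>0$ to a strictly positive number, so the sign of each $\delta_{\ell,i}$ is constant in $i\ge 1$ (and the count $r$ is well-defined). For the ordering claim in item~1, I would check that $\delta\mapsto\delta/(1+\delta)^2$ is monotone increasing on the relevant range — but this is false globally (it decreases past $\delta=1$), so the ordering is only guaranteed to be preserved if I invoke it \emph{after} one step, i.e.\ from $i=1$ onward the eigenvalues satisfy $\delta_{\ell,1}\le 1$ (indeed $\delta/(1+\delta)^2 \le 1/4$ always), on which interval the map \emph{is} increasing; this is exactly why the proposition labels the ordering at $i=1$, and I would spell that out. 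I would also note that the $\delta_{\ell,i}$ being distinct or not doesn't matter: within the zero eigenspace we have full freedom to choose any $\bSigma$-orthogonal basis.

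Third — the part I expect to be the main obstacle — I need to arrange the eigenbasis so that the positive-eigenvalue eigenvectors and the \emph{first batch} of zero-eigenvalue eigenvectors together span $\Ran(\bSigma^{-1}\bH)$, with the remaining zero eigenvectors spanning $\Ker(\bH^\top)$. The clean way is: $\R^n = \Ran(\bSigma^{-1}\bH) \oplus_{\bSigma} \Ker(\bH^\top)$ is a $\bSigma$-orthogonal decomposition (because $\langle \bSigma^{-1}\bH\bx, \bu\rangle_{\bSigma} = \bx^\top\bH^\top\bu = 0$ for $\bu\in\Ker(\bH^\top)$, and dimensions add to $n$), and both summands are invariant under $\bcalM_i$ and under $\bH\bGamma_i\bH^\top$. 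So I restrict the pencil to each summand separately. On $\Ker(\bH^\top)$, $\bH\bGamma_i\bH^\top$ acts as zero, so every vector there is a zero-eigenvector — take any $\bSigma$-orthogonal basis $\bw_{h+1},\dots,\bw_n$. On $\Ran(\bSigma^{-1}\bH)$ (dimension $h$), diagonalize the restricted pencil $\bSigma$-orthogonally: this yields $h$ eigenvectors, $r$ of them with positive eigenvalue (call them $\bw_1,\dots,\bw_r$, ordered by $\delta_{\ell,1}$) and $h-r$ with zero eigenvalue ($\bw_{r+1},\dots,\bw_h$); since these all lie in the invariant summand $\Ran(\bSigma^{-1}\bH)$, the containment claims in items~1 and~2 hold, and by construction $\textsf{span}(\bw_1,\dots,\bw_h)=\Ran(\bSigma^{-1}\bH)$. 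Finally I would remark that $r \le h$ automatically (it's the rank of a map whose range sits in the $h$-dimensional $\Ran(\bSigma^{-1}\bH)$), and that the positive eigenvectors necessarily avoid $\Ker(\bH^\top)$ since $\bH\bGamma_i\bH^\top\bw=\delta\bSigma\bw$ with $\delta>0$ forces $\bw\notin\Ker(\bH\bGamma_i\bH^\top)\supseteq$ nothing useful directly — cleaner to just say they come out of the restricted diagonalization on $\Ran(\bSigma^{-1}\bH)$. The one subtlety to watch is that "$r$ is the number of positive eigenvalues" must be read at any fixed $i\ge 1$ (equivalently $i=0$, by the dichotomy), and I should confirm $\rank(\bH\bGamma_i\bH^\top)$ is itself $i$-independent — which follows from the eigenvectors being fixed and the eigenvalue dichotomy.
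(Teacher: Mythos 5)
Your proposal is correct and follows essentially the same route as the paper: sign preservation and $r$'s $i$-independence from the eigenvalue recurrence, ordering preserved because $x/(1+x)^2$ is increasing on $(0,1)$ where all nonzero eigenvalues live for $i\geq 1$, and a $\bSigma$-orthogonal eigenbasis built so that the positive eigenvectors and the first batch of zero eigenvectors together span $\Ran(\bSigma^{-1}\bH)$ while the rest span $\Ker(\bH^\top)$. The only cosmetic difference is that you diagonalize the pencil restricted to each summand of the $\bSigma$-orthogonal decomposition $\R^n=\Ran(\bSigma^{-1}\bH)\oplus\Ker(\bH^\top)$, whereas the paper completes the positive-eigenvector set to a basis of $\Ran(\bSigma^{-1}\bH)$ and verifies directly that the added vectors are annihilated by $\bH\bGamma_i\bH^\top$; both arguments are sound and equivalent in substance.
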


\begin{proof}
The preservation of eigenvalue (non-)zeroness can be read directly from the eigenvalue recurrence in \Cref{prop: observation GEV behavior}.
We now detail the construction of an eigenvector basis satisfying the above conditions.
Take $\bw_1,\ldots,\bw_r$ to be the linearly independent eigenvectors associated with the $r$ positive eigenvalues of \eqref{eq: observation GEV} ordered so that $\delta_{1,1}\geq \delta_{2,1}\geq \cdots\geq\delta_{r,1}>0$.
Note that the recurrence relation in \Cref{prop: observation GEV behavior} implies that all nonzero eigenvalues must be in the interval $(0,1)$  for all $i\geq1$. Preservation of ordering follows from the fact that $\frac{x}{(1+x)^2}$ is monotone increasing for $x\in(0,1)$. 
Note that $\{\bw_1,\ldots,\bw_r\}$ are $\bGamma$-orthogonal and that $\textsf{span}(\bw_1,\ldots,\bw_r)=\Ran(\bGamma^{-1}\bH\bC_i\bH^\top)\subset\Ran(\bGamma^{-1}\bH)$.
  
If $r<h$, complete a $\bGamma$-orthogonal basis for $\Ran(\bGamma^{-1}\bH)$ with an additional $(h-r)$ linearly independent vectors $\{\bw_{r+1},\ldots,\bw_h\}$ so that $\textsf{span}(\bw_1,\ldots,\bw_h)=\Ran(\bGamma^{-1}\bH)$. 
Suppose $\widehat{\mathbf{w}}\in \mathsf{span}(\bw_{r+1},\ldots,\bw_h)$ is chosen arbitrarily. 
Since $\{\bw_1,\ldots,\bw_r\}$ form a basis for $\Ran(\bGamma^{-1}\bH\bC_i\bH^\top)$, $\widehat\bw$ must be $\bGamma$-orthogonal to $\Ran(\bGamma^{-1}\bH\bC_i\bH^\top)$. In particular, $\widehat\bw^\top\bGamma\bGamma^{-1}\bH\bC_i\bH^\top=0$ which implies $\bH\bC_i\bH^\top\widehat\bw=0$, so each of
$\{\bw_{r+1},\ldots,\bw_h\}$ are eigenvectors of \eqref{eq: observation GEV} associated with the zero eigenvalue. 

Finally, if $h<n$, choose  $(n-h)$ vectors, labeled as $\{\bw_{h+1},\ldots,\bw_n\}$, to be a $\bGamma$-orthogonal basis for $\Ker(\bH^\top)$. They then will be eigenvectors for \eqref{eq: observation GEV} associated with the zero eigenvalue. The full set of vectors, 
$\{\bw_1,\ldots,\bw_r,\bw_{r+1},\ldots,\bw_h,\bw_{h+1},\ldots,\bw_n\}$
will be a $\bGamma$-orthogonal basis of eigenvectors of \eqref{eq: observation GEV} that span $\mathbb{R}^{n}$. 
\end{proof}

\begin{remark}
    Note that $r=\rank(\bH\bC_i\bH^\top)$ since we have assumed $\bGamma$ is invertible. In fact, $r$ may be more generally thought of as the `rank' of a particular instance of EKI, because it is the dimension of the subspace in which EKI converges, as our further analysis will show. Note that $r\leq\min(h,\rank(\bC_i))$. 
\end{remark}

This construction leads us to a compact summary representation of \eqref{eq: observation GEV}: define $\bDelta_i = {\sf diag}(\delta_{1,i},\delta_{2,i},\ldots,\delta_{n,i})$ and  $\bW=[\bw_1,\bw_2,\ldots,\bw_n]$.
Then \eqref{eq: observation GEV} can be written as $\bH\bC_i\bH^\top\bW =  \bGamma\bW\bDelta_i$. $\bW$ is the matrix of eigenvectors of \eqref{eq: observation GEV} and $\bGamma$-orthogonality of the eigenvectors establishes that $\bW^\top\bGamma\bW$ is diagonal. 
Without loss of generality we may renormalize eigenvectors so that $\bw_\ell^\top\bGamma\bw_\ell=1$, or equivalently,  $\bW^\top\bGamma\bW=\bI$.   

The three types of eigenvectors of~\eqref{eq: observation GEV} described in \Cref{prop: eigenvector basis deterministic observation} are each associated with different invariant subspaces of the data misfit iteration map $\bcalM_i$, which we now characterize in terms of spectral projectors of $\bcalM_i$.

\begin{proposition}\label{def: observation projectors}
    Let  $\bW_{k:\ell}\in\R^{n\times(\ell-k+1)}$ contain columns $k$ through $\ell$ of $\bW$. Define
     $ \bcP = \bGamma\bW_{1:r}\bW_{1:r}^\top$,
        $\bcQ = \bGamma\bW_{r+1:h}\bW_{r+1:h}^\top$,  and 
        $\bcN = \bGamma\bW_{h+1:n}\bW_{h+1:n}^\top. $
    Then, $\bcP$, $\bcQ$, and $\bcN$ are  spectral projectors associated with the data misfit iteration map $\bcalM_i$, i.e., 
        $\bcalM_i\bcP = \bcP\bcalM_i$ 
        and $\bcP^2 = \bcP$,
    with similar assertions for $\bcQ$ and $\bcN$.  $\bcP$, $\bcQ$, and $\bcN$ are complementary in the sense that  $\bcP\bcQ = \bcP\bcN = \bcQ\bcN =\boldsymbol{0}$ and $\bcP+\bcQ + \bcN = \bI_n$.
\end{proposition}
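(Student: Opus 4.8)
The plan is to reduce everything to one structural identity, the $\bSigma$-orthonormality normalization $\bW^\top\bSigma\bW = \bI_n$ fixed just before the statement, supplemented by the eigenvector relation marked $(*)$ in the proof of \Cref{prop: observation GEV behavior} for the part involving $\bcalM_i$. Everything else is block-matrix bookkeeping.

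First I would record two consequences of $\bW^\top\bSigma\bW = \bI_n$. Since $\bW\in\R^{n\times n}$ and its columns form a basis of $\R^n$, $\bW$ is invertible with $\bW^{-1} = \bW^\top\bSigma$; hence also $\bW\bW^\top\bSigma = \bI_n$, i.e. $\bSigma\bW\bW^\top = \bI_n$. Partitioning $\bW = [\,\bW_{1:r}\ \ \bW_{r+1:h}\ \ \bW_{h+1:n}\,]$ conformally, the identity $\bW^\top\bSigma\bW = \bI_n$ says exactly that each diagonal block $\bW_{k:\ell}^\top\bSigma\bW_{k:\ell}$ is an identity matrix and that $\bW_{a:b}^\top\bSigma\bW_{c:d} = \boldsymbol 0$ whenever the index ranges $[a,b]$ and $[c,d]$ are disjoint.

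With these in hand, idempotency and mutual annihilation are immediate: $\bcP^2 = \bSigma\bW_{1:r}(\bW_{1:r}^\top\bSigma\bW_{1:r})\bW_{1:r}^\top = \bSigma\bW_{1:r}\bW_{1:r}^\top = \bcP$, and likewise $\bcQ^2=\bcQ$, $\bcN^2=\bcN$; and $\bcP\bcQ = \bSigma\bW_{1:r}(\bW_{1:r}^\top\bSigma\bW_{r+1:h})\bW_{r+1:h}^\top = \boldsymbol 0$, and similarly $\bcP\bcN = \bcQ\bcN = \boldsymbol 0$ (the same argument also gives the reversed products). Summing the three projectors gives $\bcP+\bcQ+\bcN = \bSigma\big(\bW_{1:r}\bW_{1:r}^\top + \bW_{r+1:h}\bW_{r+1:h}^\top + \bW_{h+1:n}\bW_{h+1:n}^\top\big) = \bSigma\bW\bW^\top = \bI_n$.

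It remains to show each projector commutes with $\bcalM_i$. Here I would revisit relation $(*)$, namely $(\bH\bGamma_i\bH^\top+\bSigma)^{-1}\bSigma\bw_\ell = (1+\delta_{\ell,i})^{-1}\bw_\ell$. Left-multiplying by $\bSigma$ shows $\bcalM_i(\bSigma\bw_\ell) = (1+\delta_{\ell,i})^{-1}\bSigma\bw_\ell$, so $\bSigma\bw_\ell$ is a right eigenvector of $\bcalM_i$; transposing $(*)$ and using symmetry of $\bSigma$ and of $\bH\bGamma_i\bH^\top+\bSigma$ gives $\bw_\ell^\top\bcalM_i = (1+\delta_{\ell,i})^{-1}\bw_\ell^\top$, so $\bw_\ell$ is a corresponding left eigenvector. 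Writing $\bcP = \sum_{\ell=1}^r (\bSigma\bw_\ell)\bw_\ell^\top$ and applying these two facts gives $\bcalM_i\bcP = \sum_{\ell=1}^r (1+\delta_{\ell,i})^{-1}(\bSigma\bw_\ell)\bw_\ell^\top = \bcP\bcalM_i$; the identical computation over $\ell = r{+}1,\ldots,h$ and over $\ell = h{+}1,\ldots,n$ (where $\delta_{\ell,i}=0$) handles $\bcQ$ and $\bcN$. Since the range of each projector is a span of eigenvectors of $\bcalM_i$, these are genuine spectral projectors. I do not expect a real obstacle; the only point needing care is the passage from $\bW^\top\bSigma\bW = \bI_n$ to $\bSigma\bW\bW^\top = \bI_n$, which uses that $\bW$ is square and invertible, and keeping the transposes straight when deriving the left-eigenvector identity for $\bcalM_i$.
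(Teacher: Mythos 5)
Your proposal is correct and takes essentially the same route as the paper: both arguments reduce idempotency, mutual annihilation, and the resolution of the identity to the block structure of $\bW^\top\bSigma\bW=\bI_n$ (equivalently $\bSigma^{-1}=\bW\bW^\top$), and both establish commutation by diagonalizing $\bcalM_i$ in the eigenvector basis — the paper writes $\bcalM_i=\bSigma\bW(\bI_n+\bDelta_i)^{-1}\bW^\top$ and multiplies blocks, while you obtain the equivalent left/right eigenvector identities from relation $(*)$ and sum rank-one terms. The difference is purely presentational; no gaps.
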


\begin{proof}
Note $\bcalM_i = \bGamma\bW(\bI_n+\bDelta_i)^{-1}\bW^\top$. Define $\bDelta_{1:r}^{(i)} = \textsf{diag}(\delta_{1,i},\ldots,\delta_{r,i})$. Then, 
\begin{align*}
    \bcalM_i\bcP &= \bGamma\bW(\bI_n+\bDelta_i)^{-1}\bW^\top \bGamma\bW_{1:r}\bW_{1:r}^\top = \bGamma\bW_{1:r}(\bI_r+\bDelta_{1:r}^{(i)})^{-1}\bW_{1:r}^\top\\
    &=\bGamma\bW_{1:r}\bW_{1:r}^\top\bGamma\bW(\bI_n+\bDelta_i)^{-1}\bW^\top = \bcP\bcalM_i.
\end{align*}
The normalization condition implies $\bW_{1:r}^\top \bGamma\bW_{1:r}=\bI_r$, so we have upon substitution $\bcP^2 =\bGamma\;\bW_{1:r}(\bW_{1:r}^\top\bGamma\;\bW_{1:r})\bW_{1:r}^\top =\bcP$.
Similar arguments can be followed to show that $\bcalM_i\,\bcQ = \bcQ\bcalM_i$ and  $\bcQ^2 = \bcQ$, as well as $\bcalM_i\,\bcN = \bcN\bcalM_i$ and  $\bcN^{\,2} = \bcN$.  
 The assertion $\bcP\bcQ = \bcP\bcN = \bcQ\bcN =\boldsymbol{0}$ follows immediately from the $\bGamma$-orthogonality of the eigenvector basis.  Likewise,  $\bW^\top\bGamma\bW=\bI$ implies $\bGamma^{-1}=\bW\bW^\top=\bW_{1:r}\bW_{1:r}^\top+\bW_{r+1:h}\bW_{r+1:h}^\top+\bW_{h+1:n}\bW_{h+1:n}^\top$.  Multiplication by $\bGamma$ verifies $\bcP+\bcQ + \bcN = \bI$. 
\end{proof}
We briefly explain our notational choices for the projectors $\bcP,\bcQ,\bcN$: we use $\bcN$ to denote the projector associated with the unobservable directions that lie in the kernel (\textit{n}ullspace) of $\bH^\top$. In the remaining observable directions, we use $\bcP$ to denote the projector associated with directions that are \textit{p}opulated by the particle covariance, and use $\bcQ$ to denote the complementary projector to $\bcP$ within the observable subspace.
The data misfits can thus be divided into three components associated with the oblique projectors defined in \Cref{def: observation projectors}: $\btheta_i^{(j)} = \bcP\btheta_i^{(j)}+\bcQ\btheta_i^{(j)}+\bcN\btheta_i^{(j)}$. 
These components will \emph{not} generally be orthogonal to one another, though they each occupy invariant subspaces that reflect differing convergence behaviors as we show in the next section.

\begin{remark}\label{rem: trivial obs space}
    Note that it is possible for any of our three subspaces to become trivial in special cases: if $\Ran(\bH)=\R^n$ then $\Ran(\bcN)$ is trivially $\{\mathbf{0}\}$ because all directions are observable. If $\bC_i$ is full-rank then $\Ran(\bcQ)$ is trivially $\{\mathbf{0}\}$. If $\Ran(\bC_i)\subset \Ker(\bH)$ then that would mean the ensemble would be completely unobservable and thus  $\Ran(\bcP)$ is trivially $\{\mathbf{0}\}$.
\end{remark}

\subsubsection{Deterministic EKI: Convergence analysis in observation space $\R^n$}\label{sssec: observation convergence deterministic}
We first prove a lemma concerning the reciprocal nonzero eigenvalues.
\begin{lemma}\label{lem: reciprocal recurrence}
    If $\delta_{\ell,0}\neq0$, then for all $i\geq1$, we have $\frac1{\delta_{\ell,i}} = \frac1{\delta_{\ell,0}}+ 2i + \sum_{k=0}^{i-1}\delta_{\ell,k}$. 
\end{lemma}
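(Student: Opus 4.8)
The plan is to reduce everything to the scalar eigenvalue recurrence already established in \Cref{prop: observation GEV behavior}, namely $\delta_{\ell,i+1} = \delta_{\ell,i}/(1+\delta_{\ell,i})^2$, and then telescope. First I would record that the hypothesis $\delta_{\ell,0}\neq 0$ together with \Cref{prop: eigenvector basis deterministic observation} guarantees $\delta_{\ell,i}>0$ for every $i\geq 0$, so that each reciprocal $1/\delta_{\ell,i}$ appearing in the claimed identity is well-defined; this is the only place any earlier structural result is needed.

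Next, the key computation: take reciprocals in the recurrence of \Cref{prop: observation GEV behavior} to get
\begin{align*}
    \frac{1}{\delta_{\ell,i+1}} = \frac{(1+\delta_{\ell,i})^2}{\delta_{\ell,i}} = \frac{1+2\delta_{\ell,i}+\delta_{\ell,i}^2}{\delta_{\ell,i}} = \frac{1}{\delta_{\ell,i}} + 2 + \delta_{\ell,i}.
\end{align*}
This is the crux — the squared denominator is exactly what makes the reciprocal recurrence affine in $1/\delta_{\ell,i}$ with the inhomogeneous term $2+\delta_{\ell,i}$.

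Finally I would sum this one-step identity over $k=0,1,\ldots,i-1$ (equivalently, a trivial induction on $i$), so that the left side telescopes to $1/\delta_{\ell,i}-1/\delta_{\ell,0}$ and the right side yields $\sum_{k=0}^{i-1}(2+\delta_{\ell,k}) = 2i + \sum_{k=0}^{i-1}\delta_{\ell,k}$, giving the stated formula. I do not anticipate any real obstacle here: the entire argument is a one-line algebraic manipulation of the known recurrence followed by telescoping, and the only subtlety worth a sentence is confirming positivity of all iterates so that the manipulation is legitimate.
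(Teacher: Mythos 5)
Your proposal is correct and follows essentially the same route as the paper: both derive the one-step identity $\frac{1}{\delta_{\ell,i+1}} = \frac{1}{\delta_{\ell,i}} + 2 + \delta_{\ell,i}$ from the recurrence in \Cref{prop: observation GEV behavior} and then accumulate it over iterations (the paper phrases this as induction, you as telescoping, which is the same argument). Your added remark on positivity of the iterates is a reasonable bit of care that the paper leaves implicit.
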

\begin{proof}
    We prove by induction, verifying first  the base case with $i=1$: 
    $$
\frac1{\delta_{\ell,1}}=\frac{(1+\delta_{\ell,0})^2}{\delta_{\ell,0}} = \frac{1+2\delta_{\ell,0} + \delta_{\ell,0}^2}{\delta_{\ell,0}} = \frac1{\delta_{\ell,0}} + 2 + \delta_{\ell,0}.
$$ 
The same calculation for $i\geq 1$ yields $\frac{1}{\delta_{\ell,i+1}}=\frac1{\delta_{\ell,i}} + 2 +\delta_{\ell,i}$. Substituting the inductive hypothesis for $\frac1{\delta_{\ell,i}}$ and re-arranging completes the induction step and the proof.
\end{proof}

We use this lemma to provide upper and lower bounds on the asymptotic behavior of the nonzero eigenvalues. We make use of the following fact:
\begin{fact}[\cite{young1991EulerConstant}]\label{fact: reciprocal sum bounds}
    For all $i\geq 1$,  $\log i <\sum_{k=1}^i\frac1k < 2 + \log i$. 
\end{fact}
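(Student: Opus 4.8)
Proof proposal for Fact 3.9 ($\log i < \sum_{k=1}^i \frac 1k < 2 + \log i$ for all $i \ge 1$).

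The plan is to bound the harmonic partial sum by integrals of $1/x$, the standard integral-comparison argument for monotone functions. Since $x \mapsto 1/x$ is decreasing on $(0,\infty)$, for each integer $k \ge 1$ we have $\frac{1}{k+1} < \int_k^{k+1}\frac{dx}{x} < \frac 1k$. Summing the left inequality over $k = 1,\ldots,i-1$ gives $\sum_{k=2}^{i}\frac 1k < \int_1^i \frac{dx}{x} = \log i$, hence $\sum_{k=1}^i \frac 1k < 1 + \log i < 2 + \log i$, which is the upper bound (in fact with room to spare). For the lower bound, sum the right inequality $\int_k^{k+1}\frac{dx}{x} < \frac 1k$ over $k = 1,\ldots,i$ to get $\log(i+1) = \int_1^{i+1}\frac{dx}{x} < \sum_{k=1}^i \frac 1k$, and since $\log(i+1) > \log i$ this yields $\sum_{k=1}^i\frac 1k > \log i$. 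One should check the edge case $i = 1$ directly: the sum is $1$, and $\log 1 = 0 < 1 < 2 + \log 1 = 2$, so the claim holds there too (the integral argument for the lower bound already covers it via $\log 2 > 0$, but it is cleanest to note it).

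Alternatively, I could phrase this entirely via telescoping without invoking integrals: using $\log\frac{k+1}{k} = \log(1 + \frac 1k)$ and the elementary inequalities $\frac{1}{k+1} \le \log(1+\tfrac 1k) \le \frac 1k$ (the right one from $\log(1+t) \le t$, the left from applying the same to $t = \frac{1}{k}$ after rewriting, or from convexity), then telescoping $\sum_{k=1}^i \log(1+\frac 1k) = \log(i+1)$. This avoids any appeal to calculus of integrals and keeps everything at the level of the logarithm's functional equation plus one standard inequality.

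I do not anticipate a genuine obstacle here — this is a textbook estimate and the constant $2$ in the upper bound is deliberately loose (the sharp constant is $1$, approached via the Euler–Mascheroni constant $\gamma \approx 0.577$), so there is no tightness to fight for. The only thing to be careful about is getting the summation ranges right so that the telescoped logarithm lands on $\log i$ versus $\log(i+1)$ on the correct side of each inequality, and handling $i=1$ so the statement is literally true as written rather than merely true for $i \ge 2$.
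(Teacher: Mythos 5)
Your proof is correct. The paper states this as a \emph{Fact} without supplying any proof, so there is nothing to compare against; your integral-comparison argument is the standard one, the summation ranges are handled correctly (the lower bound lands on $\log(i+1) > \log i$, the upper bound gives the sharper constant $1$), and you rightly check $i=1$ separately since the empty-sum version of the upper-bound telescoping degenerates there.
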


\begin{proposition}\label{prop: deterministic eigenvalue bounds}
    If $\delta_{\ell,0}\neq0$, then there exists a constant $c_\ell>0$ so that for all $i\geq1$ the following bounds hold:
    \begin{align*}
        \frac1{2i}-\frac{c_\ell+\log i}{8i^2} < \delta_{\ell,i} < \frac1{2i}.
    \end{align*}
\end{proposition}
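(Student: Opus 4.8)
The plan is to read both inequalities off the exact recurrence for $1/\delta_{\ell,i}$ supplied by \Cref{lem: reciprocal recurrence}, namely $\frac1{\delta_{\ell,i}} = \frac1{\delta_{\ell,0}} + 2i + \sum_{k=0}^{i-1}\delta_{\ell,k}$. For the upper bound, I would simply note that $\delta_{\ell,0}>0$ (all eigenvalues of~\eqref{eq: observation GEV} are nonnegative and $\delta_{\ell,0}\neq 0$) and $\delta_{\ell,k}\geq 0$ for every $k$, so the right-hand side of the recurrence strictly exceeds $2i$; hence $\delta_{\ell,i}<\frac1{2i}$ for all $i\geq 1$. I would also observe that \Cref{prop: eigenvector basis deterministic observation} propagates nonzeroness, $\delta_{\ell,k}\neq 0$ for every $k$, so that this upper bound is legitimately available at \emph{every} index.

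For the lower bound, I would feed the just-proved upper bound back into the recurrence to control the sum. Splitting off the $k=0$ term and bounding $\delta_{\ell,k}<\frac1{2k}$ for $1\leq k\leq i-1$ gives $\sum_{k=0}^{i-1}\delta_{\ell,k} < \delta_{\ell,0} + \tfrac12\sum_{k=1}^{i}\tfrac1k < \delta_{\ell,0} + 1 + \tfrac12\log i$ by \Cref{fact: reciprocal sum bounds}; this estimate also covers $i=1$, where the sum over $k\geq 1$ is empty and $\sum_{k=0}^{0}\delta_{\ell,k}=\delta_{\ell,0}$. Defining $c_\ell := 2\big(\tfrac1{\delta_{\ell,0}} + \delta_{\ell,0} + 1\big)>0$, the recurrence then yields $\frac1{\delta_{\ell,i}} < 2i + \tfrac{c_\ell}{2} + \tfrac12\log i = 2i + \tfrac{c_\ell+\log i}{2}$.

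Finally I would invert and linearize. Writing $x := \frac{c_\ell+\log i}{4i} > 0$ so that $2i(1+x) = 2i + \tfrac{c_\ell+\log i}{2}$, the previous step reads $\frac1{\delta_{\ell,i}} < 2i(1+x)$, hence $\delta_{\ell,i} > \frac1{2i}\cdot\frac1{1+x} > \frac1{2i}(1-x) = \frac1{2i} - \frac{c_\ell+\log i}{8i^2}$, using $\frac1{1+x}>1-x$ for $x>0$. Combined with the upper bound, this is exactly the claimed two-sided estimate.

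The argument is entirely elementary; the only points needing a little care are the justification that $\delta_{\ell,k}<\frac1{2k}$ may be invoked for every $k\geq 1$ (which is where \Cref{prop: eigenvector basis deterministic observation} is used) and the degenerate case $i=1$ in the harmonic-sum bound. I do not anticipate any real obstacle beyond bookkeeping of the constant $c_\ell$; in particular the constant can be taken explicitly as $2\big(\tfrac1{\delta_{\ell,0}}+\delta_{\ell,0}+1\big)$.
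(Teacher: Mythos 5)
Your proposal is correct and follows essentially the same route as the paper's proof: the upper bound is read off directly from the reciprocal recurrence of \Cref{lem: reciprocal recurrence}, the lower bound is obtained by feeding that upper bound back into the sum, invoking \Cref{fact: reciprocal sum bounds}, and linearizing $\frac1{1+x}\geq 1-x$, with the identical constant $c_\ell = 2(\delta_{\ell,0}^{-1}+\delta_{\ell,0}+1)$. No gaps.
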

\begin{proof}
    From \Cref{lem: reciprocal recurrence} we have $\frac1{\delta_{\ell,i}}>2i$ for $i\geq1$ and therefore $\delta_{\ell,i}<\frac1{2i}$. Then, we can bound the sum $\sum_{k=0}^{i-1}\delta_{\ell,k}\leq \delta_{\ell,0} + \sum_{k=1}^{i-1}\frac1{2k}$ and apply \Cref{fact: reciprocal sum bounds} to obtain
    \begin{align}\label{eq: log i sum bound}
        \sum_{k=0}^{i-1}\delta_{\ell,k} \leq \delta_{\ell,0} + 1 + \frac12\log i. \tag{\#}
    \end{align}
    Using this bound in the recurrence from \Cref{lem: reciprocal recurrence} yields
    \begin{align*}
        \frac1{\delta_{\ell,i}}  <  2i + ({\delta_{\ell,0}}^{-1} + \delta_{\ell,0} + 1) + \frac12\log i.
    \end{align*}
Let $c_\ell/2 = ({\delta_{\ell,0}}^{-1} + \delta_{\ell,0} + 1)$, so that $c_\ell$ is a constant independent of $i$.
    Then,
    \begin{align*}
        \delta_{\ell,i}>\frac1{2i+\frac{c_\ell + \log i}{2}} = \frac1{2i}\left(\frac{1}{1+\frac{c_\ell + \log i}{4i}}\right) \geq \frac1{2i}\left(1-\frac{c_\ell+\log i}{4i}\right)= \frac1{2i}-\frac{c_\ell+\log i}{8i^2}.
    \end{align*}
\end{proof}

\begin{corollary}
    As $i\to\infty$, the nonzero eigenvalues of~\eqref{eq: observation GEV} satisfy $\delta_{\ell,i}\sim \frac1{2i}$.
\end{corollary}
\begin{proof}
    From~\Cref{lem: reciprocal recurrence},
    \begin{align*}
        \lim_{i\to\infty} \left(\frac1{2i}\bigg/\delta_{\ell,i}\right) = \lim_{i\to\infty} \frac1{2i \delta_{\ell,0}} + 1 + \lim_{i\to\infty}\frac1{2i}\sum_{k=0}^{i-1}\delta_{\ell,k}.
    \end{align*}
    The first limit on the right hand side is evidently zero. Using \eqref{eq: log i sum bound}, the third term on the right hand side can be bounded as $0<\frac1{2i}\sum_{k=0}^{i-1}\delta_{\ell,k} < \frac1{2i}(\delta_{\ell,0}+1+\frac12\log{i})\rightarrow 0$,
    as $i\to\infty$.  Thus, $\lim_{i\to\infty}\frac1{2i} / \delta_{\ell,i}=1$.
\end{proof}

We can now prove our main result about the behavior of the EKI data misfit in the observation space $\R^n$. We use $\|\cdot\|$ to denote the Euclidean norm.

\begin{theorem}\label{thm: deterministic observation particles}
    For all particles $j=1,2,\ldots,J$, the following hold: 
    \begin{enumerate}[label=(\alph*)]
        \item as $i\to\infty$, $\|\bcP\btheta_i^{(j)}\|=\mathcal{O}(i^{-\frac12})$, 
        \item for all $i\geq0$, $\bcQ\btheta_i^{(j)}=\bcQ\btheta_0^{(j)}$, and
        \item for all $i\geq0$,  $\bcN\btheta_i^{(j)}=\bcN\btheta_0^{(j)}$.
    \end{enumerate}
\end{theorem}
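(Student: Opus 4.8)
The plan is to exploit the fact that, in the $\bSigma$-orthonormal eigenbasis $\{\bw_1,\dots,\bw_n\}$ of \eqref{eq: observation GEV}, the misfit map $\bcalM_i$ is \emph{simultaneously} diagonalized for every $i$. Concretely, using the representation $\bcalM_i = \bSigma\bW(\bI_n+\bDelta_i)^{-1}\bW^\top$ together with $\bW^\top\bSigma\bW=\bI_n$, one checks that $\bcalM_i(\bSigma\bw_\ell) = (1+\delta_{\ell,i})^{-1}\,\bSigma\bw_\ell$, so each $\bSigma\bw_\ell$ is a right eigenvector of $\bcalM_i$ with eigenvalue $(1+\delta_{\ell,i})^{-1}$. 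Since $\Ran(\bcQ)=\textsf{span}(\bSigma\bw_{r+1},\dots,\bSigma\bw_h)$ and $\Ran(\bcN)=\textsf{span}(\bSigma\bw_{h+1},\dots,\bSigma\bw_n)$ by \Cref{def: observation projectors}, and by \Cref{prop: eigenvector basis deterministic observation} the eigenvalues attached to these eigenvectors are identically zero ($\delta_{\ell,i}=0$ for $\ell>r$ and all $i$), the operator $\bcalM_i$ restricts to the identity on both $\Ran(\bcQ)$ and $\Ran(\bcN)$.

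With this in hand, parts (b) and (c) are a one-line induction. Using that $\bcQ$ commutes with $\bcalM_i$ (\Cref{def: observation projectors}) and the misfit iteration \eqref{eq: deterministic misfit iteration}, $\bcQ\btheta_{i+1}^{(j)} = \bcQ\bcalM_i\btheta_i^{(j)} = \bcalM_i\bcQ\btheta_i^{(j)} = \bcQ\btheta_i^{(j)}$, the last equality because $\bcQ\btheta_i^{(j)}\in\Ran(\bcQ)$, on which $\bcalM_i$ acts as the identity; hence $\bcQ\btheta_i^{(j)}=\bcQ\btheta_0^{(j)}$ for all $i\geq 0$. The argument for $\bcN$ is verbatim the same.

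For part (a), expand the initial misfit in the eigenbasis, $\btheta_0^{(j)} = \sum_{\ell=1}^n c_\ell^{(j)}\bSigma\bw_\ell$, so that $\bcP\btheta_0^{(j)} = \sum_{\ell=1}^r c_\ell^{(j)}\bSigma\bw_\ell$; iterating the diagonal action of $\bcalM_i$ on $\Ran(\bcP)$ gives $\bcP\btheta_i^{(j)} = \sum_{\ell=1}^r c_\ell^{(j)}\bigl(\prod_{k=0}^{i-1}(1+\delta_{\ell,k})^{-1}\bigr)\bSigma\bw_\ell$. The key step is to telescope this product: the recurrence $\delta_{\ell,k+1}=\delta_{\ell,k}/(1+\delta_{\ell,k})^2$ from \Cref{prop: observation GEV behavior} rearranges to $1+\delta_{\ell,k} = \sqrt{\delta_{\ell,k}/\delta_{\ell,k+1}}$ (valid since for $\ell\leq r$ the eigenvalues stay positive by \Cref{prop: eigenvector basis deterministic observation}), whence $\prod_{k=0}^{i-1}(1+\delta_{\ell,k})^{-1} = \sqrt{\delta_{\ell,i}/\delta_{\ell,0}}$. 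Now \Cref{prop: deterministic eigenvalue bounds} supplies $\delta_{\ell,i}<1/(2i)$ for each $\ell\leq r$, so each coefficient is bounded by $|c_\ell^{(j)}|(2i\,\delta_{\ell,0})^{-1/2}$, and the triangle inequality yields $\|\bcP\btheta_i^{(j)}\| \leq \sum_{\ell=1}^r |c_\ell^{(j)}|(2i\,\delta_{\ell,0})^{-1/2}\|\bSigma\bw_\ell\| = \mathcal{O}(i^{-1/2})$.

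I do not anticipate a genuine obstacle: all of the analytic work (the eigenvalue recurrence, its reciprocal form, and the $1/(2i)$ asymptotics) is already done in \Cref{prop: observation GEV behavior,lem: reciprocal recurrence,prop: deterministic eigenvalue bounds}. The one point requiring care is that $\bcalM_i$ is \emph{not} symmetric, so one must keep track that its eigenvectors are the $\bSigma\bw_\ell$ (not the $\bw_\ell$) and that the ranges of the oblique projectors $\bcP,\bcQ,\bcN$ are exactly the spans of the appropriate $\bSigma\bw_\ell$; once that is pinned down, the remainder is bookkeeping.
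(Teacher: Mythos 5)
Your proof is correct, and while the overall scaffolding (eigenbasis of \eqref{eq: observation GEV}, spectral projectors, invariance of $\Ran(\bcQ)$ and $\Ran(\bcN)$) matches the paper --- parts (b) and (c) are essentially identical to the paper's argument --- your treatment of part (a) takes a genuinely different and cleaner analytic route. The paper bounds the decay factor $d_{r,i}=\prod_k(1+\delta_{r,k})^{-1}$ by estimating $-\log d_{r,i}=\sum_k\log(1+\delta_{r,k})$ from below, which requires the inequality $\log(1+x)>x-\tfrac12x^2$, the \emph{lower} bound $\delta_{\ell,i}>\tfrac1{2i}-\tfrac{c_\ell+\log i}{8i^2}$ from \Cref{prop: deterministic eigenvalue bounds}, and the harmonic-sum estimate of \Cref{fact: reciprocal sum bounds}. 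You instead observe that the eigenvalue recurrence $\delta_{\ell,k+1}=\delta_{\ell,k}/(1+\delta_{\ell,k})^2$ rearranges to $(1+\delta_{\ell,k})^{-1}=\sqrt{\delta_{\ell,k+1}/\delta_{\ell,k}}$, so the product telescopes to the \emph{exact} identity $\prod_{k=0}^{i-1}(1+\delta_{\ell,k})^{-1}=\sqrt{\delta_{\ell,i}/\delta_{\ell,0}}$; the rate then follows from only the elementary upper bound $\delta_{\ell,i}<1/(2i)$ (which is immediate from \Cref{lem: reciprocal recurrence}), bypassing the logarithmic estimates and the finer lower eigenvalue bound entirely. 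This buys a shorter proof with an explicit constant, $\|\bcP\btheta_i^{(j)}\|\leq\sum_{\ell=1}^r|c_\ell^{(j)}|\,(2i\,\delta_{\ell,0})^{-1/2}\|\bSigma\bw_\ell\|$, and makes transparent that the misfit decay in each populated observable direction is governed exactly by $\sqrt{\delta_{\ell,i}/\delta_{\ell,0}}$; the paper's approach, by contrast, produces machinery (the lower bound of \Cref{prop: deterministic eigenvalue bounds}) that certifies the $1/\sqrt{i}$ rate is not an overestimate. Your closing caveats are well taken: the right eigenvectors of the nonsymmetric $\bcalM_i$ are the $\bSigma\bw_\ell$, and the positivity $\delta_{\ell,k}>0$ for $\ell\leq r$ (needed to take the square root in the telescoping step) is guaranteed by \Cref{prop: eigenvector basis deterministic observation}.
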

\begin{proof}
    Note that $\bcalM_i = \bGamma\bW_{1:r}(\bI_r+\bDelta_{1:r}^{(i)})^{-1}\bW_{1:r}^\top +\bcQ + \bcN$. 
    Define $\bcalM_{i0}=\bcalM_i\cdots\bcalM_2\bcalM_1\bcalM_0$ so that $\btheta_i^{(j)} =\bcalM_{i0}\btheta_0^{(j)}$, and note that
    \begin{align*}
        \bcalM_{i0} =\bGamma\bW_{1:r}\bD_{i0}\bW_{1:r}^\top + \bcQ + \bcN,
        \quad \text{where}\quad
        \bD_{i0} = \prod_{k=0}^i(\bI_r+\bDelta_{1:r}^{(k)})^{-1}.
    \end{align*}
    Then, for all $i$, $\bcQ\bcalM_{i0} = \bcQ$ and $\bcN\bcalM_{i0}=\bcN$, and the last two statements of \Cref{thm: deterministic observation particles} follow.
    Additionally, 
    \begin{align*}
        \bcP\btheta_i^{(j)} &=\bcP\bcalM_{i0}\btheta_0^{(j)} 
        = \bcalM_{i0}\bcP\btheta_0^{(j)}=\bGamma\bW_{1:r}\bD_{i0}\bW_{1:r}^\top\bcP\btheta_0^{(j)}.
    \end{align*}
    Denote the diagonal entries of $\bD_{i0}$ as $d_{\ell,i}=\prod_{k=0}^i(1+\delta_{\ell,k})^{-1}$ for $1\leq\ell\leq r$. Then, 
    \begin{align*}
        \|\bcP\btheta_i^{(j)}\|\leq d_{r,i}\|\bGamma\bW_{1:r}\|\|\bW_{1:r}\|\|\bcP\btheta_0^{(j)}\|.
    \end{align*}
    For each particle $j$, the quantity $\|\bGamma\bW_{1:r}\|\|\bW_{1:r}\|\|\bcP\btheta_0^{(j)}\|$ is a constant independent of $i$, so we now need only show that $d_{r,i}=\mathcal{O}(i^{-\frac12})$.
    Note from the definition that $-\log d_{r,i}=\sum_{k=0}^i\log(1+\delta_{r,k})$.
    Since $\log(1+x)>x-\frac12x^2$ for $x\in(0,1)$, we have
    \begin{align*}
        -\log d_{r,i}> \log(1+\delta_{r,0}) + \sum_{k=1}^i \left(\delta_{r,k} - \frac12\delta_{r,k}^2\right).
    \end{align*}
    Moreover, since $x-\frac12x^2$ is monotone increasing on $(0,1)$,  \Cref{prop: deterministic eigenvalue bounds} implies
    \begin{align*}
        -\log d_{r,i}&> \log(1+\delta_{r,0}) + \sum_{k=1}^i \left(\frac1{2k}-\frac{c_r + \log k}{8k^2} - \frac12\left(\frac1{2k}-\frac{c_r + \log k}{8k^2}\right)^2 \right),\\
        &>\log(1+\delta_{r,0}) +\sum_{k=1}^i\frac1{2k} - \sum_{k=1}^\infty \left(\frac{c_r + \log k}{8k^2} + \frac12\left(\frac1{2k}-\frac{c_r + \log k}{8k^2}\right)^2\right)
    \end{align*}
    Note that all terms of the final summand converge at least as fast as $\frac{\log k}{k^2}$, so the final sum must converge to some finite constant $c_\infty$. Define $C=\log(1+\delta_{r,0})-c_\infty$.  
    Combining with the lower bound on the harmonic sum $\sum_{k=1}^i\frac1{2k}$ from \Cref{fact: reciprocal sum bounds}, yields
\begin{align*}-\log d_{r,i} > C + \frac12\log i\implies 
        d_{r,i} < \frac{\exp(-C)}{\sqrt{i}}=\frac{\exp(c_\infty)}{1+\delta_{r,0}}\frac{1}{\sqrt{i}}.
\end{align*}
\end{proof}

\begin{corollary}\label{cor: obs particle convergence deterministic}
    For each particle $j  = 1,2,\ldots, J$, the image of the particle under $\bH$, $\bh_i^{(j)}\equiv\bH\bv_i^{(j)}$, satisfies
    \begin{align*}
        \lim_{i\to\infty}\bh_i^{(j)}= \bcP\by + \bcQ\bh_0^{(j)} + \bcN\bh_0^{(j)}.
    \end{align*}
\end{corollary}
Thus, the observation-space particle converges to $\by$ in its $\bcP$-component, while its $\bcQ$- and $\bcN$-components remain at their initial values.

\paragraph{Related work}
Previous analyses of linear deterministic EKI have divided the misfit into just two components, corresponding to our $\bcP$ and its complementary projection under the assumption that $\bH$ is one-to-one~\cite{schillings2017analysis,schillings2018convergence}, or to our $\bcP$ and $\bcN$  under the assumption that $\bC_i$ is full-rank~\cite{bungert2023complete}.
\Cref{thm: deterministic observation particles} unifies these results in the general case where both $\bH$ and $\bC_i$ may be rank-deficient, leading to the definition of three fundamental subspaces: $\Ran(\bcP)$, comprised of populated observable directions, in which the data misfit converges at a $1/\sqrt{i}$ rate; and two subspaces in which the data misfit remains constant: directions in $\Ran(\bcQ)$ are observable but unpopulated by the ensemble, while directions in $\Ran(\bcN)$ are simply unobservable.

\subsection{Deterministic EKI: Analysis in state space $\R^d$}\label{ssec: deterministic state space}
We now consider the evolution of the state-space least squares residual of the particles with respect to the standard minimum-norm solution~\eqref{eq: ls solution}. \Cref{sssec: deterministic LS residual iteration} derives an iteration map for this residual, which motivates a spectral analysis in \Cref{sssec: deterministic spectral state space} that defines three fundamental subspaces of EKI in state space analogous to those defined in observation space. \Cref{sssec: deterministic state convergence} provides a convergence analysis that describes the behavior of the EKI particles in the three fundamental subspaces.

\subsubsection{Deterministic EKI: The least-squares residual iteration}\label{sssec: deterministic LS residual iteration}
We first show the well-known `subspace property' of basic EKI~\cite{blomker2019well,bungert2023complete,iglesias2013ensemble,schillings2017analysis}:
that is, across all iterations particles always remain in the subspace spanned by the initial ensemble.
\begin{proposition}\label{prop: subspace property}
For all $i\geq 0$, $\Ran(\bC_{i+1})\subset \Ran(\bC_i)$. 
Additionally, define $\mathbf{V}_i=[\bv_i^{(1)},\bv_i^{(2)},\ldots,\bv_i^{(J)}]\in\mathbb{R}^{d\times J}$. Then $\Ran(\bV_{i+1})\subset\Ran(\bV_i)\subset\Ran(\bV_0)$.
\end{proposition}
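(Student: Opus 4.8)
The plan is to exploit the fact that the deterministic EKI update adds to each particle a correction vector that lies in $\Ran(\bGamma_i)$. First I would write out the update with $\by_i^{(j)}=\by$, namely $\bv_{i+1}^{(j)} = \bv_i^{(j)} + \bGamma_i\bH^\top(\bH\bGamma_i\bH^\top+\bSigma)^{-1}(\by - \bH\bv_i^{(j)})$, and note that the correction is $\bGamma_i$ times a vector and therefore lies in $\Ran(\bGamma_i)$. Since $\bGamma_i = \frac{1}{J-1}\sum_{j}(\bv_i^{(j)}-\sfE[\bv_i^{(1:J)}])(\bv_i^{(j)}-\sfE[\bv_i^{(1:J)}])^\top$, its range equals $\textsf{span}\{\bv_i^{(j)}-\sfE[\bv_i^{(1:J)}]\}$, which is a subspace of $\Ran(\bV_i)=\textsf{span}\{\bv_i^{(j)}\}$. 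Hence each $\bv_{i+1}^{(j)} \in \Ran(\bV_i) + \Ran(\bGamma_i) = \Ran(\bV_i)$, which gives $\Ran(\bV_{i+1})\subset\Ran(\bV_i)$; the chain $\Ran(\bV_{i+1})\subset\Ran(\bV_i)\subset\cdots\subset\Ran(\bV_0)$ then follows by an immediate induction on $i$.

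For the covariance containment I would track the \emph{centered} particles. Subtracting the mean update $\sfE[\bv_{i+1}^{(1:J)}] = \sfE[\bv_i^{(1:J)}] + \bK_i(\by - \bH\,\sfE[\bv_i^{(1:J)}])$ from the particle update yields $\bv_{i+1}^{(j)} - \sfE[\bv_{i+1}^{(1:J)}] = (\bI - \bK_i\bH)(\bv_i^{(j)} - \sfE[\bv_i^{(1:J)}])$, where $\bK_i = \bGamma_i\bH^\top(\bH\bGamma_i\bH^\top+\bSigma)^{-1}$; equivalently, $\bGamma_{i+1} = (\bI-\bK_i\bH)\bGamma_i(\bI-\bK_i\bH)^\top$. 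Each centered particle $\bv_i^{(j)} - \sfE[\bv_i^{(1:J)}]$ on the right already lies in $\Ran(\bGamma_i)$, and $\bK_i\bH$ maps everything into $\Ran(\bGamma_i)$, so $(\bI-\bK_i\bH)$ maps $\Ran(\bGamma_i)$ into itself; therefore $\bv_{i+1}^{(j)} - \sfE[\bv_{i+1}^{(1:J)}] \in \Ran(\bGamma_i)$ for every $j$, and consequently $\Ran(\bGamma_{i+1}) = \textsf{span}\{\bv_{i+1}^{(j)} - \sfE[\bv_{i+1}^{(1:J)}]\} \subset \Ran(\bGamma_i)$.

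I do not anticipate a genuine obstacle: this is essentially the observation underpinning the classical subspace property, and everything reduces to noting that the Kalman gain factors through $\bGamma_i$. The one point that warrants care is not conflating $\Ran(\bGamma_i)$ with $\Ran(\bV_i)$ — the former is the span of the mean-subtracted particles and can be strictly smaller — so the two containments should be argued separately as above rather than derived from a single (false) identity $\Ran(\bGamma_i)=\Ran(\bV_i)$.
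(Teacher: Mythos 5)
Your proof is correct and follows essentially the same route as the paper's: both arguments rest on the observation that the Kalman gain factors through $\bGamma_i$, so the update correction lies in $\Ran(\bGamma_i)\subset\Ran(\bV_i)$, and that centering the particles yields $\bGamma_{i+1}=(\bI-\bK_i\bH)\bGamma_i(\bI-\bK_i\bH)^\top$ — the paper phrases the same two steps by postmultiplying the matrix form of the update by the centering projector $\boldsymbol{\Pi}=\frac1J\mathbf{e}\mathbf{e}^\top$ and by $\bI-\boldsymbol{\Pi}$. The only cosmetic difference is that you fix $\by_i^{(j)}=\by$, whereas the paper keeps the observations $\by_i^{(j)}$ general so that the identical argument covers stochastic EKI as well; your argument extends verbatim since the perturbed correction still factors through $\bGamma_i$.
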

\begin{proof}
Define $\mathbf{e}=[1,1,\ldots,1]^\top\in\mathbb{R}^J$ and $\boldsymbol{\Pi}=\frac1J\mathbf{e}\mathbf{e}^\top$.  Then $\boldsymbol{\Pi}$ is an orthogonal projection, $\mathbf{V}_i\boldsymbol{\Pi}=\sfE[\bv_{i}^{(1:J)}]$, and $\bC_i=\frac{1}{J-1}\mathbf{V}_i(\bI-\boldsymbol{\Pi})\mathbf{V}_i^\top$, so that $\Ran(\bC_i)=\Ran(\mathbf{V}_i(\bI-\boldsymbol{\Pi}))$.  Write $\mathbf{Y}_i=[\by_i^{(1)},\by_i^{(2)},\ldots,\by_i^{(J)}]\in\mathbb{R}^{d\times J}$ and express  the EKI update~\eqref{eq: EKI update} as
$\mathbf{V}_{i+1} = \mathbf{V}_i+\bC_i\bH^\top(\bH\bC_i\bH^\top +\bGamma)^{-1}(\mathbf{Y}_i-\bH\mathbf{V}_i).$
Postmultiplying by $(\bI-\boldsymbol{\Pi})$ 
leads to the first conclusion:  
\begin{align*}
\Ran(\bC_{i+1})=\Ran(\mathbf{V}_{i+1}(\bI-\boldsymbol{\Pi}))\subset \Ran(\mathbf{V}_{i}(\bI-\boldsymbol{\Pi})) =\Ran(\bC_i)\subset \Ran(\bV_i). 
\end{align*}
Postmultiplying instead by $\boldsymbol{\Pi}$ 
leads to $\sfE[\bv_{i+1}^{(1:J)}]=\mathbf{V}_{i+1}\boldsymbol{\Pi}\in\Ran(\mathbf{V}_{i})$, so we have 
\begin{align*}
\bv_{i+1}^{(j)}\in\sfE[\bv_{i}^{(1:J)}]+\Ran(\bC_i)\subset\Ran(\mathbf{V}_{i}),
\end{align*}
which yields the second conclusion.
\end{proof}

\begin{remark}
    Note that \Cref{prop: subspace property} holds for both the deterministic and stochastic versions of basic EKI (\Cref{alg:EKI}), although the remainder of this section focuses solely on the deterministic case.
\end{remark}

Denote by $\bomega_i^{(j)}=\bv_i^{(j)}-\bv^*$ the residual between the $j$th particle and the minimum-norm least squares solution~\eqref{eq: ls solution}. We now show an evolution map for this residual. 

\begin{proposition} \label{prop_statespace_convergence}
    For $i\geq 0$, $\bomega_{i+1}^{(j)} = \bbM_i\bomega_i^{(j)}$ with $\bbM_i=(\bI+\bC_i\bH^\top\bGamma^{-1}\bH)^{-1}$.
\end{proposition}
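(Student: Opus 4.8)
The plan is to start from the deterministic EKI update \eqref{eq: EKI update} (with $\by_i^{(j)}=\by$), subtract $\bv^*$ from both sides, and then reorganize the right-hand side using two ingredients: a push-through identity that rewrites the Kalman gain, and the weighted normal equations satisfied by $\bv^*$.

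First I would rewrite the Kalman gain. Factoring $\bH\bGamma_i\bH^\top+\bSigma = \bSigma(\bI_n+\bSigma^{-1}\bH\bGamma_i\bH^\top)$ and applying the push-through identity $\bX(\bI+\bY\bX)^{-1}=(\bI+\bX\bY)^{-1}\bX$ with $\bX=\bGamma_i\bH^\top$ and $\bY=\bSigma^{-1}\bH$ yields
\begin{align*}
\bGamma_i\bH^\top(\bH\bGamma_i\bH^\top+\bSigma)^{-1} = (\bI_d+\bGamma_i\fisher)^{-1}\bGamma_i\bH^\top\bSigma^{-1} = \bbM_i\,\bGamma_i\bH^\top\bSigma^{-1}.
\end{align*}
This requires knowing $\bI_d+\bGamma_i\fisher$ is invertible even when $\bGamma_i$ is singular: since $\bGamma_i\fisher$ has the same nonzero eigenvalues as the symmetric positive semidefinite matrix $\bGamma_i^{1/2}\fisher\,\bGamma_i^{1/2}$ (with any remaining eigenvalues equal to zero), all eigenvalues of $\bGamma_i\fisher$ are real and nonnegative, so $\bbM_i$ is well-defined with eigenvalues in $(0,1]$. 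I would be careful to invoke the push-through identity in exactly this form, which needs invertibility of $\bI_d+\bX\bY$ and $\bI_n+\bY\bX$ but not of $\bGamma_i$ itself.

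Next I would handle the data term. Writing $\by-\bH\bv_i^{(j)} = (\by-\bH\bv^*) - \bH\bomega_i^{(j)}$ and using that $\bv^*$ from \eqref{eq: ls solution} satisfies the weighted normal equations $\fisher\,\bv^* = \bH^\top\bSigma^{-1}\by$ — which holds because $(\fisher)(\fisher)^{\dagger}$ is the orthogonal projector onto $\Ran(\fisher)=\Ran(\bH^\top)$ and $\bH^\top\bSigma^{-1}\by\in\Ran(\bH^\top)$ — the term $\bH^\top\bSigma^{-1}(\by-\bH\bv^*)$ vanishes. Substituting into the $\bomega$-update gives $\bomega_{i+1}^{(j)} = \bomega_i^{(j)} - \bbM_i\bGamma_i\fisher\,\bomega_i^{(j)}$, and finally recognizing $\bomega_i^{(j)} = \bbM_i(\bI_d+\bGamma_i\fisher)\bomega_i^{(j)} = \bbM_i\bomega_i^{(j)} + \bbM_i\bGamma_i\fisher\,\bomega_i^{(j)}$ collapses the right-hand side to $\bbM_i\bomega_i^{(j)}$, as claimed.

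The only real subtlety — and the step I would treat most carefully — is the push-through rewriting of the Kalman gain together with the invertibility of $\bI_d+\bGamma_i\fisher$ in the rank-deficient case; everything after that is routine bookkeeping with the normal equations.
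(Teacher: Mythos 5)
Your proof is correct and follows essentially the same route as the paper's: both hinge on the identity $\bK_i=\bbM_i\,\bGamma_i\bH^\top\bSigma^{-1}$ (which the paper obtains by directly expanding $(\bI-\bK_i\bH)\bGamma_i\bH^\top\bSigma^{-1}=\bK_i$ rather than by invoking the push-through lemma), using it both to annihilate the least-squares misfit $\by-\bH\bv^*\in\Ker(\bH^\top\bSigma^{-1})$ and to identify $\bI-\bK_i\bH$ with $\bbM_i$. Your extra care about the invertibility of $\bI+\bGamma_i\fisher$ in the rank-deficient case and the normal-equations justification of $\bH^\top\bSigma^{-1}(\by-\bH\bv^*)=\mathbf{0}$ simply fills in details the paper leaves implicit.
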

\begin{proof}
Let $\br = \bH\bv^*-\by$ denote the least squares misfit and note that $\br\in\Ker(\bH^\top\bGamma^{-1})$. Then, subtracting $\bv^*$ from the EKI update yields
\begin{align*}
    \bomega_{i+1}^{(j)} = \bv_{i+1}^{(j)} - \bv^* = (\bI-\bK_i\bH)\bv_i^{(j)}-\bv^*+\bK_i\by = (\bI-\bK_i\bH)(\bv_i^{(j)} - \bv^*) - \bK_i\br.
\end{align*}
We show $\bK_i\br=\mathbf{0}$ and $(\bI-\bK_i\bH)=\bbM_i$. First, observe that
\begin{align*}
    (\bI-\bK_i\bH)\bC_i\bH^\top\bGamma^{-1} &= \bC_i\bH^\top\bGamma^{-1} -\bC_i\bH^\top(\bH\bC_i\bH^\top + \bGamma)^{-1} \bH\bC_i\bH^\top\bGamma^{-1}\\
    &= \bC_i\bH^\top (\bI - (\bH\bC_i\bH^\top + \bGamma)^{-1} \bH\bC_i\bH^\top)\bGamma^{-1}\\
    &=\bC_i\bH^\top(\bH\bC_i\bH^\top + \bGamma)^{-1} = \bK_i,
\end{align*}
which makes it evident that $\Ker(\bH^\top\bGamma^{-1})\subset \Ker(\bK_i)$, so $\bK_i\br=\mathbf{0}$. Finally note that the above calculation implies $\bK_i\bH(\bI+\bC_i\bH^\top\bGamma^{-1}\bH) =\bC_i\bH^\top\bGamma^{-1}\bH$ which we can rearrange to $\bK_i\bH=\bC_i\bH^\top\bGamma^{-1}\bH\bbM_i=\mathbf{I}-\bbM_i$, so $(\bI-\bK_i\bH)=\bbM_i$.
\end{proof}

\subsubsection{Deterministic EKI: Decomposition of state space $\R^d$}
\label{sssec: deterministic spectral state space}
A spectral analysis of $\bbM_i$ will distinguish three fundamental subspaces of the state space $\R^d$ that are invariant under $\bbM_i$. Consider the following eigenvalue problem:
\begin{equation}
\bC_{i}\,\bH^\top\,\bGamma^{-1}\,\bH\,\bu_{\ell, i}=\delta'_{\ell, i}\bu_{\ell ,i}, \quad \ell =1,\ldots,d. \label{eq:GEVstatespace}
\end{equation}
The state space eigenpairs $(\delta'_{\ell, i},\bu_{\ell ,i})$ of \eqref{eq:GEVstatespace} will be seen to be closely related to the observation space eigenpairs $(\delta_{\ell, i},\bw_\ell)$ of~\eqref{eq: observation GEV}: 

\begin{proposition}\label{prop: deterministic state eigen basis}
    Let $\{\bw_\ell\}_{\ell=1}^n$ denote eigenvectors of~\eqref{eq: observation GEV} ordered as described in \Cref{prop: eigenvector basis deterministic observation}, and recall that the leading $r$ eigenvectors correspond to positive eigenvalues $\delta_{\ell,i}$. For $\ell=1,\ldots,r$, define $\bu_\ell=\frac1{\delta_{\ell,i}}\bC_i\bH^\top\bw_\ell$. 
    If $h\equiv\rank(\bH)>r$, then for $\ell=r+1,\ldots,h$, define $\bu_\ell=\bH^+\bGamma\bw_\ell$.  Then for all $\ell\leq h$, $\bu_\ell$ is an eigenvector of~\eqref{eq:GEVstatespace} with eigenvalue $\delta'_{\ell, i}=\delta_{\ell,i}$. Conversely, for all $\ell\leq h$, $\bw_\ell = \bGamma^{-1}\bH\bu_\ell$.
\end{proposition}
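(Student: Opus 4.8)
The plan is to verify the eigenvector property separately for the two families of candidates and, in each case, to establish the converse identity $\bw_\ell=\bSigma^{-1}\bH\bu_\ell$; the converse does double duty, since it certifies each $\bu_\ell\neq\mathbf{0}$ (the $\bw_\ell$ being nonzero) and hence that $\bu_\ell$ is a genuine eigenvector of \eqref{eq:GEVstatespace}.

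For $\ell\le r$, I would start from the observation-space eigenrelation \eqref{eq: observation GEV}, rewritten as $\bSigma^{-1}\bH\bGamma_i\bH^\top\bw_\ell=\delta_{\ell,i}\bw_\ell$, and substitute $\bu_\ell=\delta_{\ell,i}^{-1}\bGamma_i\bH^\top\bw_\ell$ into $\bGamma_i\bH^\top\bSigma^{-1}\bH\bu_\ell$. The interior block $\bSigma^{-1}\bH\bGamma_i\bH^\top$ collapses against $\bw_\ell$, leaving $\bGamma_i\bH^\top\bSigma^{-1}\bH\bu_\ell=\bGamma_i\bH^\top\bw_\ell=\delta_{\ell,i}\bu_\ell$. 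The converse follows from the same collapse: $\bSigma^{-1}\bH\bu_\ell=\delta_{\ell,i}^{-1}\bSigma^{-1}\bH\bGamma_i\bH^\top\bw_\ell=\bw_\ell$. Nonzeroness of $\bu_\ell$ holds because $\delta_{\ell,i}>0$ and $\bw_\ell\neq\mathbf{0}$ force $\bH\bGamma_i\bH^\top\bw_\ell=\delta_{\ell,i}\bSigma\bw_\ell\neq\mathbf{0}$, so $\bGamma_i\bH^\top\bw_\ell\neq\mathbf{0}$.

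For $r<\ell\le h$ we have $\delta_{\ell,i}=0$, i.e.\ $\bH\bGamma_i\bH^\top\bw_\ell=\mathbf{0}$, and \Cref{prop: eigenvector basis deterministic observation} gives $\bw_\ell\in\Ran(\bSigma^{-1}\bH)$. Here I would use two facts about the weighted pseudoinverse. First, $(\bH^\top\bSigma^{-1}\bH)^\dagger(\bH^\top\bSigma^{-1}\bH)$ is the Euclidean-orthogonal projector onto $\Ran(\bH^\top\bSigma^{-1}\bH)=\Ran(\bH^\top)$; since $\bu_\ell=\bH^+\bSigma\bw_\ell=(\bH^\top\bSigma^{-1}\bH)^\dagger\bH^\top\bw_\ell$ and $\bH^\top\bw_\ell\in\Ran(\bH^\top)$, we get $\bH^\top\bSigma^{-1}\bH\bu_\ell=\bH^\top\bw_\ell$ and hence $\bGamma_i\bH^\top\bSigma^{-1}\bH\bu_\ell=\bGamma_i\bH^\top\bw_\ell$. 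To see this vanishes (matching $\delta_{\ell,i}\bu_\ell=\mathbf{0}$), I would invoke positive semidefiniteness of $\bGamma_i$: from $\bw_\ell^\top\bH\bGamma_i\bH^\top\bw_\ell=0$ and $\bGamma_i\succeq0$ we get $\bGamma_i^{1/2}\bH^\top\bw_\ell=\mathbf{0}$, hence $\bGamma_i\bH^\top\bw_\ell=\mathbf{0}$. Second, $\bH\bH^+$ is the $\bSigma^{-1}$-orthogonal projector onto $\Ran(\bH)$ (it fixes $\Ran(\bH)$ and annihilates $\Ker(\bH^\top\bSigma^{-1})$); since $\bw_\ell\in\Ran(\bSigma^{-1}\bH)$ means $\bSigma\bw_\ell\in\Ran(\bH)$, this yields $\bSigma^{-1}\bH\bu_\ell=\bSigma^{-1}\bH\bH^+\bSigma\bw_\ell=\bw_\ell$, which is the converse and again shows $\bu_\ell\neq\mathbf{0}$.

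The main obstacle is the $r<\ell\le h$ case: it is the only place where the pseudoinverse structure of $\bH^+$ and the positive semidefiniteness of $\bGamma_i$ genuinely enter, and care is needed to state and apply the two projector facts correctly (one in the Euclidean geometry on $\R^d$, one in the $\bSigma^{-1}$-geometry on $\R^n$) and to confirm that the candidate $\bu_\ell$ is genuinely nonzero. The $\ell\le r$ case is a short rearrangement of \eqref{eq: observation GEV}.
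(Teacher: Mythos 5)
Your verification of the two eigenrelations and of the converse identity $\bw_\ell=\bSigma^{-1}\bH\bu_\ell$ is correct, and in the case $r<\ell\le h$ it is actually more careful than the paper, which dismisses this as ``direct substitution'': your use of $(\fisher)^\dagger(\fisher)$ as the Euclidean projector onto $\Ran(\bH^\top)$, of $\bH\bH^+$ as the $\bSigma^{-1}$-orthogonal projector onto $\Ran(\bH)$, and of $\bGamma_i\succeq 0$ to upgrade $\bH\bGamma_i\bH^\top\bw_\ell=\mathbf{0}$ to $\bGamma_i\bH^\top\bw_\ell=\mathbf{0}$ are exactly the right ingredients, and the nonzeroness checks are welcome.

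There is, however, a genuine gap: for $\ell\le r$ the proposition defines $\bu_\ell=\frac{1}{\delta_{\ell,i}}\bGamma_i\bH^\top\bw_\ell$ with $i$ appearing on the right but not on the left, so part of the claim --- and the part to which the paper devotes its entire displayed computation --- is that this vector is \emph{independent of the iteration index} $i$. You never address this. It does not follow from what you prove: the converse relation $\bw_\ell=\bSigma^{-1}\bH\bu_\ell$ cannot pin $\bu_\ell$ down across iterations because $\bSigma^{-1}\bH$ has a nontrivial kernel, and ``being an eigenvector of $\bGamma_i\fisher$ for eigenvalue $\delta_{\ell,i}$'' is a statement about the $i$th problem only. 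The constancy is essential downstream, since the projectors $\bbP,\bbQ,\bbN$ of \Cref{prop: det state spec proj} are built from a single, iteration-independent matrix $\bU$ and must commute with \emph{every} $\bbM_i$ for \Cref{thm: deterministic state space convergence} to hold. To close the gap, use $\bGamma_{i+1}=(\bI-\bK_i\bH)\bGamma_i(\bI-\bK_i\bH)^\top$ and the identity $(\bH\bGamma_i\bH^\top+\bSigma)^{-1}\bH\bGamma_i\bH^\top\bw_\ell=\frac{\delta_{\ell,i}}{1+\delta_{\ell,i}}\bw_\ell$ to compute
\begin{align*}
\bGamma_{i+1}\bH^\top\bw_\ell=\bbM_i\,\bGamma_i\bigl(\bI-\bH^\top(\bH\bGamma_i\bH^\top+\bSigma)^{-1}\bH\bGamma_i\bigr)\bH^\top\bw_\ell=\frac{1}{1+\delta_{\ell,i}}\bbM_i\,\bGamma_i\bH^\top\bw_\ell=\frac{\delta_{\ell,i}}{(1+\delta_{\ell,i})^2}\,\bu_\ell=\delta_{\ell,i+1}\bu_\ell,
\end{align*}
which shows $\frac{1}{\delta_{\ell,i+1}}\bGamma_{i+1}\bH^\top\bw_\ell=\frac{1}{\delta_{\ell,i}}\bGamma_i\bH^\top\bw_\ell$; equivalently, one can argue as the paper does that any eigenvector of \eqref{eq:GEVstatespace} at step $i$ remains an eigenvector at step $i+1$ with eigenvalue $\delta_{\ell,i}/(1+\delta_{\ell,i})^2$. (For $r<\ell\le h$ your definition $\bu_\ell=\bH^+\bSigma\bw_\ell$ is manifestly $i$-independent, so no issue arises there.)
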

\begin{proof}
    One may verify the relationships between $\bw_\ell$ and $\bu_\ell$ by direct substitution, recalling that $\bH\bH^+$ is the $\bGamma^{-1}$-orthogonal projection onto $\Ran(\bH)$ and that for $\ell\leq h$, $\bw_\ell\in\Ran(\bGamma^{-1}\bH)$ or equivalently,  $\bGamma\bw_\ell\in\Ran(\bH)$.
    We need still to confirm that the eigenvectors $\bu_\ell$ are constant with respect to the iteration index,  $i$. Recalling $\bbM_i = (\bI-\bK_i\bH)$ and assuming $(\delta_{\ell,i}',\bu_\ell)$ satisfies~\eqref{eq:GEVstatespace}, we have
    \begin{align*}
\bC_{i+1}\fisher\bu_\ell &= (\bI-\bK_i\bH)\bC_i(\bI-\bK_i\bH)^\top\fisher\bu_\ell\\
        &=\bbM_i\bC_i(\bI-\bH^\top(\bH\bC_i\bH^\top+\bGamma)^{-1}\bH\bC_i)\fisher\bu_\ell\\
        &=\bbM_i(\bI- \bK_i\bH)\bC_i\fisher\bu_\ell = \bbM_i^2 \delta_{\ell,i}'\bu_\ell\\
        &=(\bI+\bC_i\fisher)^{-2}\delta_{\ell,i}'\bu_\ell = \delta_{\ell,i}'/{(1+\delta_{\ell,i}')^2}\bu_\ell= \delta_{\ell,i+1}'\bu_\ell.
    \end{align*}
\end{proof}

\Cref{prop: deterministic state eigen basis} defines $h$ state space eigenvectors of~\eqref{eq:GEVstatespace} in terms of observation space eigenvectors of~\eqref{eq: observation GEV} and shows them to be associated with the same  eigenvalues (both zero and nonzero). 
\Cref{prop: eigenvector basis deterministic observation} states that the remaining $n-h$ eigenvalues of \eqref{eq: observation GEV} are zero.
Note that
\eqref{eq:GEVstatespace} must have exactly $r$ nonzero eigenvalues because the nonzero eigenvalues of $(\bC_i\bH^\top)(\bGamma^{-1}\bH)$ must be identical to those of $(\bGamma^{-1}\bH)(\bC_i\bH^\top)$, whereas zero eigenvalues of either matrix (when they exist) may vary only in multiplicity as determined by dimension (see e.g., \cite[Theorem 1.3.22]{HornJohnson2012MatrixAnalysis}). Thus, the remaining $d-h$ eigenvalues that have been left unspecified in \Cref{prop: deterministic state eigen basis} must also be zero. 

Going forward, we will drop the notational distinction between the eigenvalues of~\eqref{eq: observation GEV} and of~\eqref{eq:GEVstatespace}, i.e., we take $\delta'_{\ell,i} = \delta_{\ell,i}$ and constrain the eigenvalue index to $\ell=1,2,\ldots, h$. Let $\bDelta_{1:h}^{(i)}=\diag(\delta_{1,i},\ldots,\delta_{h,i})$ and $\bU=[\bu_1,\dots,\bu_h]$. Then, $\bC_i\fisher\bU = \bU\bDelta_{1:h}^{(i)}$, and the weighted orthonormalization $\bW^\top\bGamma\bW=\bI_n$ implies $\bU^\top\fisher\bU=\bI_h$.

We now define the three fundamental subspaces of EKI in the state space $\R^d$ as specific invariant subspaces of the iteration map for the state space residual, $\bbM_i=(\bI+\bC_i\bH^\top\bGamma^{-1}\bH)^{-1}$,  characterized through appropriately defined spectral projectors.  

 \begin{proposition}\label{prop: det state spec proj}
     Let 
     $\bU_{k:\ell}\in\R^{d\times(\ell-k+1)}$ contain columns $k$ through $\ell$  of $\bU$. Define
       $\bbP = \bU_{1:r}\bU_{1:r}^\top(\fisher)$,  $\bbQ = \bU_{r+1:h}\bU_{r+1:h}^\top(\fisher)$, and  $\bbN = \bI - \bbP-\bbQ$.
    Then $\bbP$, $\bbQ$, and $\bbN$ are spectral projectors for the residual iteration map $\bbM_i$, i.e., $\bbM_i\bbP=\bbP\bbM_i$ and $\bbP^2 = \bbP$, with similar assertions for $\bbQ$ and $\bbN$. $\bbP$, $\bbQ$, and $\bbN$ are complementary in the sense that $\bbP\bbQ=\bbQ\bbN=\bbP\bbN=\boldsymbol{0}$ and $\bbP+\bbQ+\bbN=\bI$
\end{proposition}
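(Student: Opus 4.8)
The plan is to mirror the proof of \Cref{def: observation projectors}, transporting the argument from the observation-space pencil to the state-space eigenproblem \eqref{eq:GEVstatespace}. The two facts I would lean on are (i) the iteration-independent factorization $\bGamma_i\fisher\bU = \bU\bDelta_{1:h}^{(i)}$ recorded just before the proposition, with $\bDelta_{1:h}^{(i)}=\diag(\delta_{1,i},\ldots,\delta_{h,i})$, the leading $r$ entries positive and the rest zero (from \Cref{prop: deterministic state eigen basis} and the subsequent dimension-count remark), and (ii) the weighted orthonormalization $\bU^\top\fisher\bU=\bI_h$, whose leading $r\times r$ block is $\bI_r$, whose $(r{+}1{:}h,\,r{+}1{:}h)$ block is $\bI_{h-r}$, and whose off-diagonal $(1{:}r,\,r{+}1{:}h)$ block vanishes.

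First I would record how $\bbM_i=(\bI+\bGamma_i\fisher)^{-1}$ acts on the relevant column blocks of $\bU$ from both sides. From (i), $(\bI+\bGamma_i\fisher)\bU_{1:r}=\bU_{1:r}(\bI_r+\bDelta_{1:r}^{(i)})$, and since $\bI_r+\bDelta_{1:r}^{(i)}$ is invertible this gives $\bbM_i\bU_{1:r}=\bU_{1:r}(\bI_r+\bDelta_{1:r}^{(i)})^{-1}$; transposing (i) and using that $\fisher$ and $\bGamma_i$ are symmetric gives $\bU_{1:r}^\top\fisher\bGamma_i=\bDelta_{1:r}^{(i)}\bU_{1:r}^\top$, hence $\bU_{1:r}^\top\fisher(\bI+\bGamma_i\fisher)=(\bI_r+\bDelta_{1:r}^{(i)})\bU_{1:r}^\top\fisher$ and therefore $\bU_{1:r}^\top\fisher\bbM_i=(\bI_r+\bDelta_{1:r}^{(i)})^{-1}\bU_{1:r}^\top\fisher$. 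The same computation with columns $r{+}1,\ldots,h$ (whose eigenvalue block is zero) yields $\bbM_i\bU_{r+1:h}=\bU_{r+1:h}$ and $\bU_{r+1:h}^\top\fisher\bbM_i=\bU_{r+1:h}^\top\fisher$.

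With these relations the verifications are routine. For $\bbP=\bU_{1:r}\bU_{1:r}^\top\fisher$: $\bbM_i\bbP=\bU_{1:r}(\bI_r+\bDelta_{1:r}^{(i)})^{-1}\bU_{1:r}^\top\fisher=\bbP\bbM_i$, and $\bbP^2=\bU_{1:r}(\bU_{1:r}^\top\fisher\bU_{1:r})\bU_{1:r}^\top\fisher=\bU_{1:r}\bI_r\bU_{1:r}^\top\fisher=\bbP$; the identical argument handles $\bbQ$. The products $\bbP\bbQ$ and $\bbQ\bbP$ vanish because each contains the zero block $\bU_{1:r}^\top\fisher\bU_{r+1:h}$ (resp.\ its transpose). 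For $\bbN=\bI-\bbP-\bbQ$: it commutes with $\bbM_i$ since $\bbP$ and $\bbQ$ do; $\bbN^2=\bbN$ follows by expanding $(\bI-\bbP-\bbQ)^2$ and using $\bbP^2=\bbP$, $\bbQ^2=\bbQ$, $\bbP\bbQ=\bbQ\bbP=\boldsymbol{0}$; $\bbP\bbN=\bbP-\bbP^2-\bbP\bbQ=\boldsymbol{0}$, and likewise $\bbN\bbP=\bbQ\bbN=\bbN\bbQ=\boldsymbol{0}$; and $\bbP+\bbQ+\bbN=\bI$ holds by the definition of $\bbN$.

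The one point needing care — the only place this differs structurally from \Cref{def: observation projectors}, where $\bcalM_i$ was $\bSigma$-self-adjoint — is that $\bbM_i$ is built from the non-symmetric matrix $\bGamma_i\fisher$, so I must distinguish its right eigenvectors (the columns of $\bU$) from its left eigenvectors (the rows of $\bU^\top\fisher$); the bridge is precisely the symmetry of $\fisher$ and $\bGamma_i$, which makes $\bGamma_i\fisher$ self-adjoint in the $\fisher$-semi-inner product and turns $\bU^\top\fisher\bU=\bI_h$ into the biorthogonality driving every cancellation above. I would also pause to note, via the dimension count already given after \Cref{prop: deterministic state eigen basis} (the remaining $d-h$ eigenvalues of \eqref{eq:GEVstatespace} are zero), that $\{\bu_1,\ldots,\bu_h\}$ together with a basis of $\Ker(\bH)$ span $\R^d$, so the three projectors genuinely resolve all of state space rather than merely summing to $\bI$ formally.
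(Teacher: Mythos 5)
Your proposal is correct and follows essentially the same route as the paper: both arguments rest on the iteration-independent relation $\bGamma_i\fisher\bU=\bU\bDelta_{1:h}^{(i)}$ and the weighted orthonormalization $\bU^\top\fisher\bU=\bI_h$, and differ only in bookkeeping — the paper first verifies the explicit spectral form $\bbM_i=\bU_{1:r}(\bI+\bDelta_{1:r}^{(i)})^{-1}\bU_{1:r}^\top\fisher+\bbQ+\bbN$ against $\bbM_i^{-1}$ and then commutes the projectors against it, whereas you establish the left and right block actions of $\bbM_i$ on $\bU$ directly. Your closing remark distinguishing right eigenvectors from left eigenvectors (the rows of $\bU^\top\fisher$) is a fair point of care, and your observation that $\Ran(\bbN)=\Ker(\bH)$ matches the paper's own aside.
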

\begin{proof}
    The assertions $\bbP^2=\bbP$ and $\bbQ^2 = \bbQ$ and $\bbP\bbQ=\mathbf{0}$ can be verified from their definitions and the weighted orthonormalization condition $\bU^\top\fisher\bU=\bI_h$, so $\bbP$ and $\bbQ$ are both projectors and $\bbP+\bbQ=\bU\bU^\top\fisher$ is also a projector. Then, $\bbN$ is the complementary projector to $\bbP+\bbQ$ with $\Ran(\bbN)=\Ker(\bbP+\bbQ)=\Ker(\bH)$.
    Then,
    \begin{align*}
        \bbM_i^{-1}&(\bU_{1:r}(\bI+\bDelta_{1:r}^{(i)})^{-1}\bU_{1:r}^\top\fisher+\bbQ+\bbN) \\
        &= (\bI+\bC_i\fisher)(\bU_{1:r}(\bI+\bDelta_{1:r}^{(i)})^{-1}\bU_{1:r}^\top\fisher+\bbQ+\bbN)\\
        &= \bU_{1:r}\bU_{1:r}^\top\fisher+\bbQ+\bbN = \bbP +\bbQ+\bbN=\bI,
    \end{align*}
    so $\bbM_i = \bU_{1:r}(\bI+\bDelta_{1:r}^{(i)})^{-1}\bU_{1:r}^\top\fisher+\bbQ+\bbN$. Using this expression for $\bbM_i$ yields
    \begin{align*}
        \bbP\bbM_i &= \bU_{1:r}\bU_{1:r}^\top\fisher\,(\bU_{1:r}(\bI+\bDelta_{1:r}^{(i)})^{-1}\bU_{1:r}^\top\fisher+\bbQ+\bbN)\\
        &= \bU_{1:r}(\bI+\bDelta_{1:r}^{(i)})^{-1}\bU_{1:r}^\top\fisher \\
        &= (\bU_{1:r}(\bI+\bDelta_{1:r}^{(i)})^{-1}\bU_{1:r}^\top\fisher+\bbQ+\bbN)\,\bU_{1:r}\bU_{1:r}^\top\fisher = \bbM_i\bbP.
    \end{align*}
    Similar calculations show that $\bbM_i$ commutes with $\bbQ$ and $\bbN$. 
\end{proof}
The least squares residual can thus be divided into three components lying in complementary subspaces associated with the oblique projectors defined in \Cref{prop: det state spec proj}: $\bomega_i^{(j)} = \bbP\bomega_i^{(j)}+\bbQ\bomega_i^{(j)}+\bbN\bomega_i^{(j)}$. In the next section, we will analyze the convergence behavior of these three residual components.

\begin{remark}\label{rem: trivial state space}
    Any of these three subspaces of state space may be trivial in special cases: if $\Ker(\bH)$ is trivial then so is $\Ran(\bbN)$. If $\Ran(\bC_i)\supset\Ran(\bH^\top)$ then $\Ran(\bbQ)$ is trivial. And again, if $\Ran(\bC_i)\subset\Ker(\bH)$, then $\Ran(\bbP)$ is trivial.
\end{remark}

\paragraph{Related work}
We highlight the relationship between our spectral decompositions and two previous works. First, the work~\cite{bungert2023complete} previously provided a spectral analysis of the ensemble covariance, $\bC_i$, for the continuous-time limit of linear deterministic EKI. This leads to a system of differential algebraic equations describing the evolution of both the eigenvectors and eigenvalues of $\bC_i$. In contrast, we provide a spectral analysis of $\bbM_i$, the iteration map governing the evolution of the least squares residual, whose eigenvectors remain constant and whose eigenvalues satisfy the simple recurrence shown in \Cref{prop: observation GEV behavior}. This enables us to define fundamental subspaces in state space which are invariant under the residual iteration map.
Second, the work~\cite{snyder2022optimal} proposed linear transformations of the state and observation space in data assimilation problems that diagonalize the forward operator and noise covariance, leading to a diagonal Kalman gain. The motivation is that when the Kalman gain must be estimated from particles, localization strategies that shrink off-diagonal elements can be better motivated with respect to these transformed coordinates than the original ones. These transformations can be obtained from our spectral analysis as follows: if $\bC_i$ is assumed to be full rank and interpreted as a prior covariance in the data assimilation setting, and if $\bH$ is assumed to have full column rank so that $h=d$, then one may verify that the transformations $\by' = \bW^\top \by$ and $\bv' = \bDelta^{\frac12}\bU^\top\bC_i^{-1}\bv$ lead to $\by'=\bH'\bv'$ with $\bH'$ diagonal, and that $\beps'=\bW^\top\beps$ is a standard multivariate normal distribution.

\subsubsection{Deterministic EKI: Convergence analysis in state space $\R^d$}\label{sssec: deterministic state convergence}
We draw on the eigenvalue convergence results developed in~\Cref{prop: deterministic eigenvalue bounds} to prove our main result concerning the evolution of the least squares residual of the EKI particles in the state space $\R^d$.

\begin{theorem}\label{thm: deterministic state space convergence}
    For all particles $j = 1,2,\ldots,J$, the following hold:
    \begin{enumerate}[label=(\alph*)]
        \item as $i\to\infty$, $\|\bbP\bomega_i^{(j)}\|=\mathcal{O}(i^{-\frac12})$,
        \item for all $i\geq0$, $\bbQ\bomega_i^{(j)}=\bbQ\bomega_0^{(j)}$, and 
        \item for all $i\geq0$, $\bbN\bomega_i^{(j)}=\bbN\bomega_0^{(j)}$.
    \end{enumerate}
\end{theorem}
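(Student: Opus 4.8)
The plan is to replay, in the state space, the observation-space argument behind \Cref{thm: deterministic observation particles}, now driven by the residual iteration map $\bbM_i=(\bI+\bGamma_i\fisher)^{-1}$ from \Cref{prop_statespace_convergence} together with its spectral decomposition from \Cref{prop: det state spec proj}. First I would introduce the composed map $\bbM_{i0}=\bbM_i\bbM_{i-1}\cdots\bbM_1\bbM_0$, so that $\bomega_i^{(j)}=\bbM_{i0}\bomega_0^{(j)}$ for every particle $j$. Because \Cref{prop: deterministic state eigen basis} establishes that the eigenvectors $\bU=[\bu_1,\dots,\bu_h]$ are \emph{independent of} $i$ and carry the same eigenvalues $\delta_{\ell,i}$ as the observation-space pencil, \Cref{prop: det state spec proj} gives $\bbM_k=\bU_{1:r}(\bI_r+\bDelta_{1:r}^{(k)})^{-1}\bU_{1:r}^\top(\fisher)+\bbQ+\bbN$ for every $k$. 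Using the weighted orthonormality $\bU^\top(\fisher)\bU=\bI_h$ — which yields $\bbQ\bU_{1:r}=\bbN\bU_{1:r}=\boldsymbol{0}$, $\bbQ^2=\bbQ$, $\bbN^2=\bbN$, $\bbQ\bbN=\bbN\bbQ=\boldsymbol{0}$ — the product telescopes block-diagonally to
\begin{align*}
\bbM_{i0}=\bU_{1:r}\bD_{i0}\bU_{1:r}^\top(\fisher)+\bbQ+\bbN,\qquad \bD_{i0}=\prod_{k=0}^{i}(\bI_r+\bDelta_{1:r}^{(k)})^{-1},
\end{align*}
which is exactly the \emph{same} diagonal factor $\bD_{i0}$ that arises in the proof of \Cref{thm: deterministic observation particles}.

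Parts (b) and (c) then follow at once from this block form: since $\bbQ$ and $\bbN$ annihilate the leading $\bbP$-block of each $\bbM_k$ and act as identities on their own ranges, $\bbQ\bbM_{i0}=\bbQ$ and $\bbN\bbM_{i0}=\bbN$ for all $i$, hence $\bbQ\bomega_i^{(j)}=\bbQ\bomega_0^{(j)}$ and $\bbN\bomega_i^{(j)}=\bbN\bomega_0^{(j)}$.

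For part (a), I would commute the spectral projector $\bbP$ past $\bbM_{i0}$ and use $\bbQ\bbP=\bbN\bbP=\boldsymbol{0}$ and $\bU_{1:r}^\top(\fisher)\bbP=\bU_{1:r}^\top(\fisher)$ to obtain $\bbP\bomega_i^{(j)}=\bbM_{i0}\bbP\bomega_0^{(j)}=\bU_{1:r}\bD_{i0}\bU_{1:r}^\top(\fisher)\bbP\bomega_0^{(j)}$, so that $\|\bbP\bomega_i^{(j)}\|\le\|\bU_{1:r}\|\,\|\bD_{i0}\|\,\|\bU_{1:r}^\top(\fisher)\|\,\|\bbP\bomega_0^{(j)}\|$, a constant independent of $i$ times $\|\bD_{i0}\|$. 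Because the $\delta_{\ell,k}$ are ordered non-increasingly in $\ell$, the largest diagonal entry of $\bD_{i0}$ is $d_{r,i}=\prod_{k=0}^{i}(1+\delta_{r,k})^{-1}$, so $\|\bD_{i0}\|=d_{r,i}$; and this is precisely the quantity already bounded in the proof of \Cref{thm: deterministic observation particles} — via \Cref{prop: deterministic eigenvalue bounds}, \Cref{fact: reciprocal sum bounds}, and $\log(1+x)>x-\tfrac12 x^2$ on $(0,1)$ — as $d_{r,i}<\dfrac{\exp(c_\infty)}{1+\delta_{r,0}}\,\dfrac{1}{\sqrt i}$. Hence $\|\bbP\bomega_i^{(j)}\|=\mathcal{O}(i^{-\frac12})$, so I can invoke that bound directly rather than repeating the estimate.

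The one step requiring genuine care — and the main obstacle — is verifying that the product $\bbM_{i0}$ actually collapses to the displayed block form. This rests entirely on all the $\bbM_k$ being simultaneously diagonalized by the single, iteration-independent basis $\bU$ supplied by \Cref{prop: deterministic state eigen basis}, and on handling correctly the fact that $\bbP,\bbQ,\bbN$ are \emph{oblique}, $(\fisher)$-weighted projectors rather than orthogonal ones, so that left- and right-multiplication must be tracked separately. Once that identity is established, everything else is a transcription of the observation-space argument under the substitutions $\bcalM_i\mapsto\bbM_i$, $(\bcP,\bcQ,\bcN)\mapsto(\bbP,\bbQ,\bbN)$, $\bW\mapsto\bU$, with $\bSigma$-orthogonality replaced by $(\fisher)$-orthogonality.
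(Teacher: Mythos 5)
Your proposal is correct and follows essentially the same route as the paper's own proof: both express $\bomega_i^{(j)}=\bbM_{i0}\bomega_0^{(j)}$, collapse the product to $\bU_{1:r}\bD_{i0}\bU_{1:r}^\top(\fisher)+\bbQ+\bbN$ using the iteration-independent eigenbasis of \Cref{prop: deterministic state eigen basis} and the spectral projectors of \Cref{prop: det state spec proj}, and then reuse the bound $d_{r,i}=\mathcal{O}(i^{-1/2})$ already established in the proof of \Cref{thm: deterministic observation particles}. Nothing essential is missing.
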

\begin{proof}
    The proof is analogous to that of \Cref{thm: deterministic observation particles}.
    Let $\bomega_i^{(j)} = \bbM_{i0}\bomega_0^{(j)}$, where $\bbM_{i0} = \bbM_i\bbM_{i-1}\cdots\bbM_0$ with 
    $\bbM_i = \bU_{1:r}(\bI + \bDelta_{1:r}^{(i)})^{-1}\bU_{1:r}^\top(\fisher) + \bbQ+\bbN$ and  $\bDelta_{1:r}^{(i)}$ denotes the diagonal matrix of the first $r$ nonzero eigenvalues. Thus, with the earlier definition of $\bD_{i0} = \prod_{k=0}^i(\bI+\bDelta_{1:r}^{(k)})^{-1}$, we have
    \begin{align*}
        \bbM_{i0} = \bU_r\bD_{i0}\bU_r^\top(\fisher) + \bbQ + \bbN
    \end{align*}
    Thus, for all $i$, $\bbQ\bomega_i^{(j)} = \bbQ\bbM_{i0}\bomega_0^{(j)} = \bbQ\bomega_0^{(j)}$,  $\bbN\bomega_i^{(j)} = \bbN\bbM_{i0}\bomega_0^{(j)} = \bbN\bomega_0^{(j)}$, and
    \begin{align*}
        \bbP\bomega_i^{(j)} = \bbP\bbM_{i0}\bomega_0^{(j)} = \bU_{1:r}\bD_{i0}\bU_{1:r}^\top(\fisher)\bbP\bomega_0^{(j)}.
    \end{align*}
    Recall $\|\bD_{i0}\|=d_{r,i}$ where $d_{r,i}$ is the largest diagonal element of $\bD_{i0}$ as before. Then, 
    \begin{align*}
        \|\bbP\bomega_i^{(j)}\|\leq d_{r,i}\|\bU_{1:r}\| \, \|\fisher\bU_{1:r}\|\, \|\bbP\bomega_0^{(j)}\|.
    \end{align*}
    Note that for each $j$, $\|\bU_{1:r}\| \, \|\fisher\bU_{1:r}\|\, \|\bbP\bomega_0^{(j)}\|$ is a constant independent of $i$. Since we have shown in the proof of \Cref{thm: deterministic observation particles} that $d_{r,i}=\mathcal{O}(i^{-\frac12})$, we are done.
\end{proof}

\begin{corollary}\label{cor: state particle convergence deterministic} 
    $\displaystyle    \lim_{i\to\infty}\bv_i^{(j)} = \bbP\bH^+\by + \bbQ\bv_0^{(j)} + \bbN\bv_0^{(j)},
    $ for each $j = 1,2,\ldots,J$.
\end{corollary}
Thus, each particle converges to the minimum-norm least squares solution $\bH^+\by$ only in its $\bbP$-component, while its $\bbQ$- and $\bbN$-components remain at their initial values. This is a direct conclusion from \Cref{thm: deterministic state space convergence}.

\paragraph{Related work}
The earlier works~\cite{schillings2017analysis,schillings2018convergence} decompose the state space behavior into two components corresponding to our projection $\bbP$ and a complementary projector under the assumption that $\bH$ is one-to-one, and focus on recovery of the pre-image under $\bH$ of $\by$ rather than the least squares solution~\eqref{eq: ls solution}. The work~\cite{bungert2023complete} assumes $\bC_i$ is full rank and shows that the $j$th particle $\bv_i^{(j)}$ converges to the minimizer of the least-squares objective~\eqref{eq: least squares problem} that is closest in the $\bC_0^{-1}$-norm to its initialization, $\bv_0^{(j)}$.
In contrast, \Cref{thm: deterministic state space convergence} provides the \textit{first results describing convergence of EKI particles to the standard minimum-norm least-squares minimizer}~\eqref{eq: ls solution}. We allow both $\bH$ and $\bC_i$ to be rank-deficient, leading to the definition of three fundamental invariant subspaces of EKI in state space analogous to those previously defined in observation space.

\section{Analysis of stochastic EKI}\label{sec: stochastic analysis}
We now provide an analysis of basic linear \textit{stochastic} EKI (\Cref{alg:EKI} with $\bSigma=\bGamma$). Paralleling our analysis of linear deterministic EKI from \Cref{sec: deterministic analysis}, we begin with stochastic EKI results in observation space (\Cref{ssec: stoch observation space}) before developing related results in state space (\Cref{ssec: stoch state space}).

\subsection{Stochastic EKI: Analysis in observation space $\R^n$}\label{ssec: stoch observation space}
We begin by deriving an idealized data misfit iteration that reflects an idealized covariance update (\Cref{ssec: stoch misfit iteration}). \Cref{sssec: stochastic observation spectral} then provides a spectral analysis of this idealized iteration which distinguishes three fundamental subspaces of stochastic EKI. Convergence behaviors within these subspaces are analyzed in \Cref{sssec: stochastic observation convergence}.

\subsubsection{Stochastic EKI: An idealized data misfit iteration}\label{ssec: stoch misfit iteration} Recall our definition of the data misfit: $\btheta_i^{(j)}=\bH\bv_i^{(j)}-\by$ for $j\leq J$ and $i\geq0$. In stochastic EKI, $\by_i^{(j)}=\by+\beps_i^{(j)}$ in the particle update~\eqref{eq: EKI update}. This yields a misfit iteration similar to \eqref{eq: deterministic misfit iteration} but with a forcing term arising from the stochastic perturbation $\beps_i^{(j)}$:
\begin{align}\label{eq: stochastic misfit iteration}
    \btheta_{i+1}^{(j)} = \bcalM_i\btheta_i^{(j)} + (\bI-\bcalM_i)\beps_{i}^{(j)}
\end{align}
where $\bcalM_i= \bGamma(\bH\bC_i\bH^\top + \bGamma)^{-1}$, defined as before. 
In our analysis of deterministic EKI, we showed that generalized eigenvectors of the pencil $(\bH\bC_i\bH^\top,\bGamma)$ remain constant under the deterministic EKI iteration, enabling us to define fundamental subspaces that are invariant under $\bcalM_i$ across all iterations. This invariance no longer holds for stochastic EKI: to see this, note that under the stochastic misfit iteration~\eqref{eq: stochastic misfit iteration}, the observation space covariance $\bH\bC_i\bH^\top = \sfCov[\btheta_i^{(1:J)}]$ satisfies
\begin{equation}\label{eq: full stoch obs covariance iteration}
\small
    \begin{aligned}
    \bH\bC_{i+1}\bH^\top &= \bcalM_i\bH\bC_i\bH^\top\bcalM_i^\top + \bcalM_i\sfCov[\bH\bv_i^{(1:J)},\beps_i^{(1:J)}](\bI-\bcalM_i)^\top \cdots \\&+(\bI-\bcalM_i)\sfCov[\beps_i^{(1:J)},\bH\bv_i^{(1:J)}]\bcalM_i^\top + (\bI-\bcalM_i)\sfCov[\beps_i^{(1:J)}](\bI-\bcalM_i)^\top.
\end{aligned}
\end{equation}
Relative to its deterministic EKI analogue (cf.\ the proof of \Cref{prop: observation GEV behavior}), eq.~\eqref{eq: full stoch obs covariance iteration} has several additional terms dependent on the realizations of the stochastic perturbations $\beps_i^{(j)}$ at the current iteration that will lead to eigenvectors of $(\bH\bC_i\bH^\top,\bGamma)$ changing from one iteration to the next.
Instead of carefully accounting for these changes, we will base our analytical treatment of stochastic EKI on an idealized covariance iteration which we now motivate and define.

Let $\beps_{0:k}^{(1:J)}=\{\beps_i^{(j)}\}_{i=0,j=1}^{k,J}$ denote the set of all stochastic perturbations through iteration $k$ for all particles $j=1,\ldots,J$. Note that in~\eqref{eq: full stoch obs covariance iteration}, the quantities $\bC_i$, $\bcalM_i$, and $\bv_i^{(j)}$ are all random variables that depend on previous noise realizations $\beps_{0:i-1}^{(1:J)}$. Conditioning on the previous noise and taking the expectation of~\eqref{eq: full stoch obs covariance iteration} with respect to the current ($i$th) noise perturbations yields
\begin{align*}
\E[\bH\bC_{i+1}\bH^\top|\beps_{0:i-1}^{(1:J)}] &= \bcalM_i\bH\bC_i\bH^\top \bcalM_i^\top + (\bI-\bcalM_i)\bGamma(\bI-\bcalM_i)^\top,\nonumber
\end{align*}
which, after substitution and some  rearrangement becomes
\begin{align}
    \E[\bH\bC_{i+1}\bH^\top|\beps_{0:i-1}^{(1:J)}] &= \bGamma - \bGamma(\bH\bC_i\bH^\top + \bGamma)^{-1}\bGamma.\label{eq: stoch covariance conditional iteration}
\end{align}
This motivates the definition of $\tbB_i\in\R^{n\times n}$ for $i\geq0$ as follows:
\begin{align}\label{eq: stoch C definition}
    \tbB_0 = \bH\bC_0\bH^\top \quad \mbox{and}\quad \tbB_{i+1} = \bGamma - \bGamma(\tbB_i + \bGamma)^{-1}\bGamma,
    \quad \mbox{for}\quad i\geq0.
\end{align} 
The matrices $\tbB_i$ satisfy an \textit{idealized covariance iteration}~\eqref{eq: stoch C definition} reflecting the form of the conditional expectation~\eqref{eq: stoch covariance conditional iteration}, and so $\tbB_i$ can be viewed as idealized analogues of $\bH\bC_i\bH^\top$. We then define the idealized misfit iteration map $\widetilde\bcalM_i = \bGamma(\tbB_i+\bGamma)^{-1}$, analogous to $\bcalM_i$, leading to the following \textit{idealized misfit iteration} (analogous to~\eqref{eq: stochastic misfit iteration}):
\begin{align}\label{eq: simplified stochastic misfit iteration}
    \tilde\btheta_{0}^{(j)} = \btheta_0^{(j)}, \qquad \tilde\btheta_{i+1}^{(j)} = \widetilde\bcalM_i\tilde\btheta_i^{(j)} + (\bI-\widetilde\bcalM_i)\beps_{i}^{(j)}, \quad\text{for }i=0,1,\ldots.
\end{align}
In what follows, our analysis of stochastic EKI will treat this idealized misfit iteration and its state-space counterpart, which we will show share favorable properties with their deterministic EKI analogues.

\paragraph{Related work}
We relate the analysis of our idealized iteration~\eqref{eq: simplified stochastic misfit iteration} to earlier works providing analyses of stochastic EKI. 
The works~\cite{blomker2019well,blomker2022continuous} analyze the continuous-time limit of the stochastic iteration, yielding a system of coupled stochastic differential equations governing  individual particle trajectories. 
In this analysis approach, the authors introduce additive covariance inflation in order to prevent the ensemble from collapsing prematurely before converging to a solution~\cite{blomker2019well}.
Along similar lines, we show within our framework in \Cref{prop: 9c upper bound} that $\tbB_i\geq \E[\bH\bC_i\bH^\top]$, so that the idealized iteration~\eqref{eq: simplified stochastic misfit iteration} can be interpreted as reflecting an implicit inflation of covariance. We show that under~\eqref{eq: stoch C definition}, the idealized covariance $\tbB_i$ collapses only in the infinite iteration limit.
Note that by defining $\tbB_i$ so as to reflect the conditional expectation iteration~\eqref{eq: stoch covariance conditional iteration} (as opposed to  explicitly adding a positive covariance inflation term), our approach enables the true stochastic EKI iteration~\eqref{eq: stochastic misfit iteration} to be interpreted as a particle approximation of the idealized iteration~\eqref{eq: simplified stochastic misfit iteration}. 
The results we prove concerning the idealized iteration would therefore exactly describe the behavior of the true stochastic EKI iteration if the empirical covariances of the observation perturbations at each iteration matched their expected values. While this is vanishingly unlikely for any finite ensemble size, large ensembles will more closely approximate our idealized iteration, shedding light into the failure of stochastic EKI to converge when the ensemble size is small (see numerical results in \Cref{sec: numerics}).
Our analysis approach therefore shares some commonalities with mean-field limit analyses of stochastic EKI~\cite{bungert2023complete,ding2021ensemble}, which analyze the algorithm in the infinite ensemble limit, leading to deterministic expressions governing the ensemble statistics (mean and covariance). In contrast, while our idealized covariance iteration expression~\eqref{eq: stoch C definition} is deterministic, we provide new expressions for the idealized misfits of \textit{individual particles}~\eqref{eq: simplified stochastic misfit iteration} retaining the stochastic dependence on  perturbations, and revealing convergence properties for individual particle paths supported by numerical experiments.
Finally, the work~\cite{ghattas2024NonAsymptoticAnalysisEnsemble} provides non-asymptotic analysis of EKI proving that EKI can perform well with small ensembles provided the effective dimension of the inverse problem is small; under these circumstances the true statistics can be well-approximated by a smaller ensemble, and in our setting would lead to the true stochastic EKI closely approximating our idealized iteration with a smaller ensemble.

\subsubsection{Stochastic EKI: Decomposition of observation space $\R^n$}\label{sssec: stochastic observation spectral}
We now provide a spectral analysis of $\widetilde\bcalM_i$ that distinguishes three fundamental subspaces that are invariant under~\eqref{eq: simplified stochastic misfit iteration}. Consider the generalized eigenvalue problem
\begin{align}\label{eq: GEV upper bound}
     \tbB_i \widetilde\bw_{\ell,i} = \tilde\delta_{\ell,i}\bGamma\widetilde\bw_{\ell,i}.
\end{align}
We now show an analogue of~\Cref{prop: observation GEV behavior}:
\begin{proposition}\label{prop: 9c eigenvalue recurrence}
    Let $(\tilde\delta_i,\widetilde\bw)$ be an eigenpair for the pencil $(\tbB_i,\bGamma)$, i.e., satisfying~\eqref{eq: GEV upper bound}. Then, $\widetilde\bw$ is also an eigenvector of the pencil $(\tbB_{i+1},\bGamma)$ with eigenvalue $\tilde\delta_{i+1} = \frac{\tilde\delta_{i}}{1+\tilde\delta_{i}}$.
\end{proposition}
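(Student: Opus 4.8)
The plan is to mimic the proof of \Cref{prop: observation GEV behavior}, but with the crucial simplification that the idealized covariance iteration~\eqref{eq: stoch C definition} is \emph{deterministic}, so no conditioning or expectations are needed. I would start from an eigenpair $(\tilde\delta_i,\widetilde\bw)$ of the pencil $(\bC_i,\bSigma)$, i.e.\ $\bC_i\widetilde\bw = \tilde\delta_i\bSigma\widetilde\bw$, add $\bSigma\widetilde\bw$ to both sides to get $(\bC_i+\bSigma)\widetilde\bw = (1+\tilde\delta_i)\bSigma\widetilde\bw$, and invert to obtain the key identity $(\bC_i+\bSigma)^{-1}\bSigma\widetilde\bw = \frac{1}{1+\tilde\delta_i}\widetilde\bw$ (this is the analogue of the starred equation $(*)$ in the earlier proof).

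Next I would apply the idealized covariance recurrence $\bC_{i+1} = \bSigma - \bSigma(\bC_i+\bSigma)^{-1}\bSigma$ directly to $\widetilde\bw$: using the key identity, $\bSigma(\bC_i+\bSigma)^{-1}\bSigma\widetilde\bw = \frac{1}{1+\tilde\delta_i}\bSigma\widetilde\bw$, so
\begin{align*}
\bC_{i+1}\widetilde\bw = \bSigma\widetilde\bw - \frac{1}{1+\tilde\delta_i}\bSigma\widetilde\bw = \left(1 - \frac{1}{1+\tilde\delta_i}\right)\bSigma\widetilde\bw = \frac{\tilde\delta_i}{1+\tilde\delta_i}\,\bSigma\widetilde\bw,
\end{align*}
which shows $\widetilde\bw$ is an eigenvector of $(\bC_{i+1},\bSigma)$ with eigenvalue $\tilde\delta_{i+1} = \tilde\delta_i/(1+\tilde\delta_i)$, as claimed. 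One should also note at the outset that $\bC_0 = \bH\bGamma_0\bH^\top$ is symmetric positive semidefinite and $\bSigma$ is symmetric positive definite, so all eigenvalues $\tilde\delta_{\ell,i}$ are real and nonnegative, and the recurrence $\tilde\delta\mapsto\tilde\delta/(1+\tilde\delta)$ keeps them nonnegative (and maps $[0,\infty)$ into $[0,1)$); a brief induction confirms $\bC_i$ stays symmetric positive semidefinite so the pencil is well-defined at every iteration.

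Unlike the deterministic case, there is essentially no obstacle here: the $\bC_i$ iteration was \emph{constructed} to be a clean scalar-like map on the generalized eigenspaces, so the proof is a direct substitution with no bookkeeping of evolving eigenvectors, no covariance of noise terms, and no need for the auxiliary identity $\bH\bGamma_{i+1}\bH^\top = \bcalM_i\bH\bGamma_i\bH^\top\bcalM_i^\top$ that complicated the deterministic argument. The only minor point to be careful about is invertibility of $\bC_i+\bSigma$, which follows immediately since $\bSigma\succ 0$ and $\bC_i\succeq 0$. The whole argument is two or three lines once the key identity is written down.
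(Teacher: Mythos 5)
Your proposal is correct and follows essentially the same route as the paper: both rest on the identity $(\bC_i+\bSigma)\widetilde\bw = (1+\tilde\delta_i)\bSigma\widetilde\bw$, inverted and substituted into the recurrence $\bC_{i+1} = \bSigma - \bSigma(\bC_i+\bSigma)^{-1}\bSigma$ to read off the eigenvalue $\tilde\delta_i/(1+\tilde\delta_i)$. Your added remarks on positive semidefiniteness of $\bC_i$ and invertibility of $\bC_i+\bSigma$ are correct housekeeping that the paper leaves implicit.
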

\begin{proof}
From our definition~\eqref{eq: stoch C definition} of the iteration determining $\tbB_i$, we have
\begin{align*}
    \tbB_{i+1} \widetilde \bw &= (\bI-\bGamma(\tbB_i+\bGamma)^{-1})\bGamma\widetilde\bw = \tbB_i(\tbB_i+\bGamma)^{-1}\bGamma\widetilde\bw=\frac{\tilde\delta_{i}}{1+\tilde\delta_{i}}\bGamma\widetilde\bw.
\end{align*}
\end{proof}

As in the deterministic case we now write $\widetilde\bw_\ell=\widetilde\bw_{\ell,i}$ for all $i$.
An analogue of~\Cref{prop: eigenvector basis deterministic observation} concerning the construction of an eigenvector basis for $\R^n$ holds:
\begin{proposition}\label{prop: stochastic observation basis}
    $\tilde\delta_{\ell,0}=0$ implies $\tilde\delta_{\ell,i}=0$ for all $i\geq 1$; and $\tilde\delta_{\ell,0}>0$ implies $\tilde\delta_{\ell,i}>0$ for all $i\geq1$. Let $r$ denote the number of positive eigenvalues of~\eqref{eq: GEV upper bound}. There is a $\bGamma$-orthogonal basis for $\R^n$ comprised of eigenvectors of~\eqref{eq: GEV upper bound}, $\{\widetilde\bw_1,\ldots,\widetilde\bw_n\}$, satisfying
    \begin{enumerate}
        \item $\{\widetilde\bw_1,\ldots,\widetilde\bw_r\}\subset\Ran(\bGamma^{-1}\bH)$ are eigenvectors of~\eqref{eq: GEV upper bound} associated with positive eigenvalues $\tilde\delta_{1,i},\ldots,\tilde\delta_{r,i}$, labeled in non-increasing order at $i=1$, that is, $\tilde\delta_{1,1}\geq \tilde\delta_{2,1}\geq \cdots\geq\tilde\delta_{r,1}>0$. This ordering is preserved for subsequent $i\geq 1$.
        \item if $r<h$, $\{\widetilde\bw_{r+1},\ldots,\widetilde\bw_h\}\subset\Ran(\bGamma^{-1}\bH)$ are eigenvectors of~\eqref{eq: GEV upper bound} associated with zero eigenvalues, and
        \item if $h<n$, $\{\widetilde\bw_{h+1},\ldots,\widetilde\bw_n\}\subset\Ker(\bH^\top)$ are eigenvectors of~\eqref{eq: GEV upper bound} also associated with zero eigenvalues.
    \end{enumerate}
\end{proposition}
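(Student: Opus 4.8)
The plan is to mimic the construction in the proof of \Cref{prop: eigenvector basis deterministic observation}, since the idealized iteration \eqref{eq: stoch C definition} is structurally parallel to the deterministic covariance iteration. First I would read off from the eigenvalue recurrence $\tilde\delta_{i+1} = \tilde\delta_i/(1+\tilde\delta_i)$ in \Cref{prop: 9c eigenvalue recurrence} that the map $x\mapsto x/(1+x)$ fixes $0$ and is monotone increasing on $[0,\infty)$, carrying $(0,\infty)$ into itself; this immediately gives preservation of (non-)zeroness of eigenvalues and preservation of ordering. Next I would establish the key structural fact that $\Ran(\bC_i)\subset \Ran(\bH)$ and more precisely that $\bSigma^{-1}\Ran(\bC_i)\subset\Ran(\bSigma^{-1}\bH)$ for all $i$: this holds at $i=0$ because $\bC_0=\bH\bGamma_0\bH^\top$, and is propagated by \eqref{eq: stoch C definition} since $\bC_{i+1}\widetilde\bw=\bC_i(\bC_i+\bSigma)^{-1}\bSigma\widetilde\bw$ (from the proof of \Cref{prop: 9c eigenvalue recurrence}), so $\Ran(\bC_{i+1})\subset\Ran(\bC_i)\subset\Ran(\bC_0)\subset\Ran(\bH)$. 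This is what pins the nonzero-eigenvalue eigenvectors and the ``extra'' zero-eigenvalue eigenvectors into $\Ran(\bSigma^{-1}\bH)$.

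With these facts in hand I would assemble the basis exactly as in \Cref{prop: eigenvector basis deterministic observation}: take $\widetilde\bw_1,\ldots,\widetilde\bw_r$ to be $\bSigma$-orthogonal eigenvectors of \eqref{eq: GEV upper bound} for the $r$ positive eigenvalues (ordered decreasingly at $i=1$), noting $\mathsf{span}(\widetilde\bw_1,\ldots,\widetilde\bw_r)=\Ran(\bSigma^{-1}\bC_i)\subset\Ran(\bSigma^{-1}\bH)$; if $r<h$, complete to a $\bSigma$-orthogonal basis $\{\widetilde\bw_1,\ldots,\widetilde\bw_h\}$ of $\Ran(\bSigma^{-1}\bH)$ — any new vector is $\bSigma$-orthogonal to $\Ran(\bSigma^{-1}\bC_i)$, hence annihilated by $\bC_i$, hence a zero-eigenvalue eigenvector; and if $h<n$, take $\{\widetilde\bw_{h+1},\ldots,\widetilde\bw_n\}$ a $\bSigma$-orthogonal basis of $\Ker(\bH^\top)$, which lies in $\Ker(\bC_i)$ since $\bC_i=\bC_i^\top$ has range in $\Ran(\bH)$ so $\Ker(\bH^\top)=\Ran(\bH)^{\perp_\bSigma}\cdot\bSigma\subset\Ker(\bC_i)$ — more directly, $\bw\in\Ker(\bH^\top)\Rightarrow \bC_0\bw=\bH\bGamma_0\bH^\top\bw=0$ and by the recurrence $\bC_{i+1}\bw=\bC_i(\bC_i+\bSigma)^{-1}\bSigma\bw=0$ inductively. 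The union is then a $\bSigma$-orthogonal eigenbasis of $\R^n$, and I would renormalize so $\widetilde\bw_\ell^\top\bSigma\widetilde\bw_\ell=1$.

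The one genuinely new wrinkle — and the step I expect to take the most care — is verifying that $r$, the number of positive eigenvalues of \eqref{eq: GEV upper bound}, equals $\rank(\bC_0)=\rank(\bH\bGamma_0\bH^\top)$ and that this matches the count used elsewhere, i.e., that the ``$r$'' here is consistent with the ``$r$'' of the deterministic analysis when the initial ensemble is shared. This follows because $\rank(\bC_i)$ is constant in $i$: the recurrence $\bC_{i+1}=\bC_i(\bC_i+\bSigma)^{-1}\bSigma$ is a product of $\bC_i$ with invertible factors, so $\Ran(\bC_{i+1})=\Ran(\bC_i)$ (not merely $\subset$), and hence the positive eigenvalue count is preserved; I would also note $r\le h$ since $\Ran(\bC_i)\subset\Ran(\bH)$. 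Everything else is a transcription of the deterministic argument with $\bH\bGamma_i\bH^\top$ replaced by $\bC_i$ and the recurrence $x/(1+x)^2$ replaced by $x/(1+x)$, and the $\bSigma$-orthogonality bookkeeping is identical. I would close the proof after the basis assembly, since the three labeled subspace inclusions are then immediate from the construction.
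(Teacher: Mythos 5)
Your proposal is correct and follows exactly the route the paper intends: the paper's own ``proof'' is the single sentence that the argument is analogous to that of \Cref{prop: eigenvector basis deterministic observation}, and your write-up is a faithful and careful transcription of that analogy, with the recurrence $x/(1+x)^2$ replaced by $x/(1+x)$ and $\bH\bGamma_i\bH^\top$ replaced by $\bC_i$. The additional observations you flag --- that $\bC_{i+1}=\bC_i(\bC_i+\bSigma)^{-1}\bSigma$ gives $\Ran(\bC_{i+1})=\Ran(\bC_i)\subset\Ran(\bH)$ and hence a constant positive-eigenvalue count $r\le h$ --- are exactly the points one must check to make the analogy rigorous, and they are all correct.
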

The proof is analogous to that of \Cref{prop: eigenvector basis deterministic observation}.

Let $\widetilde\bW = [\widetilde\bw_1,\ldots,\widetilde\bw_n]$, with the normalization $\widetilde\bW^\top\bGamma\widetilde\bW=\bI$. We define spectral projectors of $\widetilde\bcalM_i$ that divide $\R^n$ into three fundamental subspaces of stochastic EKI:
\begin{proposition}\label{prop: stoch meas spec proj}
Let $\widetilde\bW_{k:\ell}\in\R^{n\times(\ell-k+1)}$ denote the $k$-through-$\ell$-th columns of $\widetilde\bW$. Define $\widetilde\bcP = \bGamma\widetilde\bW_{1:r}\widetilde\bW_{1:r}^\top$, $\widetilde\bcQ = \bGamma\widetilde\bW_{r+1:h}\widetilde\bW_{r+1:h}^\top$, and $\widetildeto{\bcQ}{\bcN} = \bGamma\widetilde\bW_{h+1:n}\widetilde\bW_{h+1:n}^\top$.
Then, $\widetilde\bcP$, $\widetilde\bcQ$, and $\widetildeto{\bcQ}{\bcN}$ are complementary spectral projectors associated with the idealized misfit iteration map $\widetilde\bcalM_i$.    
\end{proposition}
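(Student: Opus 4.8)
The plan is to follow the proof of Proposition~\ref{def: observation projectors} essentially verbatim, the only genuinely new step being a spectral factorization of the idealized misfit iteration map $\widetilde\bcalM_i = \bSigma(\bC_i+\bSigma)^{-1}$ in the $\bSigma$-orthonormal eigenbasis supplied by Proposition~\ref{prop: stochastic observation basis}. Writing $\widetilde\bW = [\widetilde\bw_1,\ldots,\widetilde\bw_n]$ and $\widetilde\bDelta_i = \diag(\tilde\delta_{1,i},\ldots,\tilde\delta_{n,i})$, I would first collect the eigenpairs of~\eqref{eq: GEV upper bound} as $\bC_i\widetilde\bW = \bSigma\widetilde\bW\,\widetilde\bDelta_i$; adding $\bSigma\widetilde\bW$ to both sides and inverting gives $(\bC_i+\bSigma)^{-1}\bSigma\widetilde\bW = \widetilde\bW(\bI_n+\widetilde\bDelta_i)^{-1}$. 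Using the normalization $\widetilde\bW^\top\bSigma\widetilde\bW = \bI_n$ (so $\widetilde\bW^{-1} = \widetilde\bW^\top\bSigma$) and transposing yields
\[
\widetilde\bcalM_i = \bSigma(\bC_i+\bSigma)^{-1} = \bSigma\widetilde\bW(\bI_n+\widetilde\bDelta_i)^{-1}\widetilde\bW^\top.
\]

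Given this factorization, the remaining verifications proceed exactly as in Proposition~\ref{def: observation projectors}. Idempotency of $\widetilde\bcP$, $\widetilde\bcQ$, $\widetildeto{\bcQ}{\bcN}$ follows from the block normalization identities $\widetilde\bW_{k:\ell}^\top\bSigma\widetilde\bW_{k:\ell} = \bI_{\ell-k+1}$; for instance $\widetilde\bcP^2 = \bSigma\widetilde\bW_{1:r}(\widetilde\bW_{1:r}^\top\bSigma\widetilde\bW_{1:r})\widetilde\bW_{1:r}^\top = \widetilde\bcP$. Commutation with $\widetilde\bcalM_i$ follows by substituting the factorization and using $\widetilde\bW^\top\bSigma\widetilde\bW_{1:r}$ to pick out the leading $r$ columns: $\widetilde\bcalM_i\widetilde\bcP = \bSigma\widetilde\bW_{1:r}(\bI_r + \widetilde\bDelta_{1:r}^{(i)})^{-1}\widetilde\bW_{1:r}^\top = \widetilde\bcP\widetilde\bcalM_i$, with $\widetilde\bDelta_{1:r}^{(i)} = \diag(\tilde\delta_{1,i},\ldots,\tilde\delta_{r,i})$; the same computation works for $\widetilde\bcQ$ (whose associated eigenvalue block vanishes) and $\widetildeto{\bcQ}{\bcN}$. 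Finally, $\bSigma$-orthogonality of the eigenbasis makes all cross-products of $\widetilde\bcP$, $\widetilde\bcQ$, $\widetildeto{\bcQ}{\bcN}$ vanish, while $\widetilde\bW^\top\bSigma\widetilde\bW = \bI_n$ gives $\bSigma^{-1} = \widetilde\bW\widetilde\bW^\top = \widetilde\bW_{1:r}\widetilde\bW_{1:r}^\top + \widetilde\bW_{r+1:h}\widetilde\bW_{r+1:h}^\top + \widetilde\bW_{h+1:n}\widetilde\bW_{h+1:n}^\top$, so left-multiplication by $\bSigma$ yields $\widetilde\bcP + \widetilde\bcQ + \widetildeto{\bcQ}{\bcN} = \bI_n$.

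There is essentially no obstacle here: the argument is mechanical once the factorization of $\widetilde\bcalM_i$ is in place, and that in turn needs nothing beyond Proposition~\ref{prop: 9c eigenvalue recurrence} (constancy of the eigenvectors across iterations), Proposition~\ref{prop: stochastic observation basis} (the $\bSigma$-orthogonal eigenbasis and its $\Ran(\bSigma^{-1}\bH)$/$\Ker(\bH^\top)$ splitting), and the normalization $\widetilde\bW^\top\bSigma\widetilde\bW = \bI_n$ recorded just before the proposition. The one point deserving a line of care is that Proposition~\ref{prop: stochastic observation basis} only guarantees $\bSigma$-orthogonality of the $\widetilde\bw_\ell$, so rescaling each eigenvector to unit $\bSigma$-norm — harmless, since eigenvectors are defined up to scaling — is what turns the block identities above into the stated coordinate projections. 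Accordingly, one could equivalently state the proof as: it is that of Proposition~\ref{def: observation projectors} with $(\bW,\bcalM_i,\bDelta_i)$ replaced throughout by $(\widetilde\bW,\widetilde\bcalM_i,\widetilde\bDelta_i)$.
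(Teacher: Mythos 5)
Your proposal is correct and is exactly the argument the paper intends: the paper's proof of this proposition is the one-line remark that it is "essentially the same as that of" Proposition~\ref{def: observation projectors}, and you have carried out that same substitution $(\bW,\bcalM_i,\bDelta_i)\mapsto(\widetilde\bW,\widetilde\bcalM_i,\widetilde\bDelta_i)$, including the key factorization $\widetilde\bcalM_i=\bSigma\widetilde\bW(\bI_n+\widetilde\bDelta_i)^{-1}\widetilde\bW^\top$ and the normalization $\widetilde\bW^\top\bSigma\widetilde\bW=\bI_n$. No gaps.
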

The proof is essentially the same as that of \Cref{def: observation projectors}.

As in the deterministic case, the spectral projectors $\widetilde\bcP$, $\widetilde\bcQ$, and $\widetildeto{\bcQ}{\bcN}$ decompose the idealized misfit $\tilde\btheta_i^{(j)}$ into three components exhibiting differing convergence behaviors, which we characterize in the next section.

\subsubsection{Stochastic EKI: Convergence analysis in observation space $\R^n$}\label{sssec: stochastic observation convergence}

We begin by showing that the positive eigenvalues of~\eqref{eq: GEV upper bound} decay at a $1/i$ rate:
\begin{corollary}\label{cor: stochastic eigenvalue rates}
    If $\tilde\delta_{\ell,0}>0$, then for all $i$, $\tilde\delta_{\ell,i} = (\frac1{\tilde\delta_{\ell,0}} + i)^{-1}$.
\end{corollary}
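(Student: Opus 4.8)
The plan is to read off the recurrence from \Cref{prop: 9c eigenvalue recurrence} and convert it into a linear recurrence for the reciprocals. Since $\tilde\delta_{\ell,i+1} = \tilde\delta_{\ell,i}/(1+\tilde\delta_{\ell,i})$, and \Cref{prop: stochastic observation basis} guarantees that a positive eigenvalue stays positive (so no division-by-zero issues arise), taking reciprocals of both sides gives $1/\tilde\delta_{\ell,i+1} = (1+\tilde\delta_{\ell,i})/\tilde\delta_{\ell,i} = 1/\tilde\delta_{\ell,i} + 1$. This is an arithmetic progression in $1/\tilde\delta_{\ell,i}$ with common difference $1$.

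The second step is a one-line induction on $i$: the base case $i=0$ is the identity $1/\tilde\delta_{\ell,0} = 1/\tilde\delta_{\ell,0}$, and the inductive step is exactly the relation $1/\tilde\delta_{\ell,i+1} = 1/\tilde\delta_{\ell,i} + 1$ just derived, so that $1/\tilde\delta_{\ell,i} = 1/\tilde\delta_{\ell,0} + i$ for all $i\geq 0$. Rearranging yields $\tilde\delta_{\ell,i} = \bigl(1/\tilde\delta_{\ell,0} + i\bigr)^{-1}$, which is the claimed formula. (One can alternatively observe that this is the stochastic analogue of \Cref{lem: reciprocal recurrence}, but with the cleaner recurrence $1/\tilde\delta_{\ell,i+1} = 1/\tilde\delta_{\ell,i}+1$ replacing $1/\delta_{\ell,i+1} = 1/\delta_{\ell,i}+2+\delta_{\ell,i}$, so no harmonic-sum bookkeeping is needed.)

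There is essentially no obstacle here: the only thing to be careful about is invoking \Cref{prop: stochastic observation basis} to justify that $\tilde\delta_{\ell,i}>0$ for all $i$ (so that the reciprocal manipulations are legitimate and the recurrence from \Cref{prop: 9c eigenvalue recurrence} applies at every step), after which the result is immediate. The exact $1/i$ asymptotic decay rate, $\tilde\delta_{\ell,i}\sim 1/i$, then follows trivially from the closed form, in contrast to the deterministic case where only bounds of the form $\delta_{\ell,i}\sim 1/(2i)$ were available through \Cref{prop: deterministic eigenvalue bounds}.
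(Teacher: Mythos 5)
Your proof is correct and matches the paper's own argument, which is the same one-line induction on the reciprocal recurrence $1/\tilde\delta_{\ell,i+1} = 1/\tilde\delta_{\ell,i}+1$ obtained from \Cref{prop: 9c eigenvalue recurrence}. (Positivity at every step also follows directly from the recurrence itself, so the appeal to \Cref{prop: stochastic observation basis} is fine but not strictly needed.)
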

\begin{proof}
    The proof follows by induction with the base case $i=1$ established directly from \Cref{prop: 9c eigenvalue recurrence}.
\end{proof}

We now turn our attention to the behavior of the idealized misfit $\tilde\btheta_{i}^{(j)}$ as $i\to\infty$. Let $\widetilde\bcalM_{ik} = \prod_{j=k}^i\widetilde\bcalM_j$. 
Note that~\eqref{eq: simplified stochastic misfit iteration} implies
\begin{align}\label{eq: mean field misfit from zero}
    \tilde\btheta_{i+1}^{(j)} = \widetilde\bcalM_{i0}\tilde\btheta_0^{(j)} + (\bI-\widetilde\bcalM_i)\beps_i^{(j)} +\sum_{k=0}^{i-1}\widetilde\bcalM_{i,k+1}(\bI-\widetilde\bcalM_k)\beps_k^{(j)}.
\end{align}
We first show a lemma expressing the operators $\widetilde\bcalM_{ik}$ and $(\bI-\widetilde\bcalM_i)$ in terms of eigenvectors and eigenvalues of~\eqref{eq: GEV upper bound}:
\begin{lemma}\label{lem: stoch misfit expressions}
The following hold:
\begin{align}
    \bI - \widetilde\bcalM_i = \sum_{\ell=1}^r\tilde\delta_{\ell,i+1}\bGamma\widetilde\bw_\ell\widetilde\bw_\ell^\top; \quad \widetilde\bcalM_{ik} = \sum_{\ell=1}^r\frac{\tilde\delta_{\ell,i+1}}{\tilde\delta_{\ell,k}}\bGamma\tilde\bw_\ell\tilde\bw_\ell^\top + \widetilde\bcQ + \widetildeto{\bcQ}{\bcN}.
\end{align}
\end{lemma}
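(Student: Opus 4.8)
The plan is to derive both identities from a single spectral representation of the idealized misfit map $\widetilde\bcalM_i$ in the $\bSigma$-orthonormal eigenbasis $\{\widetilde\bw_\ell\}_{\ell=1}^n$ of the pencils $(\bC_i,\bSigma)$ established in \Cref{prop: stochastic observation basis}, thereby reducing everything to the scalar recurrence for the eigenvalues $\tilde\delta_{\ell,i}$. First I would note that $(\bC_i+\bSigma)\widetilde\bw_\ell = (1+\tilde\delta_{\ell,i})\bSigma\widetilde\bw_\ell$ gives $(\bC_i+\bSigma)^{-1}\bSigma\widetilde\bw_\ell = (1+\tilde\delta_{\ell,i})^{-1}\widetilde\bw_\ell$; combining this with the resolution of the identity $\bSigma^{-1} = \widetilde\bW\widetilde\bW^\top = \sum_{\ell=1}^n\widetilde\bw_\ell\widetilde\bw_\ell^\top$ (a consequence of the normalization $\widetilde\bW^\top\bSigma\widetilde\bW=\bI$) yields $\widetilde\bcalM_i = \bSigma(\bC_i+\bSigma)^{-1} = \sum_{\ell=1}^n (1+\tilde\delta_{\ell,i})^{-1}\,\bSigma\widetilde\bw_\ell\widetilde\bw_\ell^\top$. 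Because \Cref{prop: stochastic observation basis} gives $\tilde\delta_{\ell,i}=0$ for $\ell>r$, the tail block $\sum_{\ell=r+1}^n\bSigma\widetilde\bw_\ell\widetilde\bw_\ell^\top$ coincides with $\widetilde\bcQ+\widetildeto{\bcQ}{\bcN}$ as defined in \Cref{prop: stoch meas spec proj}.

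For the first identity I would subtract this spectral form from $\bI=\bSigma\bSigma^{-1}=\sum_{\ell=1}^n\bSigma\widetilde\bw_\ell\widetilde\bw_\ell^\top$: the $\ell>r$ terms cancel, while for $\ell\le r$ the coefficient on $\bSigma\widetilde\bw_\ell\widetilde\bw_\ell^\top$ is $1-(1+\tilde\delta_{\ell,i})^{-1}=\tilde\delta_{\ell,i}/(1+\tilde\delta_{\ell,i})$, which equals $\tilde\delta_{\ell,i+1}$ by the recurrence in \Cref{prop: 9c eigenvalue recurrence}. For the second identity I would use that the rank-one summands multiply cleanly, since $\widetilde\bw_\ell^\top\bSigma\widetilde\bw_m$ vanishes unless $\ell=m$, to obtain by induction on the number of factors $\widetilde\bcalM_{ik} = \prod_{j=k}^i\widetilde\bcalM_j = \sum_{\ell=1}^n\Big(\prod_{j=k}^i(1+\tilde\delta_{\ell,j})^{-1}\Big)\bSigma\widetilde\bw_\ell\widetilde\bw_\ell^\top$. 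For $\ell>r$ every factor is $1$, reassembling $\widetilde\bcQ+\widetildeto{\bcQ}{\bcN}$; for $\ell\le r$, rewriting $(1+\tilde\delta_{\ell,j})^{-1}=\tilde\delta_{\ell,j+1}/\tilde\delta_{\ell,j}$ via \Cref{prop: 9c eigenvalue recurrence} makes the product telescope to $\tilde\delta_{\ell,i+1}/\tilde\delta_{\ell,k}$ (one could alternatively substitute the closed form of \Cref{cor: stochastic eigenvalue rates}). Collecting the two index ranges gives the stated expression.

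The argument is essentially bookkeeping; the only care needed is that the telescoping product be well-defined, i.e.\ that the positive eigenvalues $\tilde\delta_{\ell,j}$ stay positive for every $j$ along the iteration — which is exactly the content of the first sentence of \Cref{prop: stochastic observation basis} — and that the index ranges be tracked consistently so the zero-eigenvalue block is correctly identified with $\widetilde\bcQ+\widetildeto{\bcQ}{\bcN}$. I do not expect a genuine obstacle here: the lemma repackages the spectral data already established in \Cref{prop: 9c eigenvalue recurrence,prop: stochastic observation basis,prop: stoch meas spec proj}.
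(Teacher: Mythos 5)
Your proposal is correct and follows essentially the same route as the paper: both write $\widetilde\bcalM_i$ in the $\bSigma$-orthonormal eigenbasis as $\sum_{\ell=1}^n(1+\tilde\delta_{\ell,i})^{-1}\bSigma\widetilde\bw_\ell\widetilde\bw_\ell^\top$, identify the zero-eigenvalue block with $\widetilde\bcQ+\widetildeto{\bcQ}{\bcN}$, and use the recurrence $\frac1{1+\tilde\delta_{\ell,i}}=\frac{\tilde\delta_{\ell,i+1}}{\tilde\delta_{\ell,i}}$ to get the first identity and to telescope the product for $\widetilde\bcalM_{ik}$. The extra details you supply (resolution of the identity, $\bSigma$-orthogonality making the rank-one factors multiply cleanly) are exactly what the paper leaves implicit.
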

\begin{proof}
   Note that $\widetilde\bcalM_i = \bGamma\widetilde\bW(\bI+\widetilde\bDelta_i)^{-1}\widetilde\bW^\top$, 
   so \begin{align*}
    \widetilde\bcalM_i = \sum_{\ell=1}^n\frac1{1+\tilde\delta_{\ell,i}}\bGamma\tilde\bw_\ell\tilde\bw_\ell^\top = \sum_{\ell=1}^r\frac1{1+\tilde\delta_{\ell,i}}\bGamma\tilde\bw_\ell\tilde\bw_\ell^\top + \widetilde\bcQ + \widetildeto{\bcQ}{\bcN}.
   \end{align*}
   Thus, 
   \begin{align*}
       \bI-\widetilde\bcalM_i = \sum_{\ell=1}^r \frac{\tilde\delta_{\ell,i}}{1+\tilde\delta_{\ell,i}}\bGamma\tilde\bw_\ell\tilde\bw_\ell^\top, 
        \quad 
       \widetilde\bcalM_{ik} = \sum_{\ell=1}^r \left(\prod_{j=k}^i\frac1{1+\tilde\delta_{\ell,j}}\right)\bGamma\tilde\bw_\ell\tilde\bw_\ell^\top + \tilde\bcQ + \widetildeto{\bcQ}{\bcN}.
   \end{align*}
   Recall from~\cref{prop: 9c eigenvalue recurrence} that $\tilde\delta_{\ell,i+1} = \frac{\tilde\delta_{\ell,i}}{1+\tilde\delta_{\ell,i}}$, which implies $\frac1{1+\tilde\delta_{\ell,i}} = \frac{\tilde\delta_{\ell,i+1}}{\tilde\delta_{\ell,i}}$. Substituting these relationships into the above expression yields the desired claims.
\end{proof}

Our main result concerns the convergence of the idealized misfit iteration~\eqref{eq: simplified stochastic misfit iteration}. 
\begin{theorem}\label{thm: stochastic observation space convergence}
    For all particles $j = 1,2,\ldots,J$, the following hold:
    \begin{enumerate}[label=(\alph*)]
        \item as $i\to\infty$, $\E[\|\widetilde\bcP\tilde\btheta_i^{(j)}\|]=\mathcal{O}(i^{-\frac12})$, 
        \item for all $i\geq0$, $\widetilde\bcQ\tilde\btheta_i^{(j)} = \widetilde\bcQ\tilde\btheta_0^{(j)}$, and
        \item for all $i\geq 0$, $\widetildeto{\bcQ}{\bcN}\tilde\btheta_i^{(j)} = \widetildeto{\bcQ}{\bcN}\tilde\btheta_0^{(j)}$.
    \end{enumerate}
\end{theorem}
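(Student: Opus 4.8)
The plan is to apply the three complementary spectral projectors $\widetilde\bcP$, $\widetilde\bcQ$, $\widetildeto{\bcQ}{\bcN}$ of \Cref{prop: stoch meas spec proj} to the closed-form solution~\eqref{eq: mean field misfit from zero} of the idealized misfit iteration, and then reduce each projected component using the eigenvector expressions for $\bI-\widetilde\bcalM_i$ and $\widetilde\bcalM_{ik}$ from \Cref{lem: stoch misfit expressions}. The one genuinely new ingredient beyond the deterministic argument of \Cref{thm: deterministic observation particles} is the treatment of the accumulated stochastic forcing, and the key algebraic observation that makes this tractable is a telescoping identity: writing $\widetilde\bcalM_{i,k+1} = \sum_{\ell=1}^r \frac{\tilde\delta_{\ell,i+1}}{\tilde\delta_{\ell,k+1}}\bSigma\widetilde\bw_\ell\widetilde\bw_\ell^\top + \widetilde\bcQ + \widetildeto{\bcQ}{\bcN}$ and $\bI-\widetilde\bcalM_k = \sum_{m=1}^r\tilde\delta_{m,k+1}\bSigma\widetilde\bw_m\widetilde\bw_m^\top$, the $\bSigma$-orthonormality $\widetilde\bW^\top\bSigma\widetilde\bW=\bI$ forces $\widetilde\bcalM_{i,k+1}(\bI-\widetilde\bcalM_k) = \sum_{\ell=1}^r\tilde\delta_{\ell,i+1}\bSigma\widetilde\bw_\ell\widetilde\bw_\ell^\top = \bI-\widetilde\bcalM_i$, independently of $k$. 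Hence the entire noise contribution in~\eqref{eq: mean field misfit from zero} collapses to $(\bI-\widetilde\bcalM_i)\sum_{k=0}^i\beps_k^{(j)}$.

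For parts (b) and (c): since $\Ran(\bI-\widetilde\bcalM_i)\subseteq\Ran(\widetilde\bcP)$ and the projectors are complementary, both $\widetilde\bcQ$ and $\widetildeto{\bcQ}{\bcN}$ annihilate every forcing term; and applying $\widetilde\bcQ$ (resp.\ $\widetildeto{\bcQ}{\bcN}$) to $\widetilde\bcalM_{i0}\tilde\btheta_0^{(j)}$ leaves only $\widetilde\bcQ\tilde\btheta_0^{(j)}$ (resp.\ $\widetildeto{\bcQ}{\bcN}\tilde\btheta_0^{(j)}$), using $\widetilde\bcQ\bSigma\widetilde\bw_\ell=\boldsymbol{0}$ for $\ell\le r$ together with idempotency and complementarity exactly as in \Cref{thm: deterministic observation particles}. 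This gives $\widetilde\bcQ\tilde\btheta_i^{(j)}=\widetilde\bcQ\tilde\btheta_0^{(j)}$ and $\widetildeto{\bcQ}{\bcN}\tilde\btheta_i^{(j)}=\widetildeto{\bcQ}{\bcN}\tilde\btheta_0^{(j)}$ for all $i$.

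For part (a): projecting~\eqref{eq: mean field misfit from zero} with $\widetilde\bcP$ and shifting the index gives, for $i\ge1$, $\widetilde\bcP\tilde\btheta_i^{(j)} = \big(\sum_{\ell=1}^r\frac{\tilde\delta_{\ell,i}}{\tilde\delta_{\ell,0}}\bSigma\widetilde\bw_\ell\widetilde\bw_\ell^\top\big)\tilde\btheta_0^{(j)} + (\bI-\widetilde\bcalM_{i-1})\sum_{k=0}^{i-1}\beps_k^{(j)}$. By \Cref{cor: stochastic eigenvalue rates}, $\frac{\tilde\delta_{\ell,i}}{\tilde\delta_{\ell,0}}=(1+i\tilde\delta_{\ell,0})^{-1}$ and $\tilde\delta_{\ell,i}=(\tilde\delta_{\ell,0}^{-1}+i)^{-1}$ are both $\mathcal{O}(1/i)$, with the implied constant controlled by the smallest positive eigenvalue $\tilde\delta_{r,0}$; together with the fixed constants $\|\bSigma\widetilde\bW_{1:r}\|$ and $\|\widetilde\bW_{1:r}\|$ this makes the first (deterministic) term $\mathcal{O}(1/i)$ and bounds $\|\bI-\widetilde\bcalM_{i-1}\|=\mathcal{O}(1/i)$. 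For the stochastic term, mean-zero independence of the $\beps_k^{(j)}\sim\cN(\boldsymbol{0},\bSigma)$ gives $\E\big[\big\|\sum_{k=0}^{i-1}\beps_k^{(j)}\big\|^2\big] = i\,\mathrm{tr}(\bSigma)$, so by Jensen's inequality $\E\big[\big\|\sum_{k=0}^{i-1}\beps_k^{(j)}\big\|\big]\le\sqrt{i\,\mathrm{tr}(\bSigma)}$; multiplying by the $\mathcal{O}(1/i)$ operator-norm bound yields $\mathcal{O}(i^{-1/2})$. Summing the two contributions gives $\E[\|\widetilde\bcP\tilde\btheta_i^{(j)}\|]=\mathcal{O}(i^{-1/2})$.

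The main obstacle is handling the stochastic forcing term: written out naively it is a sum over $k$ of products of distinct iteration maps acting on independent noise, with no a priori reason to behave well. The crux is recognizing the $k$-independence of $\widetilde\bcalM_{i,k+1}(\bI-\widetilde\bcalM_k)$, after which the expectation bound reduces to an elementary second-moment estimate. It is worth noting the rate interplay this reveals: the idealized stochastic eigenvalues decay at the faster $1/i$ rate (versus $1/\sqrt i$ for deterministic EKI), but this is precisely cancelled by the $\sqrt i$ growth of the accumulated perturbations, so the net convergence rate in $\Ran(\widetilde\bcP)$ is again $1/\sqrt i$.
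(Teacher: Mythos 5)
Your proposal is correct and follows essentially the same route as the paper: the same closed-form expansion~\eqref{eq: mean field misfit from zero}, the same use of \Cref{lem: stoch misfit expressions} to collapse the accumulated noise into $(\bI-\widetilde\bcalM_{i-1})\sum_{k=0}^{i-1}\beps_k^{(j)}$ (your telescoping identity $\widetilde\bcalM_{i,k+1}(\bI-\widetilde\bcalM_k)=\bI-\widetilde\bcalM_i$ is exactly the computation behind the paper's displayed formula for $\widetilde\bcP\tilde\btheta_i^{(j)}$), and the same annihilation argument for $\widetilde\bcQ$ and $\widetildeto{\bcQ}{\bcN}$. The only cosmetic difference is in the final estimate, where you bound the noise term by $\|\bI-\widetilde\bcalM_{i-1}\|\cdot\E\big[\big\|\sum_k\beps_k^{(j)}\big\|\big]$ via Jensen while the paper computes the trace of the covariance of $\widetilde\bcP\tilde\btheta_i^{(j)}$ directly; both yield the same $\mathcal{O}(i^{-1/2})$ rate.
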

\begin{proof}
    For all $i$, $\widetilde\bcQ$ is a spectral projector of $\widetilde\bcalM_i$ (\Cref{prop: stoch meas spec proj}) and $\Ran(\widetilde\bcQ)\subset\Ker(\bI-\widetilde\bcalM_i)$ (\Cref{lem: stoch misfit expressions}). Thus, applying $\widetilde\bcQ$ to~\eqref{eq: mean field misfit from zero} yields:
    \begin{align*}
        \widetilde\bcQ\tilde\btheta_{i+1}^{(j)} = \widetilde\bcalM_{i0}\widetilde\bcQ\tilde\btheta_0^{(j)} = \widetilde\bcQ\tilde\btheta_0^{(j)}\quad \text{for all }i,
    \end{align*}
    which gives (b). The same argument holds for $\widetildeto{\bcQ}{\bcN}\tilde\btheta_i^{(j)}$, which gives (c). To show (a), note that~\eqref{eq: mean field misfit from zero} and  \Cref{lem: stoch misfit expressions} for $i\geq 1$ leads to:
    \begin{align*}
         \widetilde\bcP\tilde\btheta_{i}^{(j)} = 
        \sum_{\ell=1}^r\frac{\tilde\delta_{\ell,i}}{\tilde\delta_{\ell,0}}\bGamma\tilde\bw_\ell\tilde\bw_\ell^\top\left(\widetilde\bcP\tilde\btheta_0^{(j)}\right) +\sum_{\ell=1}^r \tilde\delta_{\ell,i}\bGamma\tilde\bw_\ell\tilde\bw_\ell^\top \left(\sum_{k=0}^{i-1}\beps_k^{(j)}\right).
    \end{align*}
    Let $c_\ell=\frac1{\tilde\delta_{\ell,0}}$. Then, from \Cref{cor: stochastic eigenvalue rates} we have:
    \begin{align*}
        \widetilde\bcP\tilde\btheta_{i}^{(j)} = \sum_{\ell=1}^r \frac{c_\ell}{i+c_\ell}\bGamma\tilde\bw_\ell\tilde\bw_\ell^\top\!\left(\widetilde\bcP\tilde\btheta_0^{(j)}\right) + \sum_{\ell=1}^r \frac{1}{i+c_\ell}\bGamma\tilde\bw_\ell\tilde\bw_\ell^\top\!\left(\sum_{k=0}^{i-1}\beps_k^{(j)}\right),
    \end{align*}
    so that
    \begin{align*}
        \mathbb{E}[\widetilde\bcP\tilde\btheta_{i}^{(j)}] = \sum_{\ell=1}^r \frac{c_\ell}{i+c_\ell}\bGamma\tilde\bw_\ell\tilde\bw_\ell^\top\left(\widetilde\bcP\tilde\btheta_0^{(j)}\right)\ \mbox{and}\  {\small\cov}\!\left(\widetilde\bcP\tilde\btheta_{i}^{(j)}\right)= \sum_{\ell=1}^r \frac{i}{(i+c_\ell)^2}\bGamma\tilde\bw_\ell\tilde\bw_\ell^\top\bGamma.
    \end{align*}
    Using Jensen's inequality and noting $\mathsf{trace}(\bGamma\tilde\bw_\ell\tilde\bw_\ell^\top\bGamma)=\tilde\bw_\ell^\top\bGamma^{\frac12}\bGamma\bGamma^{\frac12}\tilde\bw_\ell\leq \|\bGamma\|$,  
    {\small
    \begin{equation}\label{Pbnd}
    \E[\|\widetilde\bcP\tilde\btheta_i^{(j)}\|]^2 \leq \E\left[\|\widetilde\bcP\tilde\btheta_i^{(j)}\|^2\right]\leq \mathsf{trace}({\small\cov}\!\left(\widetilde\bcP\tilde\btheta_{i}^{(j)}\right))\leq
        \|\bGamma\|\sum_{\ell=1}^r \frac{i}{(i+c_\ell)^2}\leq \|\bGamma\|\left(\frac{r}{i}\right),
    \end{equation}  }
    which is $\mathcal{O}(i^{-1})$, as $i\rightarrow \infty$. This gives the conclusion for (a). 
\end{proof}

To state an analogue of \Cref{cor: obs particle convergence deterministic} for stochastic EKI, one may verify that the following `idealized particle iteration', 
\begin{align}\label{eq: idealized particle iteration stochastic}
    \tilde\bv_{i+1}^{(j)} &= \tilde\bv_i^{(j)} + \tbB_i\bH^\top (\bH\tbB_i\bH^\top + \bGamma)^{-1} (\by+\beps_i^{(j)}-\bH\tilde\bv_i^{(j)}),
\end{align}
together with the definition $\tilde\btheta_i^{(j)} = \bH\tilde\bv_i^{(j)}-\by\equiv \tilde\bh_i^{(j)}-\by$,
leads to the idealized misfit iteration~\eqref{eq: simplified stochastic misfit iteration}. Then, \Cref{thm: stochastic observation space convergence} yields the following corollary:
\begin{corollary} 
     $\displaystyle   \lim_{i\to\infty}\tilde\bh_i^{(j)} = \widetilde\bcP\by + \widetilde\bcQ\tilde\bh_0^{(j)} + \widetildeto{\bcQ}{\bcN}\tilde\bh_0^{(j)},$ for each $j=1,2,\ldots,J$.
\end{corollary}

\begin{remark}
    \Cref{thm: stochastic observation space convergence} shows that our characterization of EKI's differing convergence behaviors in its fundamental subspaces carries over from deterministic EKI (\Cref{thm: deterministic observation particles}) to stochastic EKI. This understanding of stochastic EKI convergence is new: while the work~\cite{bungert2023complete} showed that convergence of the \textit{ensemble mean} under the mean-field limit can be characterized in terms of two subspaces under the assumption that $\bC_i$ is full rank, \Cref{thm: stochastic observation space convergence} generalizes to allow rank-deficient $\bC_i$ and provides a decomposition of the convergence behaviors of \textit{individual particles} under the idealized iteration~\eqref{eq: simplified stochastic misfit iteration}. 
    We note that while \Cref{thm: stochastic observation space convergence} shows convergence of the expected norm of the observable populated component $\widetilde\bcP\btheta_i^{(j)}$, stronger conclusions can be drawn. Indeed, the proof of \Cref{thm: stochastic observation space convergence} shows that $\widetilde\bcP\tilde\btheta_i^{(j)}\rightarrow \mathbf{0}$ in the mean-square sense. 
    We show in the appendix that $\|\widetilde\bcP\tilde\btheta_i^{(j)}\|\rightarrow 0$  \textit{in probability} at a rate that can be made arbitrarily close to $1/\sqrt{i}$ and that in the $i\to\infty$ limit, $\widetilde\bcP\tilde\btheta_i^{(j)}$ converges to zero \textit{almost surely}.
\end{remark}

\subsection{Stochastic EKI: Analysis in state space $\R^d$}
\label{ssec: stoch state space}
We now analyze the behavior of the particles $\bv_i^{(j)}$ in the state space $\R^d$. In~\Cref{sssec: stoch LS iteration}, we define an idealized residual iteration (the state-space counterpart to the idealized misfit iteration from \Cref{ssec: stoch misfit iteration}). We provide a spectral analysis of this iteration and definitions of the fundamental subspaces in~\Cref{sssec: stoch subspaces}, and prove convergence in \Cref{sssec: stoch state convergence}.

\subsubsection{Stochastic EKI: An idealized least-squares residual iteration}\label{sssec: stoch LS iteration}
We again consider the state space residual of the $j$th particle at the $i$th iteration: $\bomega_i^{(j)} = \bv_i^{(j)} - \bv^*$. Under the stochastic update equation of \Cref{alg:EKI}, the state space residual satisfies:
\begin{align}\label{eq: stoch residual iteration}
    \bomega_{i+1}^{(j)} = \bbM_i\bomega_i^{(j)} + \bK_i\beps_i^{(j)},
\end{align}
where $\bbM_i = (\bI + \bC_i\fisher)^{-1}$ and $\bK_i = \bC_i\bH^\top (\bH\bC_i\bH^\top +\bGamma)^{-1}$ as before. Note that $\bbM_i = \bI - \bK_i\bH$. 
Consider the evolution of the empirical particle covariance $\bC_i \equiv \sfCov[\bv_i^{(1:J)}]$.  We take the expectation of $\bC_{i+1}$ conditioned on previously applied perturbations $\beps_{0:i-1}^{(1:J)}$ (analogous to~\eqref{eq: stoch covariance conditional iteration}):
\begin{align*}
\E[\bC_{i+1}|\beps_{0:i-1}^{(1:J)}] = \bbM_i\bC_i\bbM_i^\top + \bK_i\bGamma\bK_i^\top =\bC_i-\bC_i\bH^\top(\bH\bC_i\bH^\top +\bGamma)^{-1}\bH\bC_i =\bbM_i\bC_i.
\end{align*}
In what follows, we define
\begin{align}\label{eq: G def}
    \tbC_0 = \bC_0 \quad \text{and} \quad \tbC_{i+1} = \widetilde\bbM_i \tbC_i,  \quad \text{with} \quad\widetilde\bbM_i = (\bI + \tbC_i\fisher)^{-1}.
\end{align}
We now define $\widetilde\bK_i = \tbC_i\bH^\top(\bH\tbC_i\bH^\top + \bGamma)^{-1}$ and consider the iteration:
\begin{align}\label{eq: stoch simplified LS iteration}
    \tilde\bomega_0^{(j)} = \bomega_0^{(j)}, \qquad \tilde\bomega_{i+1}^{(j)} = \widetilde\bbM_i\tilde\bomega_i^{(j)} + \widetilde\bK_i\beps_i^{(j)}.
\end{align}
One may verify that \eqref{eq: G def} leads to \eqref{eq: stoch C definition} with $\tbB_i = \bH\tbC_i\bH^\top$.
The expression~\eqref{eq: stoch simplified LS iteration} is therefore a state-space analogue of the idealized iteration~\eqref{eq: simplified stochastic misfit iteration} in the observation space. 
As in the observation space, the idealized covariance iteration in the state space~\eqref{eq: G def} reflects the conditional expectation of the true state-space covariance iteration, allowing the true iteration~\eqref{eq: stoch residual iteration} to be interpreted as a particle approximation to the idealized iteration~\eqref{eq: stoch simplified LS iteration}.

\subsubsection{Stochastic EKI: Decomposition of state space $\R^d$}\label{sssec: stoch subspaces}
To analyze the convergence of the idealized state space residual iteration~\eqref{eq: stoch simplified LS iteration}, we consider the following eigenvalue problem:
\begin{align}\label{eq: stoch state eigenvalue}
    \tbC_i\fisher\tilde\bu_{\ell,i} = \tilde\delta_{\ell,i}'\tilde\bu_{\ell,i}.
\end{align}
We now show how the eigenvectors and eigenvalues of~\eqref{eq: stoch state eigenvalue} are related to those of~\eqref{eq: GEV upper bound}:
\begin{proposition}\label{prop: stoch state eigenvectors}
    Let $\{\widetilde\bw_\ell\}_{\ell=1}^n$ denote eigenvectors of~\eqref{eq: GEV upper bound} ordered as described in \Cref{prop: stochastic observation basis}, and recall that the leading $r$ eigenvectors correspond to positive $\tilde\delta_{\ell,i}$. For $\ell=1,\ldots,r$, define $\tilde\bu_\ell=\frac1{\delta_{\ell,i}}\tbC_i\bH^\top\widetilde\bw_\ell$. Then, for all $\ell\leq h$, $\tilde\bu_\ell$ is an eigenvector of~\eqref{eq: stoch state eigenvalue} with eigenvalue $\tilde\delta_{\ell,i}'=\tilde\delta_{\ell,i}$. Conversely, for all $\ell\leq h$, $\widetilde\bw_\ell=\bGamma^{-1}\bH\tilde\bu_\ell$.
\end{proposition}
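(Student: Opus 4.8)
The plan is to transcribe the proof of \Cref{prop: deterministic state eigen basis} almost verbatim, replacing the deterministic covariance $\bGamma_i$ by the idealized covariance $\bG_i$, the map $\bbM_i$ by $\widetilde\bbM_i=(\bI+\bG_i\fisher)^{-1}$, and $\bH\bGamma_i\bH^\top$ by $\bC_i=\bH\bG_i\bH^\top$ (as noted after \eqref{eq: stoch simplified LS iteration}). As in the deterministic statement, the definition of the $\tilde\bu_\ell$ should be completed on the zero--eigenvalue range: if $h>r$, set $\tilde\bu_\ell=\bH^+\bSigma\widetilde\bw_\ell$ for $r<\ell\le h$. Two things then need to be checked: (i)~at a fixed iteration $i$, the vectors $\tilde\bu_\ell$ satisfy \eqref{eq: stoch state eigenvalue} with the asserted eigenvalues and obey the converse relation $\widetilde\bw_\ell=\bSigma^{-1}\bH\tilde\bu_\ell$; and (ii)~these vectors may be taken independent of $i$.

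For (i) I would fix $i$ and substitute directly. For $\ell\le r$ one has $\tilde\delta_{\ell,i}>0$ and $\bC_i\widetilde\bw_\ell=\tilde\delta_{\ell,i}\bSigma\widetilde\bw_\ell$ from \eqref{eq: GEV upper bound}, so
$$
\bG_i\fisher\tilde\bu_\ell=\tfrac{1}{\tilde\delta_{\ell,i}}\,\bG_i\bH^\top\bSigma^{-1}\bC_i\widetilde\bw_\ell=\bG_i\bH^\top\widetilde\bw_\ell=\tilde\delta_{\ell,i}\tilde\bu_\ell,
\qquad
\bSigma^{-1}\bH\tilde\bu_\ell=\tfrac{1}{\tilde\delta_{\ell,i}}\,\bSigma^{-1}\bC_i\widetilde\bw_\ell=\widetilde\bw_\ell .
$$
For $r<\ell\le h$, \Cref{prop: stochastic observation basis} gives $\bSigma\widetilde\bw_\ell\in\Ran(\bH)$, hence $\bH\bH^+\bSigma\widetilde\bw_\ell=\bSigma\widetilde\bw_\ell$ because $\bH\bH^+$ is the $\bSigma^{-1}$-orthogonal projector onto $\Ran(\bH)$; therefore $\bSigma^{-1}\bH\tilde\bu_\ell=\widetilde\bw_\ell$ and $\bG_i\fisher\tilde\bu_\ell=\bG_i\bH^\top\bSigma^{-1}(\bH\bH^+\bSigma\widetilde\bw_\ell)=\bG_i\bH^\top\widetilde\bw_\ell$. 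It remains to see that $\bG_i\bH^\top\widetilde\bw_\ell=\mathbf{0}$, matching $\tilde\delta_{\ell,i}=0$: from $\widetilde\bw_\ell^\top\bC_i\widetilde\bw_\ell=\tilde\delta_{\ell,i}\,\widetilde\bw_\ell^\top\bSigma\widetilde\bw_\ell=0$ and $\bG_i\succeq\mathbf{0}$ (it is an idealized covariance) we get $\bG_i^{1/2}\bH^\top\widetilde\bw_\ell=\mathbf{0}$, and applying $\bG_i^{1/2}$ once more yields $\bG_i\bH^\top\widetilde\bw_\ell=\mathbf{0}$.

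For (ii) I would induct on $i$. Assuming $\bG_i\fisher\tilde\bu_\ell=\tilde\delta_{\ell,i}\tilde\bu_\ell$, the vector $\tilde\bu_\ell$ is also an eigenvector of $\widetilde\bbM_i=(\bI+\bG_i\fisher)^{-1}$ with eigenvalue $(1+\tilde\delta_{\ell,i})^{-1}$, so by \eqref{eq: G def}
$$
\bG_{i+1}\fisher\tilde\bu_\ell=\widetilde\bbM_i\,\bG_i\fisher\tilde\bu_\ell=\tilde\delta_{\ell,i}\,\widetilde\bbM_i\tilde\bu_\ell=\frac{\tilde\delta_{\ell,i}}{1+\tilde\delta_{\ell,i}}\,\tilde\bu_\ell=\tilde\delta_{\ell,i+1}\tilde\bu_\ell,
$$
the last step by the eigenvalue recurrence of \Cref{prop: 9c eigenvalue recurrence}; applying the same computation to $\bG_{i+1}\bH^\top\widetilde\bw_\ell=\widetilde\bbM_i\bG_i\bH^\top\widetilde\bw_\ell$ shows the defining formula $\tilde\bu_\ell=\tilde\delta_{\ell,i}^{-1}\bG_i\bH^\top\widetilde\bw_\ell$ reproduces the same vector at step $i+1$, while $\widetilde\bw_\ell=\bSigma^{-1}\bH\tilde\bu_\ell$ is visibly $i$-independent. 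I do not anticipate a real obstacle here: the argument is essentially a restatement of the deterministic proof, the only points needing a little care being the positive-semidefinite square-root step that upgrades $\bH\bG_i\bH^\top\widetilde\bw_\ell=\mathbf{0}$ to $\bG_i\bH^\top\widetilde\bw_\ell=\mathbf{0}$, and the observation that the one-sided idealized update $\bG_{i+1}=\widetilde\bbM_i\bG_i$ (as opposed to the two-sided deterministic update $\bbM_i\bGamma_i\bbM_i^\top$) already carries the eigenstructure forward, consistently with the $1/(1+\tilde\delta)$ rather than $1/(1+\delta)^2$ recurrence.
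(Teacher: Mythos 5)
Your proposal is correct and is essentially the argument the paper intends: the paper's proof of this proposition is literally ``analogous to that of \Cref{prop: deterministic state eigen basis},'' and you have carried out exactly that analogy, including the necessary completion $\tilde\bu_\ell=\bH^+\bSigma\widetilde\bw_\ell$ for $r<\ell\le h$ and the correct observation that the one-sided update $\bG_{i+1}=\widetilde\bbM_i\bG_i$ propagates the eigenstructure with the $1/(1+\tilde\delta)$ recurrence of \Cref{prop: 9c eigenvalue recurrence}. The direct substitution for fixed $i$, the positive-semidefinite square-root step upgrading $\bH\bG_i\bH^\top\widetilde\bw_\ell=\mathbf{0}$ to $\bG_i\bH^\top\widetilde\bw_\ell=\mathbf{0}$, and the induction in $i$ all check out.
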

The proof is analogous to that of \Cref{prop: deterministic state eigen basis}. Going forward we now drop the notational distinction between eigenvalues of~\eqref{eq: GEV upper bound} and of~\eqref{eq: stoch state eigenvalue}, taking $\tilde\delta_{\ell,i}'=\tilde\delta_{\ell,i}$ and constraining the eigenvalue index to $\ell\leq h$. 
We can now define spectral projectors of the idealized state-space residual iteration map $\widetilde\bbM_i$:
\begin{proposition}\label{prop: stoch state spec proj}
    Let $\tilde\bU_{k:\ell}\in\R^{d\times (\ell-k+1)}$ denote the $k$-through-$\ell$th columns of $\tilde\bU$. 
    Define $\widetilde\bbP = \tilde\bU_{1:r}\tilde\bU_{1:r}^\top\fisher$, $\widetilde\bbQ = \tilde\bU_{r+1:h}\tilde\bU_{r+1:h}^\top\fisher$, and $\widetilde\bbN = \bI - \widetilde\bbP - \widetilde\bbQ$.
    Then, $\widetilde\bbP$, $\widetilde\bbQ$, and $\widetilde\bbN$ are spectral projectors associated with $\widetilde\bbM_i$.
\end{proposition}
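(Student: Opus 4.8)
The plan is to follow the proof of \Cref{prop: det state spec proj} almost verbatim, with $\bGamma_i$ replaced by $\bG_i$, $\bbM_i$ by $\widetilde\bbM_i$, and the deterministic eigenvectors $\bu_\ell$ replaced by the $\tilde\bu_\ell$ of~\eqref{eq: stoch state eigenvalue}. The one structural input needed at the outset is the weighted orthonormalization $\tilde\bU^\top(\fisher)\tilde\bU = \bI_h$. This follows from the normalization $\widetilde\bW^\top\bSigma\widetilde\bW = \bI_n$ of the observation-space eigenvectors together with the correspondence $\widetilde\bw_\ell = \bSigma^{-1}\bH\tilde\bu_\ell$ for $\ell\le h$ from \Cref{prop: stoch state eigenvectors}, since then $\tilde\bu_k^\top(\fisher)\tilde\bu_\ell = (\bSigma^{-1}\bH\tilde\bu_k)^\top\bSigma(\bSigma^{-1}\bH\tilde\bu_\ell) = \widetilde\bw_k^\top\bSigma\widetilde\bw_\ell$, which is $1$ if $k=\ell$ and $0$ otherwise.

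First I would record, exactly as in the deterministic case, that $\widetilde\bbP^2 = \widetilde\bbP$, $\widetilde\bbQ^2 = \widetilde\bbQ$, and $\widetilde\bbP\widetilde\bbQ = \boldsymbol{0}$ follow immediately from $\tilde\bU^\top(\fisher)\tilde\bU = \bI_h$. Hence $\widetilde\bbP + \widetilde\bbQ = \tilde\bU\tilde\bU^\top(\fisher)$ is itself a projector, with $\Ran(\widetilde\bbP+\widetilde\bbQ)=\Ran(\tilde\bU)$ and $\Ker(\widetilde\bbP+\widetilde\bbQ)=\Ker(\fisher)=\Ker(\bH)$, so that $\widetilde\bbN = \bI - \widetilde\bbP - \widetilde\bbQ$ is the complementary projector, giving $\widetilde\bbP\widetilde\bbN = \widetilde\bbQ\widetilde\bbN = \boldsymbol{0}$ and $\widetilde\bbP+\widetilde\bbQ+\widetilde\bbN = \bI$.

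Next I would produce the eigenbasis form of $\widetilde\bbM_i$. Write $\widetilde\bDelta_{1:r}^{(i)} = \diag(\tilde\delta_{1,i},\dots,\tilde\delta_{r,i})$ and use $\bG_i(\fisher)\tilde\bU_{1:r} = \tilde\bU_{1:r}\widetilde\bDelta_{1:r}^{(i)}$ together with $\bG_i(\fisher)\tilde\bU_{r+1:h} = \boldsymbol{0}$, which holds because (by the counting argument given after \Cref{prop: deterministic state eigen basis}, applied here to $\bC_i=\bH\bG_i\bH^\top$) $\bG_i\fisher$ has exactly $r$ nonzero eigenvalues. Then multiplying the candidate expression $\tilde\bU_{1:r}(\bI+\widetilde\bDelta_{1:r}^{(i)})^{-1}\tilde\bU_{1:r}^\top(\fisher) + \widetilde\bbQ + \widetilde\bbN$ on the left by $\widetilde\bbM_i^{-1} = \bI + \bG_i(\fisher)$ returns $\bI$, exactly as in the proof of \Cref{prop: det state spec proj}; hence $\widetilde\bbM_i = \tilde\bU_{1:r}(\bI+\widetilde\bDelta_{1:r}^{(i)})^{-1}\tilde\bU_{1:r}^\top(\fisher) + \widetilde\bbQ + \widetilde\bbN$. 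With this expression in hand, the commutation relations $\widetilde\bbP\widetilde\bbM_i = \widetilde\bbM_i\widetilde\bbP$, and likewise for $\widetilde\bbQ$ and $\widetilde\bbN$, fall out by direct multiplication using $\tilde\bU^\top(\fisher)\tilde\bU = \bI_h$ and the mutual annihilation of $\widetilde\bbP,\widetilde\bbQ,\widetilde\bbN$, mirroring the deterministic computation line for line.

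The only genuinely new point, and the step I would be most careful about, is the bookkeeping around $\bG_i$ in place of the \emph{empirical} covariance $\bGamma_i$: one needs $\bG_i$ to be symmetric positive semidefinite (so that the pencil $(\bH\bG_i\bH^\top,\bSigma)$ and the pseudoinverse $\bH^+$ behave as used in \Cref{prop: stoch state eigenvectors}), which follows inductively from~\eqref{eq: G def} since $\widetilde\bbM_i\bG_i = \bG_i - \bG_i\bH^\top(\bH\bG_i\bH^\top+\bSigma)^{-1}\bH\bG_i$ is a Schur-complement-type expression; and one needs $\Ker(\bH)\subseteq\Ker(\bG_i\fisher)$ for all $i$ so that $\R^d$ splits exhaustively into the $r$-dimensional span of nonzero-eigenvalue vectors, an $(h-r)$-dimensional zero block inside $\Ran(\tilde\bU)$, and the $(d-h)$-dimensional block $\Ker(\bH)$ absorbed into $\Ran(\widetilde\bbN)$. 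Once these structural facts are noted, the argument is identical to that of \Cref{prop: det state spec proj}, which is why the statement can be asserted with the proof deferred to that analogue.
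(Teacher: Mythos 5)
Your proposal is correct and matches the paper's intent exactly: the paper's proof of this proposition is simply the statement that it is the same as that of \Cref{prop: det state spec proj}, and you carry out precisely that adaptation (weighted orthonormalization $\tilde\bU^\top(\fisher)\tilde\bU=\bI_h$, idempotence and mutual annihilation, the eigenbasis form of $\widetilde\bbM_i$, and the commutation relations). The additional remarks on the positive semidefiniteness of $\bG_i$ and the kernel bookkeeping are sound and, if anything, make explicit details the paper leaves implicit.
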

The proof is the same as that of \Cref{prop: det state spec proj}.

\subsubsection{Stochastic EKI: Convergence analysis in state space $\R^d$}\label{sssec: stoch state convergence}
We now consider the behavior of the state space residual $\tilde\bomega_i^{(j)}$ as $i\to\infty$. Let $\widetilde\bbM_{ik} = \prod_{j=k}^i\widetilde\bbM_j$. Note that \eqref{eq: stoch simplified LS iteration} implies:
\begin{align}\label{eq: stoch simplified LS from zero}
    \tilde\bomega_{i+1}^{(j)} = \widetilde\bbM_{i0}\tilde\bomega_0^{(j)} + \widetilde\bK_i\beps_i^{(j)} + \sum_{k=0}^{i-1}\widetilde\bbM_{i,k+1}\widetilde\bK_k\beps_k^{(j)}.
\end{align}
We first show a lemma expressing $\widetilde\bbM_{ik}$ and $\widetilde\bK_i$ in terms of the spectral projectors defined in \Cref{prop: stoch state spec proj} and the eigenvectors of \eqref{eq: stoch state eigenvalue} and \eqref{eq: GEV upper bound}.
\begin{lemma}\label{lem: stoch state M K expressions}
    The following hold:
    \begin{align}\label{eq: stoch tilde M K expressions}
        \widetilde\bbM_{ik} = \sum_{\ell=1}^r \frac{\tilde\delta_{\ell,i+1}}{\tilde\delta_{\ell,k}}\tilde\bu_\ell\tilde\bu_\ell^\top\fisher + \widetilde\bbQ + \widetilde\bbN, \qquad \widetilde\bK_i = \sum_{\ell=1}^r \tilde\delta_{\ell,i+1}\tilde\bu_\ell\tilde\bw_\ell^\top.
    \end{align}
\end{lemma}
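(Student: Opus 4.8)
The plan is to carry the argument of \Cref{lem: stoch misfit expressions} over to the state space, reusing the spectral‑projector form of $\widetilde\bbM_i$ that underlies \Cref{prop: stoch state spec proj}. By the same computation as in \Cref{prop: det state spec proj}, that form reads
\[
\widetilde\bbM_i = \tilde\bU_{1:r}\bigl(\bI + \diag(\tilde\delta_{1,i},\ldots,\tilde\delta_{r,i})\bigr)^{-1}\tilde\bU_{1:r}^\top\fisher + \widetilde\bbQ + \widetilde\bbN,
\]
where $\tilde\delta_{1,i},\ldots,\tilde\delta_{r,i}$ are the positive eigenvalues of \eqref{eq: stoch state eigenvalue}. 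Since $\widetilde\bbP$, $\widetilde\bbQ$, $\widetilde\bbN$ are complementary and commute with every $\widetilde\bbM_j$, the product $\widetilde\bbM_{ik} = \prod_{j=k}^i\widetilde\bbM_j$ decomposes blockwise: on $\Ran(\widetilde\bbQ)$ and on $\Ran(\widetilde\bbN)$ each factor is the identity, contributing $\widetilde\bbQ + \widetilde\bbN$, while on $\Ran(\widetilde\bbP)$ it reduces to $\tilde\bU_{1:r}\bigl(\prod_{j=k}^i(\bI + \diag(\tilde\delta_{1,j},\ldots,\tilde\delta_{r,j}))^{-1}\bigr)\tilde\bU_{1:r}^\top\fisher$.

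To close the first identity I would apply the eigenvalue recurrence. By \Cref{prop: stoch state eigenvectors} the eigenvalues of \eqref{eq: stoch state eigenvalue} agree with those of \eqref{eq: GEV upper bound}, so \Cref{prop: 9c eigenvalue recurrence} gives $\tilde\delta_{\ell,j+1} = \tilde\delta_{\ell,j}/(1+\tilde\delta_{\ell,j})$, hence $(1+\tilde\delta_{\ell,j})^{-1} = \tilde\delta_{\ell,j+1}/\tilde\delta_{\ell,j}$ (the denominators being nonzero for $\ell\le r$ by \Cref{prop: stochastic observation basis}). The $\ell$th diagonal product then telescopes to $\tilde\delta_{\ell,i+1}/\tilde\delta_{\ell,k}$, which yields $\widetilde\bbM_{ik} = \sum_{\ell=1}^r(\tilde\delta_{\ell,i+1}/\tilde\delta_{\ell,k})\,\tilde\bu_\ell\tilde\bu_\ell^\top\fisher + \widetilde\bbQ + \widetilde\bbN$.

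For $\widetilde\bK_i = \bG_i\bH^\top(\bH\bG_i\bH^\top+\bSigma)^{-1}$, I would first expand the resolvent in the $\bSigma$‑orthonormal eigenbasis $\{\widetilde\bw_\ell\}$ of \eqref{eq: GEV upper bound}: from $(\bH\bG_i\bH^\top+\bSigma)\widetilde\bw_\ell = (1+\tilde\delta_{\ell,i})\bSigma\widetilde\bw_\ell$ and $\bSigma^{-1} = \widetilde\bW\widetilde\bW^\top$ one obtains $(\bH\bG_i\bH^\top+\bSigma)^{-1} = \sum_{\ell=1}^n(1+\tilde\delta_{\ell,i})^{-1}\widetilde\bw_\ell\widetilde\bw_\ell^\top$, so $\widetilde\bK_i = \sum_{\ell=1}^n(1+\tilde\delta_{\ell,i})^{-1}(\bG_i\bH^\top\widetilde\bw_\ell)\widetilde\bw_\ell^\top$. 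I would then show that only the first $r$ summands survive: for $\ell\le r$, the definition of $\tilde\bu_\ell$ in \Cref{prop: stoch state eigenvectors} gives $\bG_i\bH^\top\widetilde\bw_\ell = \tilde\delta_{\ell,i}\tilde\bu_\ell$; for $h<\ell\le n$ we have $\widetilde\bw_\ell\in\Ker(\bH^\top)$; and for $r<\ell\le h$, $\widetilde\bw_\ell^\top\bH\bG_i\bH^\top\widetilde\bw_\ell = \tilde\delta_{\ell,i}\widetilde\bw_\ell^\top\bSigma\widetilde\bw_\ell = 0$ forces $\bG_i\bH^\top\widetilde\bw_\ell = \boldsymbol{0}$ since $\bG_i$ is positive semidefinite. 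Collecting the surviving terms gives $\sum_{\ell=1}^r\tilde\delta_{\ell,i}(1+\tilde\delta_{\ell,i})^{-1}\tilde\bu_\ell\widetilde\bw_\ell^\top$, and one last use of the recurrence turns this into $\sum_{\ell=1}^r\tilde\delta_{\ell,i+1}\tilde\bu_\ell\widetilde\bw_\ell^\top$, as claimed. A useful cross‑check is that left‑multiplying by $\bH$ and using $\bH\tilde\bu_\ell = \bSigma\widetilde\bw_\ell$ (\Cref{prop: stoch state eigenvectors}) recovers $\bH\widetilde\bK_i = \bI - \widetilde\bcalM_i$, matching \Cref{lem: stoch misfit expressions}.

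The computation is essentially routine bookkeeping, parallel to \Cref{lem: stoch misfit expressions} and to the proof of \Cref{thm: deterministic state space convergence}. The one step that needs genuine care, rather than formal manipulation, is the $\widetilde\bK_i$ identity: its summand couples a state‑space eigenvector $\tilde\bu_\ell$ to an observation‑space eigenvector $\widetilde\bw_\ell$, and one must verify that the zero‑eigenvalue directions $\widetilde\bw_\ell$ with $\ell>r$ drop out — the only place where positive semidefiniteness of $\bG_i$, not pure linear algebra, is used.
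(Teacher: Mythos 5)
Your proof of the first identity is correct and follows the paper's route exactly: write $\widetilde\bbM_i$ in its spectral form, use commutativity of the complementary projectors, and telescope the products $(1+\tilde\delta_{\ell,j})^{-1}=\tilde\delta_{\ell,j+1}/\tilde\delta_{\ell,j}$. For the $\widetilde\bK_i$ identity you take a slightly different route: the paper back-solves from $\widetilde\bK_i\bH=\bI-\widetilde\bbM_i$ together with $\widetilde\bw_\ell=\bSigma^{-1}\bH\tilde\bu_\ell$, whereas you expand the resolvent $(\bH\bG_i\bH^\top+\bSigma)^{-1}$ directly in the $\bSigma$-orthonormal eigenbasis and verify term by term that the directions with $\ell>r$ are annihilated (via $\Ker(\bH^\top)$ for $\ell>h$, and via positive semidefiniteness of $\bG_i$ for $r<\ell\le h$). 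Your version is marginally more self-contained on one point the paper leaves implicit: the relation $\widetilde\bK_i\bH=\bI-\widetilde\bbM_i$ by itself only determines $\widetilde\bK_i$ on $\Ran(\bH)$, so one must separately know that $\widetilde\bK_i$ vanishes on the $\bSigma^{-1}$-orthogonal complement of $\Ran(\bH)$ (as shown for $\bK_i$ in the proof of \Cref{prop_statespace_convergence}); your direct expansion makes that vanishing explicit. Note that your appeal to positive semidefiniteness of $\bG_i$ does require a one-line induction ($\bG_{i+1}=\bL(\bI+\bL^\top\fisher\bL)^{-1}\bL^\top$ for $\bG_i=\bL\bL^\top$ via the push-through identity), which you should state if you keep that route. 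Both arguments are sound and yield the same formulas.
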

\begin{proof}
    The fact that $\widetilde\bbM_i = \sum_{\ell=1}^r\frac1{1+\tilde\delta_{\ell,i}}\tilde\bu_\ell\tilde\bu_\ell^\top\fisher + \widetilde\bbQ + \widetilde\bbN$ can be verified directly by multiplying out the given expression with $\widetilde\bbM_i^{-1}=\bI + \tbC_i\fisher$ (see the similar calculation in the proof of \Cref{prop: deterministic state eigen basis}). 
    To obtain the expression for $\widetilde\bK_i$, recall $\widetilde\bK_i\bH = (\bI-\widetilde\bbM_i)$, and that $\widetilde\bw_\ell = \bGamma^{-1}\bH\tilde\bu_\ell$ for $\ell\leq r\leq h$ (from \Cref{prop: stoch state eigenvectors}), and that $\frac1{1+\tilde\delta_{\ell,i}} = \frac{\tilde\delta_{\ell,i+1}}{\tilde\delta_{\ell,i}}$. Next,
    \begin{align*}
        \widetilde\bbM_{ik} = \sum_{\ell=1}^r \prod_{j=k}^i \frac{\tilde\delta_{\ell,j+1}}{\tilde\delta_{\ell,j}}\tilde\bu_\ell\tilde\bu_\ell^\top\fisher + \widetilde\bbQ + \widetilde\bbN.
    \end{align*}
    Noting that $\prod_{j=k}^i \frac{\tilde\delta_{\ell,j+1}}{\tilde\delta_{\ell,j}} = \frac{\tilde\delta_{\ell,i+1}}{\tilde\delta_{\ell,k}}$ yields the result.
\end{proof}

We can now prove our main result. 
\begin{theorem}\label{thm: stoch state convergence}
    For all particles $j=1,2,\ldots,J$, the following hold:
    \begin{enumerate}[label=(\alph*)]
        \item as $i\to\infty$, $\E[\|\widetilde\bbP\tilde\bomega_i^{(j)}\|]=\mathcal{O}(i^{-\frac12})$,
        \item for all $i\geq 0$, $\widetilde\bbQ\tilde\bomega_i^{(j)} = \widetilde\bbQ\tilde\bomega_0^{(j)}$, and 
        \item for all $i\geq 0$, $\widetilde\bbN\tilde\bomega_i^{(j)} = \widetilde\bbN\tilde\bomega_0^{(j)}$.
    \end{enumerate}
\end{theorem}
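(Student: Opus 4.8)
The plan is to transcribe the proof of \Cref{thm: stochastic observation space convergence} from the observation space to the state space, replacing the idealized misfit iteration by the idealized residual iteration~\eqref{eq: stoch simplified LS iteration} (unrolled as~\eqref{eq: stoch simplified LS from zero}) and the observation-space projectors by $\widetilde\bbP,\widetilde\bbQ,\widetilde\bbN$. The two analytic inputs are \Cref{lem: stoch state M K expressions}, which expresses $\widetilde\bbM_{ik}$ and $\widetilde\bK_i$ in the fixed eigenbasis $\{\tilde\bu_\ell\}$, and \Cref{cor: stochastic eigenvalue rates}, which gives $\tilde\delta_{\ell,i}=(i+c_\ell)^{-1}$ with $c_\ell=1/\tilde\delta_{\ell,0}>0$ for $1\le\ell\le r$. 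Throughout one uses the $(\fisher)$-orthonormality $\tilde\bU^\top(\fisher)\tilde\bU=\bI_h$ together with $\tilde\bw_\ell^\top\bSigma\tilde\bw_m=\delta_{\ell m}$.

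For parts (b) and (c): \Cref{lem: stoch state M K expressions} gives $\Ran(\widetilde\bK_k)\subseteq\textsf{span}(\tilde\bu_1,\dots,\tilde\bu_r)=\Ran(\widetilde\bbP)$, whence $\widetilde\bbQ\widetilde\bK_k=\widetilde\bbN\widetilde\bK_k=\mathbf{0}$ for every $k$; the same lemma shows $\widetilde\bbM_{ik}$ restricts to the identity on $\Ran(\widetilde\bbQ)$ and on $\Ran(\widetilde\bbN)$, and $\widetilde\bbQ,\widetilde\bbN$ commute with each $\widetilde\bbM_j$ (\Cref{prop: stoch state spec proj}), hence with every product $\widetilde\bbM_{i,k+1}$. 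Applying $\widetilde\bbQ$ (resp.\ $\widetilde\bbN$) to~\eqref{eq: stoch simplified LS from zero} therefore annihilates all noise terms and leaves $\widetilde\bbQ\tilde\bomega_{i+1}^{(j)}=\widetilde\bbM_{i0}\widetilde\bbQ\tilde\bomega_0^{(j)}=\widetilde\bbQ\tilde\bomega_0^{(j)}$, and similarly with $\widetilde\bbN$.

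For part (a): apply $\widetilde\bbP$ to~\eqref{eq: stoch simplified LS from zero} taken at index $i-1$ (so $i\ge1$). By \Cref{lem: stoch state M K expressions} each product $\widetilde\bbM_{i-1,k+1}\widetilde\bK_k$ collapses, via $\tilde\bu_\ell^\top(\fisher)\tilde\bu_m=\delta_{\ell m}$, to $\sum_{\ell=1}^r\tilde\delta_{\ell,i}\tilde\bu_\ell\tilde\bw_\ell^\top$ independently of $k$, and $\widetilde\bbP$ fixes $\Ran(\widetilde\bK_{i-1})$, so all noise contributions merge into one term, giving the state-space analogue of the observation-space identity,
\begin{align*}
    \widetilde\bbP\tilde\bomega_i^{(j)} = \sum_{\ell=1}^r\frac{\tilde\delta_{\ell,i}}{\tilde\delta_{\ell,0}}\,\tilde\bu_\ell\tilde\bu_\ell^\top(\fisher)\bigl(\widetilde\bbP\tilde\bomega_0^{(j)}\bigr) + \sum_{\ell=1}^r\tilde\delta_{\ell,i}\,\tilde\bu_\ell\tilde\bw_\ell^\top\Bigl(\sum_{k=0}^{i-1}\beps_k^{(j)}\Bigr),
\end{align*}
where the appearance of $\widetilde\bbP\tilde\bomega_0^{(j)}$ in the first term again uses $(\fisher)$-orthonormality. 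Since the $\beps_k^{(j)}$ are i.i.d.\ $\cN(\mathbf{0},\bSigma)$ and $\tilde\bw_\ell^\top\bSigma\tilde\bw_m=\delta_{\ell m}$, substituting $\tilde\delta_{\ell,i}=(i+c_\ell)^{-1}$ yields $\E[\widetilde\bbP\tilde\bomega_i^{(j)}]=\sum_{\ell=1}^r\frac{c_\ell}{i+c_\ell}\tilde\bu_\ell\tilde\bu_\ell^\top(\fisher)(\widetilde\bbP\tilde\bomega_0^{(j)})$ and $\cov(\widetilde\bbP\tilde\bomega_i^{(j)})=\sum_{\ell=1}^r\frac{i}{(i+c_\ell)^2}\tilde\bu_\ell\tilde\bu_\ell^\top$.

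Finally, each $c_\ell>0$ and each $\tilde\bu_\ell$ is a fixed (iteration-independent) vector by \Cref{prop: stoch state eigenvectors}, so $\|\E[\widetilde\bbP\tilde\bomega_i^{(j)}]\|^2=\mathcal{O}(i^{-2})$ and $\mathsf{trace}(\cov(\widetilde\bbP\tilde\bomega_i^{(j)}))=\sum_{\ell=1}^r\frac{i}{(i+c_\ell)^2}\|\tilde\bu_\ell\|^2\le\frac1i\sum_{\ell=1}^r\|\tilde\bu_\ell\|^2=\mathcal{O}(i^{-1})$. Hence $\E[\|\widetilde\bbP\tilde\bomega_i^{(j)}\|^2]=\|\E[\widetilde\bbP\tilde\bomega_i^{(j)}]\|^2+\mathsf{trace}(\cov(\widetilde\bbP\tilde\bomega_i^{(j)}))=\mathcal{O}(i^{-1})$, and Jensen's inequality gives $\E[\|\widetilde\bbP\tilde\bomega_i^{(j)}\|]=\mathcal{O}(i^{-1/2})$, proving (a). The main (and essentially only) obstacle relative to the observation-space proof is that $\tilde\bU$ is orthonormalized in the $(\fisher)$-seminorm rather than the Euclidean norm, so the covariance trace carries factors $\|\tilde\bu_\ell\|^2$ in place of the clean estimate $\mathsf{trace}(\bSigma\tilde\bw_\ell\tilde\bw_\ell^\top\bSigma)\le\|\bSigma\|$ available there; since these are fixed problem-dependent constants the $1/\sqrt{i}$ rate is unaffected, but they must be tracked carefully. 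As in the observation-space case, the identity for $\widetilde\bbP\tilde\bomega_i^{(j)}$ in fact yields mean-square convergence, from which almost-sure convergence and a rate arbitrarily close to $i^{-1/2}$ in probability follow by the same arguments used there.
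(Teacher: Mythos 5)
Your proof is correct and follows essentially the same route as the paper's: unroll the idealized residual iteration \eqref{eq: stoch simplified LS from zero}, use \Cref{lem: stoch state M K expressions} and \Cref{cor: stochastic eigenvalue rates} to see that the projectors $\widetilde\bbQ,\widetilde\bbN$ annihilate the noise terms and freeze those components, and bound the mean and covariance of $\widetilde\bbP\tilde\bomega_i^{(j)}$ before applying Jensen's inequality. The only cosmetic difference is in the constant for part (a): you keep $\sum_{\ell=1}^r\|\tilde\bu_\ell\|^2$ and track the $\mathcal{O}(i^{-2})$ mean-squared term explicitly, whereas the paper bounds $\|\tilde\bu_\ell\|^2\le\|(\fisher)^{\dagger}\|$ to obtain the constant $r\|(\fisher)^{\dagger}\|$; the $\mathcal{O}(i^{-1/2})$ rate is identical.
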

\begin{proof}
    The argument for statements (b) and (c) is essentially the same as the argument for statements (b) and (c) of \Cref{thm: stochastic observation space convergence}. For (a): we apply $\bbP$ to $\tilde\bomega_i^{(j)}$ \eqref{eq: stoch simplified LS from zero} and substitute our expressions from \Cref{lem: stoch state M K expressions} to obtain:
    \begin{align*}
        \widetilde\bbP\tilde\bomega_{i}^{(j)} &= \sum_{\ell=1}^r \frac{\tilde\delta_{\ell,i}}{\tilde\delta_{\ell,0}}\tilde\bu_\ell\tilde\bu_\ell\fisher \tilde\bomega_0^{(j)} + \sum_{k=0}^{i-1}\sum_{\ell=1}^r\tilde\delta_{\ell,i}\tilde\bu_\ell\tilde\bw_\ell^\top\beps_k^{(j)}\\
        &= \sum_{\ell=1}^r \frac{c_\ell}{i+c_\ell}\tilde\bu_\ell\tilde\bu_\ell^\top\fisher\tilde\bomega_0^{(j)} + \sum_{\ell=1}^r\tilde\bu_\ell\tilde\bw_\ell^\top \frac1{i+c_\ell}\sum_{k=0}^{i-1}\beps_k^{(j)},
    \end{align*}
    where $c_\ell = \frac1{\tilde\delta_{\ell,0}}$ as before and we have used \Cref{cor: stochastic eigenvalue rates} to get the second line. 
    We follow a similar argument as for the bound on $\|\widetilde\bcP\tilde\btheta_i^{(j)}\|$ in the proof of \Cref{thm: stochastic observation space convergence}, using the fact that  $\mathsf{trace}(\tilde\bu_\ell\tilde\bu_\ell^\top)=\tilde\bu_\ell^\top(\bH^\top\bGamma^{-1}\bH)^{\frac12}(\bH^\top\bGamma^{-1}\bH)^{\dagger}(\bH^\top\bGamma^{-1}\bH)^{\frac12}\tilde\bu_\ell\leq \|(\bH^\top\bGamma^{-1}\bH)^{\dagger}\|$ and $\tilde\bw_\ell^\top\bGamma\tilde\bw_\ell=1$, 
 to obtain  
$$    
\E[\|\widetilde\bbP\tilde\bomega_i^{(j)}\|]^2 \leq \E\left[\|\widetilde\bbP\tilde\bomega_i^{(j)}\|^2\right]\leq 
\|(\bH^\top\bGamma^{-1}\bH)^{\dagger}\|\sum_{\ell=1}^r \frac{i}{(i+c_\ell)^2}\leq \|(\bH^\top\bGamma^{-1}\bH)^{\dagger}\|\left(\frac{r}{i}\right),
$$
 which is $\mathcal{O}(i^{-1})$, as $i\rightarrow \infty$. This gives the conclusion of (a). 
\end{proof}

\begin{corollary}
    Under the idealized particle iteration~\eqref{eq: idealized particle iteration stochastic},  
    \begin{align*}
\lim_{i\to\infty}\tilde\bv_i^{(j)} = \widetilde\bbP\bH^+\by + \widetilde\bbQ\tilde\bv_0^{(j)} + \widetilde\bbN\tilde\bv_0^{(j)}, \quad \mbox{ for each } j=1,2,\ldots,J.
    \end{align*}
\end{corollary}

\section{Numerical illustration}\label{sec: numerics}
We construct an illustrative example with $n=500$ observations of $d=1000$ states, with randomly generated $\bH,\bGamma,\by$, and $\bv_0^{(1:J)}$ carefully constructed such that all fundamental subspaces are non-trivial\footnote{see \Cref{rem: trivial obs space,rem: trivial state space}}, as follows: $\bGamma\in\R^{n\times n}$ is a random full-rank symmetric positive definite matrix, and $\bH\in\R^{n\times d}$ is constructed so that $\Ker(\bH)$ and $\Ker(\bH^\top)$ are both non-trivial. The observations $\by$ are generated by applying $\bH$ to a random vector in $\R^d$ and adding noise drawn from the normal distribution with mean zero and covariance $\bGamma$. The ensemble is initialized with $J$ random particles that typically have nonzero components in both $\Ran(\bH^\top)$ and $\Ker(\bH)$, but crucially, do \textit{not} contain  $\Ran(\bH^\top)$ within their span. For both deterministic and stochastic EKI, we report results using both a large ensemble with $J = 5000$ particles as well as a small ensemble with $J = 10$ particles. Our code for this example is available at \texttt{https://github.com/elizqian/eki-fundamental-subspaces}.

\begin{figure}
    \centering
    \includegraphics[width=0.9\linewidth]{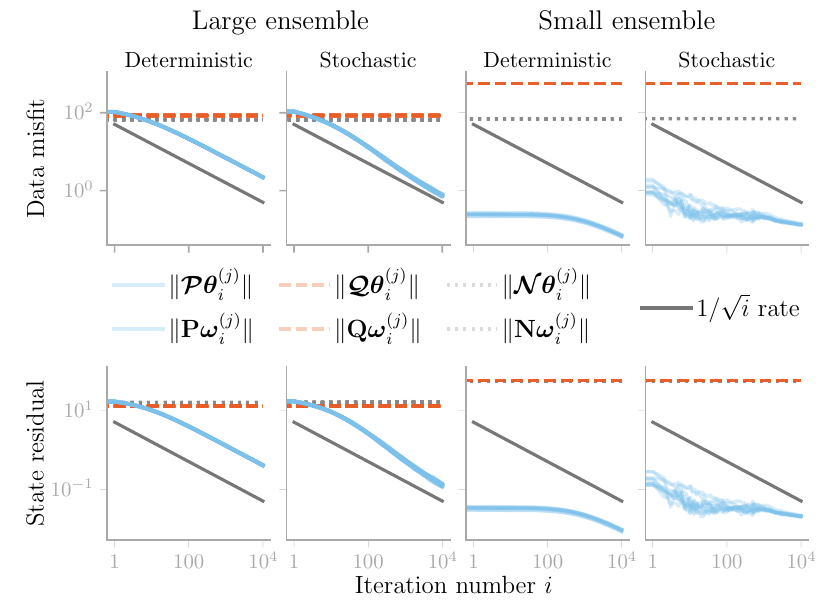}
    \caption{Evolution of particle misfit/residual components under deterministic and stochastic variants of EKI for a randomly generated problem with dimensions $n = 500$ and $d = 1000$. The large ensemble uses $J = 5000$ particles; the small ensemble uses $J = 10$.}
    \label{fig: rate illustration}
\end{figure}

\Cref{fig: rate illustration} illustrates the convergence behavior of both the observation space misfit and the state space residual of the EKI particles under both the deterministic and stochastic EKI dynamics. For both deterministic and stochastic EKI, the projectors are computed from the eigenvalue problems defined by the initial ensemble. The magnitude of each of the components is then plotted vs.\ iteration index $i$ for $i$ up to $i_{\rm max} = 10^4$. This choice of $i_{\rm max}$ allows us to show that the asymptotic behavior of each of the EKI instances reflects our analytical results, but in practice `early stopping' of EKI, e.g., based on the discrepancy principle, may be utilized to avoid overfitting to the data~\cite{iglesias2013ensemble,schillings2017analysis}.

In \Cref{fig: rate illustration}, solid blue lines show the misfit/residual components projected by $\bcP$/$\bbP$ onto observable and populated space. For deterministic EKI, the ensemble size has no effect on the asymptotic convergence rate of this component, but ensemble size does affect \textit{pre}-asymptotic behavior: the small ensemble takes many more iterations to arrive at the asymptote than the large ensemble. For stochastic EKI with a large ensemble, we observe convergence close to the expected $1/\sqrt{i}$ rate in this space, but stochastic EKI with a small ensemble fails to converge. This is because with only $J=10$ particles the true iterations~\cref{eq: stochastic misfit iteration,eq: stoch residual iteration} are far from the their idealized counterparts~\cref{eq: simplified stochastic misfit iteration,eq: stoch simplified LS iteration}, whereas with $J = 5000$ particles the true iteration is closer to the idealized iteration we analyze. Dashed orange lines show the misfit/residual components projected by $\bcQ$/$\bbQ$ onto observable and unpopulated space: because there are no particles in this space, these components remain constant. Dotted gray lines show the misfit/residual components projected by $\bcN$/$\bbN$ onto the unobservable space, which also remain constant. We note that in the measurement space, the size of the unobservable $\bcN$-component of the misfit is independent of ensemble size because this component is exactly the component of the measured data $\by$ lying in $\Ker(\bH)$, whereas the size of the observable populated $\bcP$ [unpopulated $\bcQ$] component is larger [smaller] for the large ensemble, because more directions are populated. In state space, trends for observable populated $\bbP$ and observable unpopulated $\bbQ$ components are similar. However, in state space the unobservable $\bbN$ component is larger for the large ensemble because the large ensemble can span more directions of $\Ker(\bH)$. 

\section{Conclusions}\label{sec: conclusions}
The work presented here offers a new analysis of the behavior of both deterministic and stochastic versions of basic EKI for linear observation operators, interpreting EKI convergence properties in terms of ``fundamental subspaces” analogous to Strang's fundamental subspaces of linear algebra.  Our analysis directly examines the discrete EKI iteration and defines six fundamental subspaces across both observation and state spaces that are distinguished in terms of convergence behaviour. This approach confirms convergence rates previously derived elsewhere for continuous-time limits and yields new results that describe EKI behavior in terms of these fundamental subspaces.  Our analysis is the first to illuminate the relationship between EKI solutions and the standard minimum-norm weighted least squares solution: EKI particles converge to the standard solution only in the observable and populated fundamental subspace, while particle components in the unpopulated observable space and unobservable space remain at their initialized values.

We note that although the EKI iteration itself depends on the iteration index $i$, the fundamental subspaces of EKI that we describe are independent of the iteration, and convergence rates within these subspaces are governed by the magnitude of eigenvalues related to these invariant subspaces. Similar circumstances occur in classical iterative methods for solving linear systems (e.g., see the discussion of Richardson iteration  in \cite[Sect. 6.1.3] {bjorck2024numMeth4LSprob}), suggesting directions for further EKI methodological development and analysis that exploit these connections. For example, EKI variants based on convergence acceleration for classical iterative methods may be pursued as an alternative to convergence acceleration based on covariance inflation. 
Future work will investigate these ideas further.

\section*{Acknowledgments}
Work by EQ was supported in parts by the US Department of Energy Oﬃce of Science Energy Earthshot Initiative as part of the ‘Learning reduced models under extreme data conditions for design and rapid decision-making in complex systems’ project under award number DE-SC0024721, and by the Air Force Oﬃce of Scientiﬁc Research (AFOSR) award FA9550-24-1-0105 (Program Oﬃcer Dr. Fariba Fahroo). CB was supported in part by the National Science Foundation under DMS-2318880.

\appendix

\section{Additional results concerning stochastic EKI}

\begin{proposition}\label{prop: 9c upper bound}
        For $i\geq 0$, $\E[\bH\bC_i\bH^\top]\leq\tbB_i$, with respect to  L\"owner ordering. 
\end{proposition}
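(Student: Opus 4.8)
The plan is to prove \Cref{prop: 9c upper bound} by induction on $i$, built on the conditional covariance identity~\eqref{eq: stoch covariance conditional iteration} together with two structural properties of the matrix map
\[
f(\mathbf{X}) = \bSigma - \bSigma(\mathbf{X}+\bSigma)^{-1}\bSigma, \qquad \mathbf{X}\succeq\boldsymbol{0},
\]
namely that $f$ is operator monotone and operator concave on the cone of symmetric positive semidefinite matrices. Note at the outset that, by definition~\eqref{eq: stoch C definition}, $\bC_{i+1}=f(\bC_i)$, while~\eqref{eq: stoch covariance conditional iteration} together with the tower property of conditional expectation gives $\E[\bH\bGamma_{i+1}\bH^\top]=\E\!\left[f(\bH\bGamma_i\bH^\top)\right]$.

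To establish the two properties of $f$, I would observe that $\mathbf{X}\mapsto(\mathbf{X}+\bSigma)^{-1}$ is the composition of the affine shift $\mathbf{X}\mapsto\mathbf{X}+\bSigma$ with matrix inversion, which is both order-reversing and operator convex on positive definite matrices; hence $\mathbf{X}\mapsto(\mathbf{X}+\bSigma)^{-1}$ is operator antitone and operator convex for $\mathbf{X}\succeq\boldsymbol{0}$. Since congruence $\mathbf{A}\mapsto\bSigma\mathbf{A}\bSigma$ is a linear, order-preserving map, $\mathbf{X}\mapsto-\bSigma(\mathbf{X}+\bSigma)^{-1}\bSigma$ is simultaneously operator monotone and operator concave, and adding the constant $\bSigma$ preserves both. (Equivalently, for $\mathbf{X}\succ\boldsymbol{0}$ one has $f(\mathbf{X})=(\mathbf{X}^{-1}+\bSigma^{-1})^{-1}$, the parallel sum of $\mathbf{X}$ and $\bSigma$, which is classically operator monotone and operator concave in $\mathbf{X}$; the general case follows by continuity.)

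The induction then runs as follows. For the base case $i=0$, $\bC_0=\bH\bGamma_0\bH^\top$ is deterministic, so $\E[\bH\bGamma_0\bH^\top]=\bC_0$. For the inductive step, assume $\E[\bH\bGamma_i\bH^\top]\preceq\bC_i$. Operator concavity of $f$ together with the operator Jensen inequality yields $\E\!\left[f(\bH\bGamma_i\bH^\top)\right]\preceq f\!\left(\E[\bH\bGamma_i\bH^\top]\right)$, and operator monotonicity of $f$ applied to the inductive hypothesis gives $f\!\left(\E[\bH\bGamma_i\bH^\top]\right)\preceq f(\bC_i)=\bC_{i+1}$. Chaining these with $\E[\bH\bGamma_{i+1}\bH^\top]=\E\!\left[f(\bH\bGamma_i\bH^\top)\right]$ completes the induction.

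The concavity and monotonicity of $f$ are straightforward once the inversion/parallel-sum structure is recognized, so the step that really needs care is the probabilistic one: applying operator Jensen requires (i) integrability of $\bH\bGamma_i\bH^\top$, which follows by a routine induction using that $\bcalM_i$ and $\bI-\bcalM_i$ have operator norm at most one and that the particle statistics are polynomial functions of the Gaussian perturbations, and (ii) the operator Jensen inequality $\E[f(\mathbf{X})]\preceq f(\E[\mathbf{X}])$ for an operator concave $f$ and a random positive semidefinite matrix $\mathbf{X}$ that need not be finitely supported, which is standard but deserves an explicit citation. As a minor point I would also re-derive~\eqref{eq: stoch covariance conditional iteration} to confirm it: the cross-covariance terms in the full iteration~\eqref{eq: full stoch obs covariance iteration} vanish in conditional expectation because the $\beps_i^{(j)}$ are i.i.d., mean zero, and independent of the iteration history, and the empirical covariance of the $\beps_i^{(j)}$ is unbiased for $\bSigma$.
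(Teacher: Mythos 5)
Your proposal is correct and follows essentially the same route as the paper: induction on $i$, with the inductive step combining an operator Jensen inequality with the Löwner monotonicity of $\mathbf{X}\mapsto\bSigma-\bSigma(\mathbf{X}+\bSigma)^{-1}\bSigma$. The only cosmetic difference is that the paper applies Jensen to the operator-convex map $\mathbf{X}\mapsto(\mathbf{X}+\bSigma)^{-1}$ and then conjugates by $\bSigma$, whereas you package the same fact as operator concavity of $f$ itself (via the parallel-sum identity); these are equivalent.
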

\begin{proof}
We proceed by induction. The statement is trivially true at $i = 0$ since $\E[\bH\bC_0\bH^\top]=\bH\bC_0\bH^\top= \tbB_0$. 
Now take the expectation of~\eqref{eq: stoch covariance conditional iteration} over all stochastic perturbations and use the Law of Iterated Expectation to obtain
\begin{align}\label{eq: 9c starting point}
\E[\bH\bC_{i+1}\bH^\top]=&\E\left[\E[\bH\bC_{i+1}\bH^\top|\beps_{0:i-1}^{(1:J)}]\right] =\mathbb{E}[\bGamma - \bGamma(\bH\bC_i\bH^\top + \bGamma)^{-1}\bGamma]\\
    =&\bGamma -\bGamma\,\mathbb{E}[(\bH\bC_i\bH^\top + \bGamma)^{-1}] \bGamma
    \leq \bGamma -\bGamma(\mathbb{E}[\bH\bC_i\bH^\top] + \bGamma)^{-1} \bGamma,\nonumber
\end{align}
where the inequality comes from the convexity of the inverse within the family of positive definite matrices.
The inductive hypothesis together with~\eqref{eq: 9c starting point} implies that
\begin{align*}
\E[\bH\bC_{i+1}\bH^\top]\leq
 \bGamma -\bGamma(\tbB_i + \bGamma)^{-1} \bGamma = \tbB_{i+1},
\end{align*}
where the inequality follows because the function $f(\mathbf{X}) = \bGamma-\bGamma(\mathbf{X}+\bGamma)^{-1}\bGamma$ is non-decreasing with respect to $\mathbf{X}$ in the L\"owner ordering.
\end{proof}

\begin{proposition}
 \label{stochasticPconv} $\|\widetilde\bcP\tilde\btheta_i^{(j)}\|\rightarrow 0$  \textit{in probability} at a rate that can be made arbitrarily close to $1/\sqrt{i}$. That is, $\displaystyle \lim_{i\rightarrow \infty}\mathsf{Prob}\left(\|\widetilde\bcP\tilde\btheta_i^{(j)}\|\geq \frac{\varepsilon}{i^p}\right)=0$ for all $0<p<\frac12$ and all $\varepsilon>0$.  In the $i\to\infty$ limit, $\widetilde\bcP\tilde\btheta_i^{(j)}$ converges to zero 
\textit{almost surely}.
\end{proposition}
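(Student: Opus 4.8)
The plan is to work directly from the closed-form expression for $\widetilde\bcP\tilde\btheta_i^{(j)}$ already derived in the proof of \Cref{thm: stochastic observation space convergence}. Writing $c_\ell = 1/\tilde\delta_{\ell,0}$ and $\bS_i = \sum_{k=0}^{i-1}\beps_k^{(j)}$ for the running sum of the stochastic perturbations applied to particle $j$, that proof gives, for $i\geq 1$,
\begin{align*}
    \widetilde\bcP\tilde\btheta_{i}^{(j)} = \sum_{\ell=1}^r \frac{c_\ell}{i+c_\ell}\,\bSigma\tilde\bw_\ell\tilde\bw_\ell^\top\bigl(\widetilde\bcP\tilde\btheta_0^{(j)}\bigr) + \sum_{\ell=1}^r \frac{1}{i+c_\ell}\,\bSigma\tilde\bw_\ell\tilde\bw_\ell^\top\bS_i,
\end{align*}
where the first sum is deterministic and the second is driven entirely by $\bS_i$. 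Since by \Cref{alg:EKI} the perturbations $\{\beps_k^{(j)}\}_{k\geq 0}$ are i.i.d.\ $\cN(0,\bSigma)$, we have $\bS_i\sim\cN(0,i\bSigma)$ exactly. Bounding operator norms, $\|\widetilde\bcP\tilde\btheta_i^{(j)}\|\leq C_1/i + (C_2/i)\|\bS_i\|$ for constants $C_1,C_2$ depending on $j$ but not on $i$, so both claims reduce to control of the random walk $\bS_i$.

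For the in-probability statement I would use Chebyshev's inequality together with the second-moment bound $\E[\|\widetilde\bcP\tilde\btheta_i^{(j)}\|^2]=\mathcal{O}(i^{-1})$ established in the proof of \Cref{thm: stochastic observation space convergence}: for any $\varepsilon>0$ and $0<p<\tfrac12$,
\begin{align*}
    \mathsf{Prob}\Bigl(\|\widetilde\bcP\tilde\btheta_i^{(j)}\|\geq \varepsilon\,i^{-p}\Bigr) \leq \frac{i^{2p}}{\varepsilon^2}\,\E\bigl[\|\widetilde\bcP\tilde\btheta_i^{(j)}\|^2\bigr] \leq \frac{C}{\varepsilon^2}\,i^{2p-1},
\end{align*}
which tends to $0$ as $i\to\infty$ since $2p-1<0$. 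This handles the first claim. The rate cannot be pushed to $p=\tfrac12$: since $\bS_i/\sqrt{i}\sim\cN(0,\bSigma)$ for every $i$, the rescaled quantity $\sqrt{i}\,\widetilde\bcP\tilde\btheta_i^{(j)}$ has, in the $i\to\infty$ limit, the non-degenerate Gaussian law of $\bSigma\bigl(\sum_{\ell=1}^r\tilde\bw_\ell\tilde\bw_\ell^\top\bigr)\bS_i/\sqrt{i}$, so it does not converge to $\mathbf{0}$ in probability — which explains why the rate can only be made arbitrarily close to $1/\sqrt{i}$.

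For almost-sure convergence I would apply the (vector) strong law of large numbers to the i.i.d.\ sequence $\{\beps_k^{(j)}\}_{k\geq 0}$, coordinate by coordinate, to get $\bS_i/i\to\mathbf{0}$ almost surely in $\R^n$; combined with $\|\widetilde\bcP\tilde\btheta_i^{(j)}\|\leq C_1/i + C_2\|\bS_i\|/i$ this yields $\widetilde\bcP\tilde\btheta_i^{(j)}\to\mathbf{0}$ a.s.

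The two norm bounds and the Chebyshev estimate are routine. The only step that needs a little care — and the one I would flag as the main obstacle — is that the almost-sure conclusion is a statement on a single probability space about the whole trajectory $\{\bS_i\}_{i\geq 1}$, so it genuinely requires mutual independence of the full collection $\{\beps_k^{(j)}\}_{k\geq 0}$ in order to invoke Kolmogorov's strong law, not merely the pairwise-uncorrelatedness that sufficed for the second-moment arguments above; this stronger independence does hold by construction in \Cref{alg:EKI}. Once that is noted, the remainder is bookkeeping.
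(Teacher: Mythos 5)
Your proposal is correct and follows essentially the same route as the paper: Markov/Chebyshev applied to the second-moment bound \eqref{Pbnd} for the in-probability rate, and the strong law of large numbers applied to the i.i.d.\ perturbation average for the almost-sure claim. Your added observation that $\sqrt{i}\,\widetilde\bcP\tilde\btheta_i^{(j)}$ has a non-degenerate Gaussian limit, explaining why $p=\tfrac12$ is unattainable, is a correct refinement not stated in the paper but changes nothing about the argument.
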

\begin{proof}
Note first that for any $\varepsilon>0$, the event $\|\widetilde\bcP\tilde\btheta_i^{(j)}\|\geq \frac{\varepsilon}{i^p}$ is identical to the event $\|\widetilde\bcP\tilde\btheta_i^{(j)}\|^{2}\geq \frac{\varepsilon^{2}}{i^{2p}}$ and so by the Markov inequality and \eqref{Pbnd},
{\small 
$$
\mathsf{Prob}\left(\|\widetilde\bcP\tilde\btheta_i^{(j)}\|\geq \frac{\varepsilon}{i^p}\right)=\mathsf{Prob}\left(\|\widetilde\bcP\tilde\btheta_i^{(j)}\|^{2}\geq \frac{\varepsilon^{2}}{i^{2p}}\right)\leq \frac{i^{2p}}{\varepsilon^{2}}
\E\left[\|\widetilde\bcP\tilde\btheta_i^{(j)}\|^2\right] \leq 
\|\bGamma\|\left(\frac{r}{\varepsilon^{2}}\right)\cdot i^{2p-1}.
$$ }
So, $\displaystyle \lim_{i\rightarrow \infty}\mathsf{Prob}\left(\|\widetilde\bcP\tilde\btheta_i^{(j)}\|\geq \frac{\varepsilon}{i^p}\right)=0$ for all $0<p<\frac12$, establishing the first assertion. 
For the second assertion, rewrite $\widetilde\bcP\tilde\btheta_{i}^{(j)}$ as
$$
\widetilde\bcP\tilde\btheta_{i}^{(j)} = 
 \frac1i\sum_{\ell=1}^r \frac{i \, c_\ell}{i+c_\ell}\bGamma\tilde\bw_\ell\tilde\bw_\ell^\top\!\left(\widetilde\bcP\tilde\btheta_0^{(j)}\right) +  \sum_{\ell=1}^r 
\frac{i}{i+c_\ell} \bGamma\tilde\bw_\ell\tilde\bw_\ell^\top\!\left(\frac1i\sum_{k=0}^{i-1}\beps_k^{(j)}\right).
$$
The first term is non-stochastic and converges to $\mathbf{0}$ as $i\rightarrow \infty$. The second term involves the average of a sequence of \emph{i.i.d.}~random vectors: $\mathbf{m}_i^{(j)}=\frac1i\sum_{k=0}^{i-1}\beps_k^{(j)}$, which by \emph{the strong law of large numbers} converges almost surely to $\mathbf{0}$, the common mean of $\beps_k^{(j)}$: $\mathbf{m}_i^{(j)}\stackrel{a.s.}{\rightarrow}\mathbf{0}$. Hence, $\widetilde\bcP\tilde\btheta_{i}^{(j)}$ also converges almost surely to $\mathbf{0}$ as $i\rightarrow \infty$. 
\end{proof}

\begin{proposition}
     \label{HGamH conv}
$\mathsf{Prob}(\bH\bC_i\bH^\top\leq \varepsilon \,\bGamma)\geq 1-\frac{n}{\varepsilon}\frac1i
$ for $\varepsilon>0$. As a consequence,
$\displaystyle \lim_{i\rightarrow\infty}\mathsf{Prob}\left(\bH\bC_i\bH^\top\leq \frac1{i^p} \bGamma\right)=1$ for all $0<p<1$, so  $\bH\bC_i\bH^\top\rightarrow \mathbf{0}$ in probability at a rate that can be made arbitrarily close to $\frac1i$. Likewise, $\bcalM_i\rightarrow \bI$ in probability as $i\rightarrow \infty$.
\end{proposition}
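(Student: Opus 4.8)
The plan is to combine the Löwner bound $\E[\bH\bGamma_i\bH^\top]\preceq\bC_i$ of \Cref{prop: 9c upper bound} with an explicit decay bound on $\bC_i$, and then convert the resulting statement about the \emph{expectation} into a high-probability statement about the random matrix $\bH\bGamma_i\bH^\top$ itself via a trace and Markov's inequality. First I would show that $\bC_i\preceq\frac1i\bSigma$ for $i\geq1$. This is immediate from \Cref{cor: stochastic eigenvalue rates}: the pencil $(\bC_i,\bSigma)$ has $r$ positive generalized eigenvalues $\tilde\delta_{\ell,i}=(\tilde\delta_{\ell,0}^{-1}+i)^{-1}<\frac1i$ and all remaining generalized eigenvalues equal to zero (\Cref{prop: stochastic observation basis}), and since $\bSigma\succ0$, a uniform bound on the generalized eigenvalues of $(\bC_i,\bSigma)$ is equivalent to the Löwner bound $\bC_i\preceq\frac1i\bSigma$. (Alternatively, one can prove this directly by induction on the recurrence \eqref{eq: stoch C definition}: if $\bC_i\preceq\frac1i\bSigma$, then $\bC_i+\bSigma\preceq\frac{i+1}{i}\bSigma$, so $\bSigma(\bC_i+\bSigma)^{-1}\bSigma\succeq\frac{i}{i+1}\bSigma$, whence $\bC_{i+1}=\bSigma-\bSigma(\bC_i+\bSigma)^{-1}\bSigma\preceq\frac1{i+1}\bSigma$.) Combining with \Cref{prop: 9c upper bound} yields $\E[\bH\bGamma_i\bH^\top]\preceq\frac1i\bSigma$.

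Next I would whiten the coordinates: set $\bA_i=\bSigma^{-1/2}\bH\bGamma_i\bH^\top\bSigma^{-1/2}$, a random symmetric positive semidefinite matrix, and observe that the event $\{\bH\bGamma_i\bH^\top\preceq\varepsilon\bSigma\}$ is exactly the event $\{\lambda_{\max}(\bA_i)\leq\varepsilon\}$. Since $\bA_i\succeq\mathbf{0}$, we have $\lambda_{\max}(\bA_i)\leq\mathsf{trace}(\bA_i)$, so by Markov's inequality $\mathsf{Prob}(\lambda_{\max}(\bA_i)>\varepsilon)\leq\E[\mathsf{trace}(\bA_i)]/\varepsilon$, while $\E[\mathsf{trace}(\bA_i)]=\mathsf{trace}\big(\bSigma^{-1/2}\E[\bH\bGamma_i\bH^\top]\bSigma^{-1/2}\big)\leq\mathsf{trace}(\tfrac1i\bI)=\tfrac{n}{i}$ by the previous step. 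Taking complements gives $\mathsf{Prob}(\bH\bGamma_i\bH^\top\preceq\varepsilon\bSigma)\geq1-\frac{n}{\varepsilon i}$, the claimed bound.

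Finally, for the consequences: choosing $\varepsilon=i^{-p}$ with $0<p<1$ gives $\mathsf{Prob}(\bH\bGamma_i\bH^\top\preceq i^{-p}\bSigma)\geq1-n\,i^{p-1}\to1$, and since $\bH\bGamma_i\bH^\top\preceq\varepsilon\bSigma$ implies $\|\bH\bGamma_i\bH^\top\|\leq\varepsilon\|\bSigma\|$, this is precisely the in-probability convergence of $\bH\bGamma_i\bH^\top$ to $\mathbf{0}$ at a rate arbitrarily close to $1/i$. For $\bcalM_i\to\bI$, I would use the identity $\bcalM_i-\bI=-\bH\bGamma_i\bH^\top(\bH\bGamma_i\bH^\top+\bSigma)^{-1}$ together with the bound $\|(\bH\bGamma_i\bH^\top+\bSigma)^{-1}\|\leq\|\bSigma^{-1}\|$ (which follows from $\bH\bGamma_i\bH^\top+\bSigma\succeq\bSigma$) to get $\|\bcalM_i-\bI\|\leq\|\bSigma^{-1}\|\,\|\bH\bGamma_i\bH^\top\|\to0$ in probability. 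The only real subtlety — and the step I expect to matter most — is the passage from the deterministic Löwner bound on $\E[\bH\bGamma_i\bH^\top]$ to a high-probability Löwner bound on the random matrix $\bH\bGamma_i\bH^\top$; the trace/Markov device handles this cleanly, at the (unavoidable with this argument) cost of the dimensional factor $n$ appearing in the stated probability bound.
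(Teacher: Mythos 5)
Your proposal is correct and follows essentially the same route as the paper: whiten by $\bSigma^{-1/2}$, bound $\mathsf{trace}\big(\E[\bSigma^{-1/2}\bH\bGamma_i\bH^\top\bSigma^{-1/2}]\big)\leq n/i$ via \Cref{prop: 9c upper bound} and the eigenvalue formula of \Cref{cor: stochastic eigenvalue rates}, and convert this to a high-probability L\"owner bound by a Markov-type inequality. The only (harmless) difference is that you derive the matrix Markov step from scratch using $\lambda_{\max}\leq\mathsf{trace}$ for positive semidefinite matrices, where the paper cites it, and you give an explicit norm bound $\|\bcalM_i-\bI\|\leq\|\bSigma^{-1}\|\,\|\bH\bGamma_i\bH^\top\|$ where the paper invokes continuity.
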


\begin{proof}
A matrix version of the Markov inequality (see e.g., \cite[Theorem 12]{ahlswede2002strong}) can be stated as: if $\mathbf{X}$ is a random symmetric matrix with finite expectation that is positive semidefinite almost surely (meaning  $\mathsf{Prob}(\mathbf{X}\geq \mathbf{0})=1$), then for any $\varepsilon>0$, {\small $\mathsf{Prob}(\mathbf{X}\leq \varepsilon\bI)\geq 1-\frac1{\varepsilon}\mathsf{trace}(\mathbb{E}[\mathbf{X}])$}.  Choosing $\mathbf{X}=\bGamma^{-\frac12}\bH\bC_i\bH^\top\bGamma^{-\frac12}$, we have $\mathbf{X}\leq \varepsilon\bI$ if and only if {\small $\bH\bC_i\bH^\top\leq \varepsilon\bGamma$}, so 
$
\mathsf{Prob}(\bH\bC_i\bH^\top\leq \varepsilon \bGamma)=\mathsf{Prob}(\mathbf{X}\leq \varepsilon\bI)$.
Combining \Cref{prop: 9c upper bound} with equation \eqref{eq: GEV upper bound} and \Cref{cor: stochastic eigenvalue rates} gives 
$$
\mathsf{trace}(\mathbb{E}[\bGamma^{-\frac12}\bH\bC_i\bH^\top\bGamma^{-\frac12}])\leq \mathsf{trace}(\bGamma^{-\frac12}\tbB_i\bGamma^{-\frac12})=\sum_{\ell=1}^n \tilde\delta_{\ell,i}=\sum_{\ell=1}^n\frac{1}{\tilde\delta_{\ell,0}+i}\leq \frac{n}{i},
$$
which leads to the first bound.  The second assertion follows from choosing $\varepsilon=\frac1{i^p}$ and taking the limit $i\rightarrow \infty$. 
This implies, in particular, convergence of $\bH\bC_i\bH^\top\rightarrow \mathbf{0}$ in probability at a rate of $\frac1{i^p}$. Observe that  $\bcalM_i=\bGamma(\bH\bC_i\bH^\top+\bGamma)^{-1}=\bI-\bH\bC_i\bH^\top(\bH\bC_i\bH^\top+\bGamma)^{-1}$ and so, $\bcalM_i$ is a continuous function of $\bH\bC_i\bH^\top$. Since  $\bH\bC_i\bH^\top\rightarrow \mathbf{0}$ in probability, we have that 
$\bcalM_i\rightarrow \bI$ in probability as $i\rightarrow \infty$ as well. 
\end{proof}

\bibliographystyle{plain}
\bibliography{refs}

\begin{thebibliography}{10}

\bibitem{ahlswede2002strong}
Rudolf Ahlswede and Andreas Winter.
\newblock Strong converse for identification via quantum channels.
\newblock {\em IEEE Transactions on Information Theory}, 48(3):569--579, 2002.

\bibitem{albersEnsembleKalmanMethods2019}
David~J Albers, Paul-Adrien Blancquart, Matthew~E Levine, Elnaz
  Esmaeilzadeh~Seylabi, and Andrew Stuart.
\newblock Ensemble {{Kalman}} methods with constraints.
\newblock {\em Inverse Problems}, 35(9):095007, 2019.

\bibitem{armbrusterStabilizationContinuousLimit2022}
Dieter Armbruster, Michael Herty, and Giuseppe Visconti.
\newblock A {{Stabilization}} of a {{Continuous Limit}} of the {{Ensemble
  Kalman Inversion}}.
\newblock {\em SIAM Journal on Numerical Analysis}, 60(3):1494--1515, 2022.

\bibitem{basSwaveVelocityEstimation2022}
Elif~Ecem Bas, Elnaz Seylabi, Alan Yong, Hesam Tehrani, and Domniki Asimaki.
\newblock P- and {{S-wave}} velocity estimation by ensemble {{Kalman}}
  inversion of dispersion data for strong motion stations in {{California}}.
\newblock {\em Geophysical Journal International}, 231(1):536--551, 2022.

\bibitem{bergemann2010LocalizationTechniqueEnsemble}
Kay Bergemann and Sebastian Reich.
\newblock A localization technique for ensemble {{Kalman}} filters.
\newblock {\em Quarterly Journal of the Royal Meteorological Society},
  136(648):701--707, 2010.

\bibitem{bergemann2010MollifiedEnsembleKalman}
Kay Bergemann and Sebastian Reich.
\newblock A mollified {{Ensemble Kalman}} filter.
\newblock {\em Quarterly Journal of the Royal Meteorological Society},
  136(651):1636--1643, July 2010.

\bibitem{bjorck2024numMeth4LSprob}
{\AA}ke Bj{\"o}rck.
\newblock {\em Numerical methods for least squares problems}.
\newblock SIAM, 2024.

\bibitem{blomker2018strongly}
Dirk Bl{\"o}mker, Claudia Schillings, and Philipp Wacker.
\newblock A strongly convergent numerical scheme from ensemble {K}alman
  inversion.
\newblock {\em SIAM Journal on Numerical Analysis}, 56(4):2537--2562, 2018.

\bibitem{blomker2019well}
Dirk Bl{\"o}mker, Claudia Schillings, Philipp Wacker, and Simon Weissmann.
\newblock Well posedness and convergence analysis of the ensemble {K}alman
  inversion.
\newblock {\em Inverse Problems}, 35(8):085007, 2019.

\bibitem{blomker2022continuous}
Dirk Bl{\"o}mker, Claudia Schillings, Philipp Wacker, and Simon Weissmann.
\newblock Continuous time limit of the stochastic ensemble {K}alman inversion:
  Strong convergence analysis.
\newblock {\em SIAM Journal on Numerical Analysis}, 60(6):3181--3215, 2022.

\bibitem{bocquet2020BayesianInferenceChaotic}
Marc Bocquet, Julien Brajard, Alberto Carrassi, and Laurent Bertino.
\newblock Bayesian inference of chaotic dynamics by merging data assimilation,
  machine learning and expectation-maximization.
\newblock {\em Foundations of Data Science}, 2(1):55--80, 2020.

\bibitem{bungert2023complete}
Leon Bungert and Philipp Wacker.
\newblock Complete deterministic dynamics and spectral decomposition of the
  linear ensemble {K}alman inversion.
\newblock {\em SIAM/ASA Journal on Uncertainty Quantification}, 11(1):320--357,
  2023.

\bibitem{calvello2022ensemble}
Edoardo Calvello, Sebastian Reich, and Andrew~M Stuart.
\newblock Ensemble {K}alman methods: A mean field perspective.
\newblock {\em arXiv preprint arXiv:2209.11371}, 2024.

\bibitem{carrilloConsensusbasedOptimizationEnsemble2023}
Jos{\'e}~Antonio Carrillo, Claudia Totzeck, and Urbain Vaes.
\newblock {\em Consensus-Based {{Optimization}} and {{Ensemble Kalman
  Inversion}} for {{Global Optimization Problems}} with {{Constraints}}},
  volume~40, pages 195--230.
\newblock World Scientific, 2023.

\bibitem{chada2022convergence}
Neil Chada and Xin Tong.
\newblock Convergence acceleration of ensemble {K}alman inversion in nonlinear
  settings.
\newblock {\em Mathematics of Computation}, 91(335):1247--1280, 2022.

\bibitem{chadaAnalysisHierarchicalEnsemble2018}
Neil~K. Chada.
\newblock Analysis of {{Hierarchical Ensemble Kalman Inversion}}, 2018.

\bibitem{chadaIterativeEnsembleKalman2021}
Neil~K. Chada, Yuming Chen, and Daniel {Sanz-Alonso}.
\newblock Iterative ensemble {{Kalman}} methods: {{A}} unified perspective with
  some new variants.
\newblock {\em Foundations of Data Science}, 3(3):331, 2021.

\bibitem{chadaIncorporationBoxConstraintsEnsemble2019}
Neil~K. Chada, Claudia Schillings, and Simon Weissmann.
\newblock On the incorporation of box-constraints for ensemble {K}alman
  inversion, 2019.

\bibitem{chada2020tikhonov}
Neil~K. Chada, Andrew~M. Stuart, and Xin~T. Tong.
\newblock Tikhonov regularization within ensemble {K}alman inversion.
\newblock {\em SIAM Journal on Numerical Analysis}, 58(2):1263--1294, 2020.

\bibitem{chen2012EnsembleRandomizedMaximum}
Yan Chen and Dean~S. Oliver.
\newblock Ensemble {{Randomized Maximum Likelihood Method}} as an {{Iterative
  Ensemble Smoother}}.
\newblock {\em Mathematical Geosciences}, 44(1):1--26, January 2012.

\bibitem{chopin2020introduction}
Nicolas Chopin, Omiros Papaspiliopoulos, et~al.
\newblock {\em An introduction to sequential Monte Carlo}, volume~4.
\newblock Springer, 2020.

\bibitem{chung2010numerical}
Julianne Chung, James~G Nagy, and Ioannis Sechopoulos.
\newblock Numerical algorithms for polyenergetic digital breast tomosynthesis
  reconstruction.
\newblock {\em SIAM Journal on Imaging Sciences}, 3(1):133--152, 2010.

\bibitem{cleary2021CalibrateEmulateSample}
Emmet Cleary, Alfredo {Garbuno-Inigo}, Shiwei Lan, Tapio Schneider, and
  Andrew~M. Stuart.
\newblock Calibrate, emulate, sample.
\newblock {\em Journal of Computational Physics}, 424:109716, January 2021.

\bibitem{daum2010exact}
Fred Daum, Jim Huang, and Arjang Noushin.
\newblock Exact particle flow for nonlinear filters.
\newblock In {\em Signal processing, sensor fusion, and target recognition
  XIX}, volume 7697, pages 92--110. SPIE, 2010.

\bibitem{delmoral2006SequentialMonteCarlo}
Pierre Del~Moral, Arnaud Doucet, and Ajay Jasra.
\newblock Sequential {{Monte Carlo Samplers}}.
\newblock {\em Journal of the Royal Statistical Society Series B: Statistical
  Methodology}, 68(3):411--436, June 2006.

\bibitem{ding2021ensemble}
Zhiyan Ding and Qin Li.
\newblock Ensemble {K}alman inversion: mean-field limit and convergence
  analysis.
\newblock {\em Statistics and Computing}, 31:1--21, 2021.

\bibitem{dunbar2021CalibrationUncertaintyQuantification}
Oliver R.~A. Dunbar, Alfredo {Garbuno-Inigo}, Tapio Schneider, and Andrew~M.
  Stuart.
\newblock Calibration and {{Uncertainty Quantification}} of {{Convective
  Parameters}} in an {{Idealized GCM}}.
\newblock {\em Journal of Advances in Modeling Earth Systems},
  13(9):e2020MS002454, 2021.

\bibitem{emerick2013EnsembleSmootherMultiple}
Alexandre~A. Emerick and Albert~C. Reynolds.
\newblock Ensemble smoother with multiple data assimilation.
\newblock {\em Computers \& Geosciences}, 55:3--15, June 2013.

\bibitem{emerick2013InvestigationSamplingPerformance}
Alexandre~A. Emerick and Albert~C. Reynolds.
\newblock Investigation of the sampling performance of ensemble-based methods
  with a simple reservoir model.
\newblock {\em Computational Geosciences}, 17(2):325--350, April 2013.

\bibitem{epstein2007introduction}
Charles~L Epstein.
\newblock {\em Introduction to the mathematics of medical imaging}.
\newblock SIAM, 2007.

\bibitem{evensen2018AnalysisIterativeEnsemble}
Geir Evensen.
\newblock Analysis of iterative ensemble smoothers for solving inverse
  problems.
\newblock {\em Computational Geosciences}, 22(3):885--908, June 2018.

\bibitem{ghattas2024NonAsymptoticAnalysisEnsemble}
Omar~Al Ghattas and Daniel {Sanz-Alonso}.
\newblock Non-{{Asymptotic Analysis}} of {{Ensemble Kalman Updates}}:
  {{Effective Dimension}} and {{Localization}}.
\newblock {\em Information and Inference: A Journal of the IMA}, 13(1):iaad043,
  January 2024.

\bibitem{gottwald2021SupervisedLearningNoisy}
Georg~A. Gottwald and Sebastian Reich.
\newblock Supervised learning from noisy observations: {{Combining}}
  machine-learning techniques with data assimilation.
\newblock {\em Physica D: Nonlinear Phenomena}, 423:132911, September 2021.

\bibitem{gu2007IterativeEnsembleKalman}
Yaqing Gu and Dean~S. Oliver.
\newblock An {{Iterative Ensemble Kalman Filter}} for {{Multiphase Fluid Flow
  Data Assimilation}}.
\newblock {\em SPE Journal}, 12(04):438--446, December 2007.

\bibitem{guth14EnsembleKalman2022}
Philipp~A. Guth, Claudia Schillings, and Simon Weissmann.
\newblock 14 {{Ensemble Kalman}} filter for neural network-based one-shot
  inversion.
\newblock In {\em 14 {{Ensemble Kalman}} Filter for Neural Network-Based
  One-Shot Inversion}, pages 393--418. De Gruyter, 2022.

\bibitem{hanuSubsamplingEnsembleKalman2023}
Matei Hanu, Jonas Latz, and Claudia Schillings.
\newblock Subsampling in ensemble {{Kalman}} inversion.
\newblock {\em Inverse Problems}, 39(9):094002, 2023.

\bibitem{hanuEnsembleKalmanInversion2024}
Matei Hanu and Simon Weissmann.
\newblock On the ensemble {{Kalman}} inversion under inequality constraints.
\newblock {\em Inverse Problems}, 2024.

\bibitem{hertyKineticMethodsInverse2019}
Michael Herty and Giuseppe Visconti.
\newblock Kinetic {{Methods}} for {{Inverse Problems}}, 2019.

\bibitem{hodyssEnsembleStateEstimation2011}
Daniel Hodyss.
\newblock Ensemble {{State Estimation}} for {{Nonlinear Systems Using
  Polynomial Expansions}} in the {{Innovation}}.
\newblock {\em Monthly Weather Review}, 139(11):3571--3588, 2011.

\bibitem{HornJohnson2012MatrixAnalysis}
Roger~A Horn and Charles~R Johnson.
\newblock {\em Matrix analysis}.
\newblock Cambridge University Press, 2012.

\bibitem{huang2022EfficientDerivativefreeBayesian}
Daniel~Zhengyu Huang, Jiaoyang Huang, Sebastian Reich, and Andrew~M Stuart.
\newblock Efficient derivative-free {{Bayesian}} inference for large-scale
  inverse problems.
\newblock {\em Inverse Problems}, 38(12):125006, December 2022.

\bibitem{iglesiasEnsembleKalmanInversion2022}
Marco Iglesias, Deirdre~M. McGrath, M.~V. Tretyakov, and Susan~T. Francis.
\newblock Ensemble {{Kalman}} inversion for magnetic resonance elastography.
\newblock {\em Physics in Medicine \& Biology}, 67(23):235003, 2022.

\bibitem{iglesias2013ensemble}
Marco~A. Iglesias, Kody J.~H. Law, and Andrew~M. Stuart.
\newblock Ensemble {K}alman methods for inverse problems.
\newblock {\em Inverse Problems}, 29:045001, 2013.

\bibitem{iglesias2014well}
Marco~A Iglesias, Kui Lin, and Andrew~M Stuart.
\newblock Well-posed {B}ayesian geometric inverse problems arising in
  subsurface flow.
\newblock {\em Inverse Problems}, 30(11):114001, 2014.

\bibitem{kovachkiEnsembleKalmanInversion2019}
Nikola~B Kovachki and Andrew~M Stuart.
\newblock Ensemble {{Kalman}} inversion: A derivative-free technique for
  machine learning tasks.
\newblock {\em Inverse Problems}, 35(9):095005, 2019.

\bibitem{li2009IterativeEnsembleKalman}
Gaoming Li and Albert~C. Reynolds.
\newblock Iterative {{Ensemble Kalman Filters}} for {{Data Assimilation}}.
\newblock {\em SPE Journal}, 14(03):496--505, September 2009.

\bibitem{liu2023dropout}
Shuigen Liu, Sebastian Reich, and Xin~T Tong.
\newblock Dropout ensemble {K}alman inversion for high dimensional inverse
  problems.
\newblock {\em SIAM Journal on Numerical Analysis}, 63(2):685--715, 2025.

\bibitem{lopez-gomezTrainingPhysicsBasedMachineLearning2022}
Ignacio {Lopez-Gomez}, Costa Christopoulos, Haakon~Ludvig Langeland~Ervik,
  Oliver R.~A. Dunbar, Yair Cohen, and Tapio Schneider.
\newblock Training {{Physics-Based Machine-Learning Parameterizations With
  Gradient-Free Ensemble Kalman Methods}}.
\newblock {\em Journal of Advances in Modeling Earth Systems},
  14(8):e2022MS003105, 2022.

\bibitem{majumder2021freshwater}
Sudip Majumder, Renato~M Castelao, and Caitlin~M Amos.
\newblock Freshwater variability and transport in the labrador sea from in situ
  and satellite observations.
\newblock {\em Journal of Geophysical Research: Oceans}, 126(4):e2020JC016751,
  2021.

\bibitem{nguyen2012spatial}
Hai Nguyen, Noel Cressie, and Amy Braverman.
\newblock Spatial statistical data fusion for remote sensing applications.
\newblock {\em Journal of the American Statistical Association},
  107(499):1004--1018, 2012.

\bibitem{palanthandalam2008wind}
Harish~J Palanthandalam-Madapusi, Anouck Girard, and Dennis~S Bernstein.
\newblock Wind-field reconstruction using flight data.
\newblock In {\em 2008 American Control Conference}, pages 1863--1868. IEEE,
  2008.

\bibitem{panteleev2015adjoint}
Gleb Panteleev, Max Yaremchuk, and W~Erick Rogers.
\newblock Adjoint-free variational data assimilation into a regional wave
  model.
\newblock {\em Journal of Atmospheric and Oceanic Technology},
  32(7):1386--1399, 2015.

\bibitem{parzerConvergenceRatesAdaptive2022}
Fabian Parzer and Otmar Scherzer.
\newblock On convergence rates of adaptive ensemble {{Kalman}} inversion for
  linear ill-posed problems.
\newblock {\em Numerische Mathematik}, 152(2):371--409, 2022.

\bibitem{pensoneaultEnsembleKalmanInversion2023}
Andrew Pensoneault, Witold~F. Krajewski, Nicol{\'a}s Vel{\'a}squez, Xueyu Zhu,
  and Ricardo Mantilla.
\newblock Ensemble {{Kalman Inversion}} for upstream parameter estimation and
  indirect streamflow correction: {{A}} simulation study.
\newblock {\em Advances in Water Resources}, 181:104545, 2023.

\bibitem{pidstrigach2023AffineInvariantEnsembleTransform}
Jakiw Pidstrigach and Sebastian Reich.
\newblock Affine-{{Invariant Ensemble Transform Methods}} for {{Logistic
  Regression}}.
\newblock {\em Foundations of Computational Mathematics}, 23(2):675--708, April
  2023.

\bibitem{pulido2018StochasticParameterizationIdentification}
Manuel Pulido, Pierre Tandeo, Marc Bocquet, Alberto Carrassi, and Magdalena
  Lucini.
\newblock Stochastic parameterization identification using ensemble {{Kalman}}
  filtering combined with maximum likelihood methods.
\newblock {\em Tellus A: Dynamic Meteorology and Oceanography}, 70(1):1--17,
  January 2018.

\bibitem{reich2015probabilistic}
Sebastian Reich and Colin Cotter.
\newblock {\em Probabilistic forecasting and {Bayesian} data assimilation}.
\newblock Cambridge University Press, 2015.

\bibitem{sakov2012iterative}
Pavel Sakov, Dean~S Oliver, and Laurent Bertino.
\newblock An iterative {EnKF} for strongly nonlinear systems.
\newblock {\em Monthly Weather Review}, 140(6):1988--2004, 2012.

\bibitem{scherzer2006mathematical}
Otmar Scherzer.
\newblock {\em Mathematical models for registration and applications to medical
  imaging}, volume~10.
\newblock Springer Science \& Business Media, 2006.

\bibitem{schillings2017analysis}
Claudia Schillings and Andrew~M. Stuart.
\newblock Analysis of the ensemble {K}alman filter for inverse problems.
\newblock {\em SIAM Journal on Numerical Analysis}, 55(3):1264--1290, 2017.

\bibitem{schillings2018convergence}
Claudia Schillings and Andrew~M Stuart.
\newblock Convergence analysis of ensemble {K}alman inversion: the linear,
  noisy case.
\newblock {\em Applicable Analysis}, 97(1):107--123, 2018.

\bibitem{schneider2017EarthSystemModeling}
Tapio Schneider, Shiwei Lan, Andrew Stuart, and Jo{\~a}o Teixeira.
\newblock Earth {{System Modeling}} 2.0: {{A Blueprint}} for {{Models That
  Learn From Observations}} and {{Targeted High-Resolution Simulations}}.
\newblock {\em Geophysical Research Letters}, 44(24):12,396--12,417, 2017.

\bibitem{schneiderLearningStochasticClosures2021}
Tapio Schneider, Andrew~M Stuart, and Jin-Long Wu.
\newblock Learning stochastic closures using ensemble {{Kalman}} inversion.
\newblock {\em Transactions of Mathematics and Its Applications}, 5(1):tnab003,
  2021.

\bibitem{schneiderEnsembleKalmanInversion2022}
Tapio Schneider, Andrew~M. Stuart, and Jin-Long Wu.
\newblock Ensemble {{Kalman}} inversion for sparse learning of dynamical
  systems from time-averaged data.
\newblock {\em Journal of Computational Physics}, 470:111559, 2022.

\bibitem{snyder2022optimal}
Chris Snyder and Gregory~J Hakim.
\newblock An optimal linear transformation for data assimilation.
\newblock {\em Journal of Advances in Modeling Earth Systems},
  14(6):e2021MS002937, 2022.

\bibitem{strang1993fundamental}
Gilbert Strang.
\newblock The fundamental theorem of linear algebra.
\newblock {\em The American Mathematical Monthly}, 100(9):848--855, 1993.

\bibitem{tong2023localized}
XT~Tong and M~Morzfeld.
\newblock Localized ensemble {K}alman inversion.
\newblock {\em Inverse Problems}, 39(6):064002, 2023.

\bibitem{tsoEnsembleKalmanInversion2024}
Chak-Hau~Michael Tso, Marco Iglesias, and Andrew Binley.
\newblock Ensemble {{Kalman}} inversion of induced polarization data.
\newblock {\em Geophysical Journal International}, 236(3):1877--1900, 2024.

\bibitem{twomey2013introduction}
Sean Twomey.
\newblock {\em Introduction to the mathematics of inversion in remote sensing
  and indirect measurements}.
\newblock Elsevier, 2013.

\bibitem{van2020consistent}
Peter~Jan Van~Leeuwen.
\newblock A consistent interpretation of the stochastic version of the
  {E}nsemble {K}alman {F}ilter.
\newblock {\em Quarterly Journal of the Royal Meteorological Society},
  146(731):2815--2825, 2020.

\bibitem{vetra-carvalhoStateoftheartStochasticData2018}
Sanita {Vetra-Carvalho}, Peter Jan~Van Leeuwen, Lars Nerger, Alexander Barth,
  M.~Umer Altaf, Pierre Brasseur, Paul Kirchgessner, and Jean-Marie Beckers.
\newblock State-of-the-art stochastic data assimilation methods for
  high-dimensional non-{{Gaussian}} problems.
\newblock {\em Tellus A: Dynamic Meteorology and Oceanography}, 70(1), 2018.

\bibitem{wagnerEnsembleKalmanFilter2022}
Fabian Wagner, I.~Papaioannou, and E.~Ullmann.
\newblock The {{Ensemble Kalman Filter}} for {{Rare Event Estimation}}.
\newblock {\em SIAM/ASA Journal on Uncertainty Quantification}, 10(1):317--349,
  2022.

\bibitem{weissmann2022gradient}
Simon Weissmann.
\newblock Gradient flow structure and convergence analysis of the ensemble
  {K}alman inversion for nonlinear forward models.
\newblock {\em Inverse Problems}, 38(10):105011, 2022.

\bibitem{weissmannAdaptiveTikhonovStrategies2022}
Simon Weissmann, Neil~K Chada, Claudia Schillings, and Xin~T Tong.
\newblock Adaptive {{Tikhonov}} strategies for stochastic ensemble {{Kalman}}
  inversion.
\newblock {\em Inverse Problems}, 38(4):045009, 2022.

\bibitem{weissmannEnsembleKalmanFilter2024}
Simon Weissmann, Neil~K Chada, and Xin~T Tong.
\newblock The ensemble {K}alman filter for dynamic inverse problems.
\newblock {\em Information and Inference: A Journal of the IMA}, 13(4):iaae030,
  2024.

\bibitem{weissmann2022multilevel}
Simon Weissmann, Ashia Wilson, and Jakob Zech.
\newblock Multilevel optimization for inverse problems.
\newblock In Po-Ling Loh and Maxim Raginsky, editors, {\em Proceedings of
  Thirty Fifth Conference on Learning Theory}, volume 178, pages 5489--5524.
  PMLR, 02--05 Jul 2022.

\bibitem{young1991EulerConstant}
Robert~M Young.
\newblock Euler’s constant.
\newblock {\em The Mathematical Gazette}, 75(472):187--190, 1991.

\bibitem{zhangParallelEnsembleKalman2024}
Xin-Lei Zhang, Lei Zhang, and Guowei He.
\newblock Parallel ensemble {{Kalman}} method with total variation
  regularization for large-scale field inversion.
\newblock {\em Journal of Computational Physics}, 509:113059, 2024.

\bibitem{zhang2010ensemble}
Yanfen Zhang, Ning Liu, and Dean~S Oliver.
\newblock Ensemble filter methods with perturbed observations applied to
  nonlinear problems.
\newblock {\em Computational Geosciences}, 14:249--261, 2010.

\end{thebibliography}
\end{document}